\newtheorem{lemma}{Lemma}[section]
\newtheorem{proposition}{Proposition}[section]
\newtheorem{theorem}{Theorem}[section]
\newtheorem{rmk}{Remark}[section]
\newtheorem{cor}{Corollary}[section]
\newcommand{\R}{\mathbb R}
\newcommand{\cc}{\mathbb C}
\begin{document}

\title[On near-cloaking for linear elasticity]{On near-cloaking for linear elasticity}

\bibliographystyle{alpha}

\author{R. Craster}
\address{R.C.: Department of Mathematics, Imperial College London, London, SW7 2AZ, United Kingdom; 
CNRS-Imperial ``Abraham de Moivre'' Unit\'e Mixte Internationale.}
\email{r.craster@imperial.ac.uk}

\author{A. Diatta}
\address{A.D.: Aix-Marseille Universite, CNRS, Centrale Marseille, Institut Fresnel, Avenue Escadrille Normandie-Ni\'emen, 13013, Marseille, France.}
\email{andre.diatta@fresnel.fr}

\author{S. Guenneau}
\address{S.G.: Aix-Marseille Universite, CNRS, Centrale Marseille, Institut Fresnel, Avenue Escadrille Normandie-Ni\'emen, 13013, Marseille, France.}
\email{sebastien.guenneau@fresnel.fr}

\author{H. Hutridurga}
\address{H.H.: Indian Institute of Technology Bombay, Department of Mathematics, Mumbai 400076, India.}
\email{hutri@math.iitb.ac.in}

\date{\today}

\maketitle

\setcounter{tocdepth}{1}
\tableofcontents

\thispagestyle{empty}

\begin{abstract}
We make precise some results on the cloaking of displacement fields in linear elasticity. In the spirit of transformation media theory, the transformed governing equations in Cosserat and Willis frameworks are shown to be equivalent to certain high contrast small defect problems for the usual Navier equations. We discuss near-cloaking for elasticity systems via a regularized transform and perform numerical experiments to illustrate our near-cloaking results. We also study the sharpness of the estimates from [H. Ammari, H. Kang, K. Kim and H. Lee, J. Diff. Eq. 254,  4446-4464 (2013)], wherein the convergence of the solutions to the transmission problems is investigated, when the Lam\'e parameters in the inclusion tend to extreme values. Both soft and hard inclusion limits are studied and we also touch upon the finite frequency case. Finally, we propose an approximate isotropic cloak algorithm for a symmetrized Cosserat cloak.
\end{abstract}

\section{Introduction}
Following the proposal of an electromagnetic invisibility cloak via a geometric transform \cite{pendry, leonhardt}, and inspired by an extension to other equations of physics \cite{Milton_2006}, a number of papers have appeared on possible routes to elastodynamic cloaking \cite{brun,Norris_2011,Norris_2012,diatta-guenneau-sphericalcloak,colquitt2013}. Potential applications of transformation elastodynamics are in the protection of specific regions from mechanical vibrations ranging from ultrasonics \cite{colombi2017} to civil engineering \cite{brule-enoch-guenneau-prl2014} and geophysics \cite{colombi,roux2018}.

The theory of cloaking has an intricate connection to the theory of inverse problems in the context of partial differential equations (PDEs). Loosely speaking, a successful cloak should make the reconstruction of material properties based on boundary measurements impossible. Greenleaf and coauthors have shown in \cite{greenleaf} that two different conductivity distributions can lead to the same boundary voltage measurements. This result was achieved using a singular geometric transform, which blows up a point onto a ball. Incidentally, the transformation appearing in \cite{pendry} coincides with that of \cite{greenleaf}, although the latter work did not make connection to invisibility cloaks.

Pendry's construction of a perfect cloak in \cite{pendry} essentially exploits the transformation invariance property of the Maxwell system \cite{Dolin1961, Nicolet_1994, Ward_1996}. This invariance property is shared by electrostatic, acoustic and thermostatic problems. Unfortunately, elastostatic systems do not retain their form under change-of-variables \cite{Milton_2006}. It should be noted that the geometric transform used in \cite{pendry} is singular at the inner boundary of the cloak. This leads to technical difficulties in the mathematical analysis of transformed models. This was remedied in \cite{Kohn_2008} by the introduction of a regularized geometric transform, which blows up a small ball onto the cloaked region. Here, the authors introduced the notion of near-cloaking, as opposed to perfect cloaking of \cite{pendry}.  An important observation in \cite{Kohn_2008} is that the concept of near-cloaking is related to the study of certain boundary value problems with small volume inhomogeneities. The study of PDEs in the presence of small volume inhomogeneities is a well-established topic of interest amongst analysts. One can consult \cite{Friedman_1989} for the conductivity problem, \cite{kohn2010} for the Helmholtz problem, \cite{Ammari_2013} and \cite[chapter 2]{Ammari_elasticity_book} for linear elasticity systems. We also refer the reader to the review paper \cite{greenleaf2009} on cloaking by change-of-variable based methods.

In recent years, there has been considerable interest in the applied mathematics literature on the cloaking of elastic displacement fields. For instance, in \cite{hu2015}, the authors develop a regularized approximate cloak involving a lossy layer for the Lam\'e system and they derive sharp asymptotic estimates to assess the cloaking performance at finite frequencies. This approach borrows ideas from \cite{kohn2010} where a similar construction was proposed to cloak acoustic waves at finite frequencies.  In another interesting approach \cite{abbas2017}, the authors make use of layer potential techniques and transformation matrix (so-called T-matrix) to achieve an enhancement of near cloaking of elastic waves in two-dimensions, in the low-frequency regime. A drawback in adapting the strategy in \cite{hu2015} to the static case is that estimates therein blowup in the zero-frequency limit, whereas the strategy developed in \cite{abbas2017} through the construction of the so-called S-vanishing structures depends upon the Lam\'e parameters of the inclusion, see \cite[section 5]{abbas2017} for further details. Also note that there has been a recent near-cloaking theory at finite frequencies for the Cosserat cloak, but with a lossy layer \cite{lin2018}, and estimates therein do not encompass the static case.

Here, we retain the classical transformation based cloaking strategy as proposed in \cite{Milton_2006,diatta-guenneau-impedancecloak}: our inspiration comes from the work of \cite{Kohn_2008} in electrostatics. There Kohn and co-authors linked the transformation based cloaking to the study of a Poisson problem with a small volume defect, then applied the result of Friedman and Vogelius \cite{Friedman_1989}, which established the difference between the solutions to the homogeneous and the perturbed problems to be a monotone decreasing function of the size of the defect, irrespective of the contrast in conductivity. Hence, the work of Ammari and coauthors \cite{Ammari_2013} on the study of transmission problems in linear elasticity systems plays a prominent role in our near cloaking approach. Nonetheless, their asymptotic results are valid subject to a certain upper bound on the bulk modulus of the inclusion. A straightforward application of their result to deduce a near-cloaking result would involve imposition of similar conditions on the bulk modulus that could be cloaked; this is in contrast to the conductivity case. Hence we improve the result of \cite{Ammari_2013} by lifting the boundedness assumption on the bulk modulus of the inclusion, thus proving a near-cloaking result irrespective of the material properties in the cloaked region.

This paper is organised as follows: section \ref{sec:math-setting} introduces the mathematical model, makes precise our notion of near-cloaking, gathers some basic qualitative properties of elasticity tensors and details transmission problems in high-contrast elastic media. Transformation based cloaking in linear elasticity is discussed in section \ref{sec:transformation-elasticity}, where we treat both the Willis and Cosserat settings. Section \ref{sec:near-cloaking-results} deals with the near-cloaking results. Our main results are Theorem \ref{thm:near-cloak-2D-cosserat} in the Cosserat case and Theorem \ref{thm:near-cloak-2D-3D-willis} in the Willis case. In section \ref{sec:symmetrize}, we propose a symmetrization procedure for the Cosserat cloak followed by a construction of an isotropic approximation to that symmetrized tensor using homogenization techniques. Finally, in section \ref{sec:numerics} we present our numerical results in two dimensions, where we illustrate near-cloaking in Cosserat setting and the asymptotic behaviour of solutions to the transmission problems in both the soft and hard inclusion limits. 

\section{Mathematical setting}\label{sec:math-setting}
Throughout this work, $d$ stands for the dimension of the Euclidean space $\R^d$. In a linear elastic material with fourth order constitutive tensor $\cc$, occupying a bounded Lipschitz domain $\Omega\subset \R^d$ ($d=2,3$), the partial differential equation in \emph{elastostatics} for the unknown displacement field $\mathbf{u}:\Omega\to\R^d$ reads
\begin{equation}\label{eq:navier-general}
\left\{
\begin{aligned}
{\mathbf \nabla}\cdot \big(\cc: \nabla {\mathbf u} \big) & = \mathbf{0} \qquad \mbox{ in }\Omega,
\\
\big(\cc: \nabla {\mathbf u}\big)\mathbf{n} & = \mathbf{g} \qquad \mbox{ on }\partial \Omega,
\end{aligned}\right.
\end{equation}
where ${\big(\cc: \nabla {\mathbf u} \big)}_{ij}:=\sum_{k,l=1}^d\cc_{ijkl}{\big(\nabla {\mathbf u} \big)}_{kl}$ and the traction (Neumann) datum
$\mathbf{g}\in[\mathrm H^{-\frac{1}{2}}(\partial\Omega)]^d$ satisfies the compatibility condition
\[
\int_{\partial\Omega} \mathbf{g}(x) \, {\mathrm d}\sigma(x) = \mathbf{0}.
\]  
Here $\mathbf{n}(x)$ denotes the outward unit normal to $\partial\Omega$ at point $x=(x_1,\dots,x_d)$. We assume that the elasticity tensor $\mathbb{C}=\left(C_{ijkl}\right)$ in \eqref{eq:navier-general} satisfies the ellipticity condition, i.e., there exists a constant $\mathfrak{a}>0$ such that
\begin{align}\label{eq:ellipticity-condition}
(\mathbb{C}:A):A = \sum_{i,j,k,l=1}^d C_{ijkl} A_{ij} A_{kl} \ge \mathfrak{a} \sum_{i,j=1}^d \left\vert A_{ij} \right\vert^2 =: \mathfrak{a} \left\Vert A \right\Vert^2,
\end{align}
for any symmetric second order tensor $A=\big(A_{ij}\big)$. It is a classical matter in this setting to show that for any given $\mathbf{g}\in[\mathrm H^{-\frac{1}{2}}(\partial\Omega)]^d$, there exists a unique solution ${\mathbf u}\in [\mathrm H^1(\Omega)]^d$ to the system \eqref{eq:navier-general} -- we refer the reader to, e.g., \cite{Ciarlet_1988}. Note that the solution $\mathbf{u}(x)$ is unique up to addition of constants. Hence, we can obtain uniqueness by imposing a normalisation
\[
\int_\Omega {\mathbf u}(x)\, {\mathrm d}x = \mathbf{0}.
\]
As the partial differential equation \eqref{eq:navier-general} is well-posed, the following Neumann-to-Dirichlet map (NtD for short) is well-defined:
\begin{align*}
\Lambda_{\mathbb{C}} : [\mathrm H^{-\frac{1}{2}}(\partial\Omega)]^d & \to [\mathrm H^{\frac{1}{2}}(\partial\Omega)]^d
\\
\mathbf{g} & \mapsto {\mathbf u}|_{\partial\Omega},
\end{align*}
where $\mathbf{u}(x)$ is the unique solution to \eqref{eq:navier-general} corresponding to the traction boundary datum $\mathbf{g}(x)$. There also exists a variational characterisation of the displacement field ${\mathbf u}(x)$ which solves the elastostatic problem \eqref{eq:navier-general}:
\begin{align}\label{eq:variational-form}
\min_{{\mathbf v}\in\mathcal{V}^\mathbf{g}_{\cc}} \int_\Omega \big( \cc:{\mathbf e}({\mathbf v}) \big) : {\mathbf e}({\mathbf v}) \, {\mathrm d}x,
\end{align}
with ${\mathbf e}({\mathbf v})=\frac12\left( \nabla {\mathbf v} + \nabla {\mathbf v}\!^\top \right)$, the symmetrized gradient, where $B\!^\top$ stands for the transpose of $B$. The admissible function space $\mathcal{V}^\mathbf{g}_{\cc}$ in \eqref{eq:variational-form} is defined below
\begin{align}\label{eq:variational-form-admissible-space}
\mathcal{V}^\mathbf{g}_{\cc} := \left\{ {\mathbf w}\in [\mathrm H^1(\Omega)]^d \mbox{ such that } \big( \cc : {\mathbf e}({\mathbf w}) \big) {\mathbf n} = \mathbf{g}  \mbox{ on }\partial\Omega \right\}.
\end{align}
The constitutive tensor $\cc$ of a typical elastic material would satisfy the following symmetries:
\begin{align}\label{toto}
\mbox{major symmetry: }C_{ijkl} = C_{klij},
\qquad
\mbox{ minor symmetries: }C_{ijkl} = C_{jikl} = C_{ijlk},
\end{align}
for $i,j,k,l=1,..., d$. 
However, as shown in \cite{brun, diatta-guenneau-sphericalcloak}, transformation elasticity may result in exotic materials with anisotropic inhomogeneous elasticity tensors without the minor symmetries (see section \ref{sec:transformation-elasticity} for details). Such materials also called Cosserat materials by the authors \cite{brun, diatta-guenneau-sphericalcloak}, as a generalization of the known Cosserat materials, still have isotropic but inhomogeneous mass densities. In an endeavor to enforce the symmetry of all tensors, Milton et al. \cite{Milton_2006} propose the so-called Willis equation that produce materials (Willis materials) with fully symmetric elasticity tensor. However, such Willis materials possess additional 3-tensors and display anisotropic inhomogeneous mass densities.
In isotropic homogeneous elastic media, the elasticity tensor $\cc=\left(C_{ijkl}\right)$ is fully symmetric. More precisely, its components read
\begin{align}\label{eq:isotropic-elasticity-tensor}
C_{ijkl} = \lambda\, \delta_{ij}\delta_{kl} + \mu \left( \delta_{ik} \delta_{jl} + \delta_{il} \delta_{jk} \right),
\end{align}
where the standard Kr\"onecker delta notation $\delta_{ij}$ is used  and it satisfies Hooke's law: 
\begin{align}\label{eq:hooke}
\sigma = \cc:  \epsilon \;,
\end{align} 
where $\sigma$ and $\epsilon$ are the stress and the strain tensors, respectively. The constants $\lambda$ and $\mu$ in \eqref{eq:isotropic-elasticity-tensor} are called the bulk modulus (first Lam\'e parameter) and the shear modulus respectively. The ellipticity condition \eqref{eq:ellipticity-condition} for the isotropic elasticity tensor \eqref{eq:isotropic-elasticity-tensor} translates as
\begin{align}\label{eq:strong-convexity-Lame-parameters}
\mu > 0, \qquad d\lambda + 2 \mu >0.
\end{align}
The parameters $\lambda$, $\mu$ help us define the Poisson ratio $\nu$, the Bulk modulus $\kappa$ and the Young's modulus $E$ as
\begin{align}\label{eq:poisson-bullk-young-modulii-defn}
\nu := \frac{\lambda}{(d-1)\lambda+2\mu},
\qquad
\kappa := \frac{d\lambda + 2\mu}{d},
\qquad
E := \frac{2\mu(d\lambda+2\mu)}{(d-1)\lambda+2\mu}.
\end{align}
We note that these assumptions ensure that $0<\nu<0.5$, but within the theory of composites it has been shown that $\nu$ can get close to $-1$ \cite{milton1992}. In isotropic setting, the elasticity system \eqref{eq:navier-general} takes the form
\begin{equation}\label{eq:navier-isotropic}
\left\{
\begin{aligned}
\mu\, \Delta {\mathbf u} + (\lambda+\mu)\nabla (\nabla .{\mathbf u}) & = \mathbf{0} \qquad \mbox{ in }\Omega,
\\
\lambda\, (\nabla .{\mathbf u}) {\mathbf n} + \mu\, {\mathbf e}({\mathbf u}) {\mathbf n} & = \mathbf{g} \qquad \mbox{ on }\partial \Omega.
\end{aligned}\right.
\end{equation}
We further note that well posedness of (\ref{eq:navier-general}) holds even for tensors with minor symmetry breaking in the elasticity tensor, see e.g. \cite{Dyszlewicz_2004}.

\subsection{Notion of near-cloaking}
Here we make precise the notion of near-cloaking for elastic systems. Given a region $\mathcal{Q}$ strictly contained in $\Omega$, the goal is to surround $\mathcal{Q}$ by an elastic material such that any linear elastic object (inclusion) in $\mathcal{Q}$ goes nearly undetected by measurements on the boundary of $\Omega$. More precisely, let the background be occupied by a homogeneous isotropic material with Lam\'e parameters $\lambda_0$, $\mu_0$ satisfying \eqref{eq:strong-convexity-Lame-parameters}. For any given positive $\delta$, the aforementioned construction of the surrounding elastic material (possibly depending on $\delta$) should be such that the associated displacement field is $\delta$-close (in some chosen topology) to that of the homogeneous isotropic material (with the Lam\'e parameters $\lambda_0$, $\mu_0$) as measured at the boundary of $\Omega$, irrespective of the elastic property of the inclusion in $\mathcal{Q}$. Throughout, we use the notation $B_r$ to denote a Euclidean ball of radius $r=\sqrt{x_1^2+\dots+x_d^2}$ centred at the origin. To fix ideas, we take the region $\mathcal{Q}$ to be $B_1$ and the surrounding region to be the annulus $B_2\setminus B_1$. 
Given a number $0<\delta<1$, our objective is to give a recipe for constructing a fourth order tensor $\cc^\delta_{\mathrm{cl}}(x)$ with the structure
\begin{equation}\label{eq:goal:cloak-defn}
\cc^\delta_{\mathrm{cl}}(x) =
\left\{
\begin{array}{ll}
\cc_0 & \quad \mbox{ for }x\in\Omega\setminus B_2,\\[0.2 cm]
\mathbb{D}^\delta & \quad \mbox{ for }x\in B_2\setminus B_1,\\[0.2 cm]
\cc_1 & \quad \mbox{ for }x\in B_1,
\end{array}\right.
\end{equation}
where $\cc_0$ and $\cc_1$ are isotropic fourth order tensors of the form \eqref{eq:isotropic-elasticity-tensor} with Lam\'e parameters $\lambda_0,\mu_0$ and $\lambda_1, \mu_1$ respectively (both pairs satisfying the strong convexity condition \eqref{eq:strong-convexity-Lame-parameters}). The above construction should be such that the solution $\mathbf{u}_\mathrm{cl}^\delta$ to \eqref{eq:navier-general} with $\cc_\mathrm{cl}^\delta$ as the elasticity tensor satisfies
\begin{align}\label{eq:goal:distance-H12-norm}
\left\Vert \mathbf{u}^\delta_\mathrm{cl} - \mathbf{u}_\mathrm{hom} \right\Vert_{[\mathrm{H}^\frac{1}{2}(\partial\Omega)]^d} \lesssim h(\delta),
\end{align}
where $\lim_{\delta\to 0}h(\delta)=0$ and $\mathbf{u}_\mathrm{hom}(x)$ is the solution to the isotropic system \eqref{eq:navier-isotropic} with Lam\'e parameters $\lambda_0$, $\mu_0$. Furthermore, the closeness \eqref{eq:goal:distance-H12-norm} should hold irrespective of the values of the Lam\'e parameters $\lambda_1$, $\mu_1$ in $B_1$ as long as they satisfy the convexity condition \eqref{eq:strong-convexity-Lame-parameters}.

\subsection{Ordering property of elasticity tensors}
In this subsection, we recall and prove some qualitative and quantitative results on elasticity systems \cite{diatta-guenneau-impedancecloak}.
We say that two fourth order tensors $\mathbb{C}_1$ and $\mathbb{C}_2$ are ordered if we have
\begin{align}\label{eq:C-1_C-2_ordered-defn}
\big(\mathbb{C}_1:A\big):A \le \big(\mathbb{C}_2:A\big):A ,
\end{align}
for any arbitrary symmetric second order tensor $A$. If $\cc_1$ and $\cc_2$ are isotropic homogeneous tensors, then there is a characterisation of the aforementioned ordering property in terms of their Lam\'e parameters. This is the purpose of the next result. 
\begin{lemma}\label{lem:second-inequality-tensors}
Let $\cc_1$ and $\cc_2$ be two isotropic homogeneous fourth order tensors with Lam\'e coefficients $\lambda_i$ and $\mu_i,$ $i=1,2,$ respectively. Then $\cc_1$ and $\cc_2$ are ordered as in \eqref{eq:C-1_C-2_ordered-defn} if and only if $d\lambda_1+2\mu_1 \leq d\lambda_2+2\mu_2$ and  $\mu_1\leq\mu_2.$
\end{lemma}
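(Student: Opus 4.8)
The plan is to reduce the tensorial ordering \eqref{eq:C-1_C-2_ordered-defn} to two scalar inequalities using the explicit form \eqref{eq:isotropic-elasticity-tensor}. First I would record the elementary identity that, for any symmetric second order tensor $A$ and any isotropic homogeneous tensor $\cc$ with Lamé parameters $\lambda,\mu$,
\[
(\cc:A):A = \lambda\,(\operatorname{tr}A)^2 + 2\mu\,\|A\|^2,
\]
obtained by contracting \eqref{eq:isotropic-elasticity-tensor} against $A$ twice and using the symmetry $A_{ij}=A_{ji}$. Consequently, $\cc_1$ and $\cc_2$ are ordered as in \eqref{eq:C-1_C-2_ordered-defn} if and only if
\[
(\lambda_1-\lambda_2)\,(\operatorname{tr}A)^2 + 2(\mu_1-\mu_2)\,\|A\|^2 \le 0
\]
for every symmetric $A$.

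Next I would decompose $A$ into its spherical and deviatoric parts, $A = \tfrac1d(\operatorname{tr}A)\,I + A^{\mathrm{dev}}$ with $\operatorname{tr}A^{\mathrm{dev}}=0$; since the cross term $I:A^{\mathrm{dev}} = \operatorname{tr}A^{\mathrm{dev}}$ vanishes and $\|I\|^2=d$, one gets $\|A\|^2 = \tfrac1d(\operatorname{tr}A)^2 + \|A^{\mathrm{dev}}\|^2$. Substituting, the quadratic form above rewrites as
\[
\frac1d\big[(d\lambda_1+2\mu_1)-(d\lambda_2+2\mu_2)\big](\operatorname{tr}A)^2 + 2(\mu_1-\mu_2)\,\|A^{\mathrm{dev}}\|^2 .
\]
To conclude, I would observe that the two non-negative quantities $(\operatorname{tr}A)^2$ and $\|A^{\mathrm{dev}}\|^2$ can be prescribed independently: testing against $A=I$ (for which $A^{\mathrm{dev}}=0$) forces $(d\lambda_1+2\mu_1)-(d\lambda_2+2\mu_2)\le 0$, while testing against a nonzero traceless symmetric tensor such as $A=\operatorname{diag}(1,-1,0,\dots,0)$ — available since $d\ge 2$ — forces $\mu_1-\mu_2\le 0$; conversely, if both coefficients are non-positive the displayed expression is $\le 0$ term by term for every symmetric $A$. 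This establishes the equivalence in both directions.

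There is no genuine obstacle here; the only two points that deserve a line of justification are the vanishing of the cross term in the spherical/deviatoric splitting (so that $\|A\|^2$ decomposes additively into the two pieces carrying the combinations $d\lambda+2\mu$ and $\mu$) and the independence of $(\operatorname{tr}A)^2$ and $\|A^{\mathrm{dev}}\|^2$, which is precisely what turns the single condition ``for all symmetric $A$'' into the two separate inequalities on the Lamé parameters.
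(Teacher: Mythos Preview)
Your proof is correct and follows essentially the same route as the paper's: both decompose the quadratic form $(\cc:A):A$ into a trace piece carrying the coefficient $d\lambda+2\mu$ and a traceless piece carrying $2\mu$, then test with the identity and a traceless symmetric tensor to extract the two scalar inequalities. The only cosmetic differences are that the paper writes the traceless part out in coordinates (as $4\mu\sum_{i<j}A_{ij}^2 + \tfrac{2\mu}{d}\sum_{i<j}(A_{ii}-A_{jj})^2$, which is exactly your $2\mu\|A^{\mathrm{dev}}\|^2$) and, for the necessity direction, tests with an off-diagonal symmetric tensor rather than your traceless diagonal one.
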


\proof  For a symmetric tensor $A$ of order 2, we have
\begin{align}\label{eq:ordered-proof-intermediate}
(\cc_1:A ):A =\frac{1}{d} (d\lambda_1+2\mu_1)\displaystyle\left(\sum_{i=1}^d A_{ii}\right)^2 + 4\mu_1\displaystyle\sum_{1\le i<j\le d}A_{ij}^2+ \frac{2\mu_1}{d}\displaystyle\sum_{1\le i<j\le d}\left(A_{ii}-A_{jj}\right)^2.
\end{align}
A similar expression holds for $(\cc_2:A):A$. Hence we have
\begin{eqnarray}
(\cc_2:A ):A -(\cc_1:A ):A &=& \frac{1}{d} \Big(d\lambda_2+2\mu_2-(d\lambda_1+2\mu_1)\Big)\displaystyle\left(\displaystyle\sum_{i=1}^d A_{ii}\right)^2 \nonumber \\
&+& 4(\mu_2-\mu_1)\displaystyle \sum_{1\leq i <j\leq d} A_{ij}^2+ \frac{2}{d}(\mu_2-\mu_1)\displaystyle \sum_{1\leq i <j\leq d} \left(A_{ii}-A_{jj}\right)^2. \nonumber
\end{eqnarray}
By inspection, it follows that the conditions $d\lambda_1+2\mu_1 \leq d\lambda_2+2\mu_2$ and $\mu_1\leq\mu_2$ suffice to guarantee that $\cc_1$ and $\cc_2$ are ordered. Now, in order to show that they are indeed necessary, let us suppose that $\mathbb C_1$ and $\mathbb C_2$ are ordered as in \eqref{eq:C-1_C-2_ordered-defn} and further choose $A$ to be the second order identity tensor which yields $d\lambda_1+2\mu_1 \leq d\lambda_2+2\mu_2$. Next, by taking $A$ to be a symmetric second order tensor with vanishing diagonal elements, we obtain $\mu_1\leq\mu_2$.
\qed

The result we state next gives precise lower and upper bounds on the fourth order isotropic homogeneous tensors in terms of their Lam\'e coefficients. This observation will be useful while performing the soft and hard inclusion asymptotics in the transmission problems addressed later on in this paper.
\begin{lemma}\label{lem:lower-upper-bound-isotropic-tensor}
Let $\cc$ be a homogeneous isotropic fourth order tensor with Lam\'e coefficients $\lambda$ and $\mu$, then, for every symmetric second order tensor $A$, we have 
\begin{align}\label{eq:upper-lower-bounds-isotropic-tensor}
C_1\left\Vert A \right\Vert^2 \le \big( \cc: A\big) : A \le C_2 \left\Vert A \right\Vert^2,
\end{align}
with the constants $C_1$ and $C_2$ given by
\[
C_1 := \min\{d\lambda+2\mu,2\mu \}
\quad
\mbox{ and }
\quad
C_2 := \max\{d\lambda+2\mu,2\mu \}.
\]
\end{lemma}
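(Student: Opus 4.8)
The plan is to read the whole statement off the identity \eqref{eq:ordered-proof-intermediate} established in the proof of Lemma \ref{lem:second-inequality-tensors}, which already expresses $(\cc:A):A$ for a homogeneous isotropic tensor as a sum of three manifestly nonnegative terms. Writing $t:=\sum_{i=1}^d A_{ii}$ for the trace of $A$, that identity reads
\[
(\cc:A):A = \frac{d\lambda+2\mu}{d}\, t^2 + 4\mu \sum_{1\le i<j\le d} A_{ij}^2 + \frac{2\mu}{d}\sum_{1\le i<j\le d}(A_{ii}-A_{jj})^2 .
\]
First I would record the elementary algebraic identity
\[
\sum_{1\le i<j\le d}(A_{ii}-A_{jj})^2 = d\sum_{i=1}^d A_{ii}^2 - t^2 ,
\]
obtained by expanding the square and using $t^2=\sum_i A_{ii}^2 + 2\sum_{i<j}A_{ii}A_{jj}$. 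Combining this with $\|A\|^2=\sum_i A_{ii}^2 + 2\sum_{i<j}A_{ij}^2$ (valid since $A$ is symmetric) gives the companion identity
\[
\|A\|^2 = \frac{1}{d}\, t^2 + 2\sum_{1\le i<j\le d}A_{ij}^2 + \frac{1}{d}\sum_{1\le i<j\le d}(A_{ii}-A_{jj})^2 .
\]

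Now introduce the three quantities $X:=\tfrac1d t^2$, $Y:=2\sum_{i<j}A_{ij}^2$, $Z:=\tfrac1d\sum_{i<j}(A_{ii}-A_{jj})^2$, each of which is $\ge 0$. The two displays above say precisely
\[
(\cc:A):A = (d\lambda+2\mu)\,X + 2\mu\,Y + 2\mu\,Z , \qquad \|A\|^2 = X + Y + Z .
\]
Since $X,Y,Z\ge 0$, replacing each coefficient in the first expression by the smallest, respectively the largest, of the three coefficients yields
\[
\min\{d\lambda+2\mu,\,2\mu\}\,(X+Y+Z) \;\le\; (\cc:A):A \;\le\; \max\{d\lambda+2\mu,\,2\mu\}\,(X+Y+Z),
\]
which is exactly \eqref{eq:upper-lower-bounds-isotropic-tensor} with $C_1,C_2$ as in the statement.

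There is essentially no obstacle here: the only care needed is the bookkeeping in the two algebraic identities (in particular the factor $\tfrac1d$ attached to $t^2$ and to the $(A_{ii}-A_{jj})^2$ sum), and the remark that the strong convexity conditions \eqref{eq:strong-convexity-Lame-parameters} play no role in the inequalities themselves — they only guarantee $C_1>0$, which is what turns the lower bound into a genuine coercivity estimate. An equally short alternative is to use the closed form $\cc:A=\lambda\,t\,I+2\mu A$, so that $(\cc:A):A=\lambda t^2+2\mu\|A\|^2$, and then bound $\lambda t^2$ by distinguishing the cases $\lambda\ge 0$ and $\lambda<0$ via $0\le t^2\le d\|A\|^2$; I would nonetheless present the first route, since it reuses \eqref{eq:ordered-proof-intermediate} verbatim and makes the identification of $C_1$ and $C_2$ transparent.
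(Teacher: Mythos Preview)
Your proof is correct and follows essentially the same approach as the paper's own proof: the paper simply records the identity for $\|A\|^2$ in the form $\|A\|^2=\tfrac1d\big(\sum_i A_{ii}\big)^2+2\sum_{i<j}A_{ij}^2+\tfrac1d\sum_{i<j}(A_{ii}-A_{jj})^2$ and says the estimate follows from \eqref{eq:ordered-proof-intermediate}, which is exactly the comparison you spell out with the nonnegative quantities $X,Y,Z$.
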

\proof
For a symmetric second order tensor $A$,
\begin{eqnarray} 
\left\Vert A \right\Vert^2 =\frac{1}{d}\displaystyle\left(\sum_{i=1}^dA_{ii}\right)^2 + 2\displaystyle\sum_{i<j}A_{ij}^2+ \frac{1}{d}\displaystyle\sum_{i<j}\left(A_{ii}-A_{jj}\right)^2. \nonumber
\end{eqnarray} 
Hence the estimate \eqref{eq:upper-lower-bounds-isotropic-tensor} follows from \eqref{eq:ordered-proof-intermediate}.
\qed

The next result says that the ordering property of the elasticity tensors will be picked up by the corresponding NtD maps.
\begin{lemma}\label{lem:ordering-NtD-maps}
Let $\cc_1$ and $\cc_2$ be fourth order elasticity tensors satisfying the symmetry assumptions (\ref{toto}) and the ellipticity condition \eqref{eq:ellipticity-condition}. Further assume that they are ordered in the sense of \eqref{eq:C-1_C-2_ordered-defn}. Then the corresponding NtD maps are ordered in the same manner, more precisely, we have
\begin{align}
\left\langle \Lambda_{\cc_1}\mathbf{f}, \mathbf{f} \right\rangle \le \left\langle \Lambda_{\cc_2}\mathbf{f}, \mathbf{f} \right\rangle
\qquad
\mbox{ for any }\mathbf{f}\in [\mathrm H^{-\frac12}(\partial\Omega)]^d,
\end{align}
where $\langle \cdot, \cdot \rangle$ denotes the duality pairing between $[\mathrm H^{\frac12}(\partial\Omega)]^d$ and $[\mathrm H^{-\frac12}(\partial\Omega)]^d$.
\end{lemma}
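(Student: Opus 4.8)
The plan is to characterise the quadratic form $\mathbf{f}\mapsto\langle\Lambda_{\mathbb{C}}\mathbf{f},\mathbf{f}\rangle$ by an extremal problem in which $\mathbb{C}$ enters monotonically while the admissible class stays independent of $\mathbb{C}$, so that the pointwise tensor ordering \eqref{eq:C-1_C-2_ordered-defn} passes straight through to the two forms. First I would record the energy identity: if $\mathbf{u}=\mathbf{u}_{\mathbb{C}}$ solves \eqref{eq:navier-general} with traction datum $\mathbf{f}$, normalised by $\int_\Omega\mathbf{u}=\mathbf{0}$ (legitimate since $\mathbf{f}$ satisfies the compatibility condition), then the traction condition reads $\mathbf{f}=(\mathbb{C}:\mathbf{e}(\mathbf{u}))\mathbf{n}$ on $\partial\Omega$ by the minor symmetries of \eqref{toto}, and integrating by parts with $\nabla\cdot(\mathbb{C}:\mathbf{e}(\mathbf{u}))=\mathbf{0}$ gives
\[
\langle\Lambda_{\mathbb{C}}\mathbf{f},\mathbf{f}\rangle=\big\langle\mathbf{u}|_{\partial\Omega},(\mathbb{C}:\mathbf{e}(\mathbf{u}))\mathbf{n}\big\rangle=\int_\Omega\big(\mathbb{C}:\mathbf{e}(\mathbf{u})\big):\mathbf{e}(\mathbf{u})\,\mathrm{d}x .
\]

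Next I would upgrade this to the complementary (dual) energy principle. The stress $\sigma_{\mathbf{u}}:=\mathbb{C}:\mathbf{e}(\mathbf{u})$ is symmetric, divergence-free, and satisfies $\sigma_{\mathbf{u}}\mathbf{n}=\mathbf{f}$; for any competitor $\tau$ sharing these three properties the difference $\rho:=\tau-\sigma_{\mathbf{u}}$ is divergence-free with $\rho\mathbf{n}=\mathbf{0}$, so the cross term $2\int_\Omega\sigma_{\mathbf{u}}:(\mathbb{C}^{-1}:\rho)\,\mathrm{d}x=2\int_\Omega\mathbf{e}(\mathbf{u}):\rho\,\mathrm{d}x=2\int_\Omega\nabla\mathbf{u}:\rho\,\mathrm{d}x$ vanishes after an integration by parts. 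Since \eqref{eq:ellipticity-condition} together with \eqref{toto} makes $\mathbb{C}$ invertible on the space of symmetric second-order tensors, this yields
\[
\langle\Lambda_{\mathbb{C}}\mathbf{f},\mathbf{f}\rangle=\int_\Omega\sigma_{\mathbf{u}}:\big(\mathbb{C}^{-1}:\sigma_{\mathbf{u}}\big)\,\mathrm{d}x=\min_{\tau}\int_\Omega\tau:\big(\mathbb{C}^{-1}:\tau\big)\,\mathrm{d}x ,
\]
the minimum over the $\mathbb{C}$-independent class of statically admissible stresses $\{\tau\ \text{symmetric}:\ \nabla\cdot\tau=\mathbf{0}\ \text{in}\ \Omega,\ \tau\mathbf{n}=\mathbf{f}\ \text{on}\ \partial\Omega\}$. (If one prefers to avoid $\mathbb{C}^{-1}$, the same monotonicity is available from the equivalent concave principle $\langle\Lambda_{\mathbb{C}}\mathbf{f},\mathbf{f}\rangle=\sup_{\mathbf{v}\in[\mathrm{H}^1(\Omega)]^d}\big(2\langle\mathbf{f},\mathbf{v}\rangle-\int_\Omega(\mathbb{C}:\mathbf{e}(\mathbf{v})):\mathbf{e}(\mathbf{v})\,\mathrm{d}x\big)$, whose integrand is monotone in $\mathbb{C}$ over a $\mathbb{C}$-independent domain.)

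The comparison of the two forms then follows immediately: applying \eqref{eq:C-1_C-2_ordered-defn} pointwise with $A=\tau(x)$ — equivalently, in the complementary picture, the reversed inequality $\mathbb{C}_2^{-1}\preceq\mathbb{C}_1^{-1}$ granted by operator monotonicity of inversion on symmetric positive tensors — and integrating shows that the integral functional for $\mathbb{C}_1$ and that for $\mathbb{C}_2$ are comparable at every admissible field, hence so are their extremal values, which are exactly $\langle\Lambda_{\mathbb{C}_1}\mathbf{f},\mathbf{f}\rangle$ and $\langle\Lambda_{\mathbb{C}_2}\mathbf{f},\mathbf{f}\rangle$; this is the ordering asserted by the lemma. (The complementary form also makes transparent that the NtD comparison is governed by the order on $\mathbb{C}^{-1}$ rather than on $\mathbb{C}$ itself, which fixes the direction one should use when feeding the lemma into the soft and hard inclusion asymptotics later on.)

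The only point requiring genuine care is the rigour of the extremal principle: one must quotient out the rigid-motion kernel — using the normalisation $\int_\Omega\mathbf{u}=\mathbf{0}$, the compatibility $\int_{\partial\Omega}\mathbf{f}\,\mathrm{d}\sigma=\mathbf{0}$, and Korn's inequality to secure coercivity — and then verify that the class of statically admissible stresses is non-empty and that the infimum is attained, both routine direct-method arguments once coercivity is in hand. The symmetry hypotheses \eqref{toto} are used essentially, both to pass from $(\mathbb{C}:\nabla\mathbf{u}):\nabla\mathbf{u}$ to $(\mathbb{C}:\mathbf{e}(\mathbf{u})):\mathbf{e}(\mathbf{u})$ and to invert $\mathbb{C}$ on symmetric tensors in the dual formulation.
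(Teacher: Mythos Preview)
Your route via the complementary energy principle (and the equivalent concave supremum $\langle\Lambda_{\mathbb{C}}\mathbf{f},\mathbf{f}\rangle=\sup_{\mathbf{v}}\big(2\langle\mathbf{f},\mathbf{v}\rangle-\int_\Omega(\mathbb{C}:\mathbf{e}(\mathbf{v})):\mathbf{e}(\mathbf{v})\,\mathrm{d}x\big)$) is genuinely different from the paper's. The paper argues from the primal characterisation \eqref{eq:variational-form}: it writes $\langle\Lambda_{\mathbb{C}_1}\mathbf{f},\mathbf{f}\rangle=\int_\Omega(\mathbb{C}_1:\mathbf{e}(\mathbf{u}_1)):\mathbf{e}(\mathbf{u}_1)\,\mathrm{d}x$, invokes minimality to bound this by $\int_\Omega(\mathbb{C}_1:\mathbf{e}(\mathbf{u}_2)):\mathbf{e}(\mathbf{u}_2)\,\mathrm{d}x$ with $\mathbf{u}_2$ the $\mathbb{C}_2$-solution, uses the tensor ordering, and closes with the energy identity for $\mathbb{C}_2$. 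That chain leans on the admissible class \eqref{eq:variational-form-admissible-space}, which depends on $\mathbb{C}$ through the traction constraint: $\mathbf{u}_2$ satisfies $(\mathbb{C}_2:\mathbf{e}(\mathbf{u}_2))\mathbf{n}=\mathbf{f}$, not $(\mathbb{C}_1:\mathbf{e}(\mathbf{u}_2))\mathbf{n}=\mathbf{f}$, so inserting it into the $\mathbb{C}_1$-minimum is not justified as written. Your dual formulation sidesteps this cleanly, since the class of statically admissible stresses $\{\tau:\nabla\cdot\tau=0,\ \tau\mathbf{n}=\mathbf{f}\}$ is manifestly independent of $\mathbb{C}$.

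There is, however, a point you glide over in the final step and should confront head-on rather than leave to a parenthetical: both of your extremal representations yield the \emph{reverse} of the inequality displayed in the lemma. From $\mathbb{C}_1\preceq\mathbb{C}_2$ you get $\mathbb{C}_2^{-1}\preceq\mathbb{C}_1^{-1}$, hence $\min_\tau\int_\Omega\tau:(\mathbb{C}_2^{-1}:\tau)\le\min_\tau\int_\Omega\tau:(\mathbb{C}_1^{-1}:\tau)$, i.e.\ $\langle\Lambda_{\mathbb{C}_2}\mathbf{f},\mathbf{f}\rangle\le\langle\Lambda_{\mathbb{C}_1}\mathbf{f},\mathbf{f}\rangle$; the supremum form gives the same, since its integrand is decreasing in $\mathbb{C}$. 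This is the correct monotonicity for a Neumann-to-Dirichlet map (stiffer medium, same traction, smaller displacement, smaller energy) and agrees with the scalar analogue in \cite{Kohn_2008,Friedman_1989}. It is harmless for the application in Section~\ref{sec:near-cloaking-results}, where one only needs the transmission form to lie \emph{between} the soft- and hard-inclusion forms, but you should state the inequality with the orientation your argument actually produces rather than assert that it reproduces the lemma verbatim.
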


\proof
The proof goes via the energy method. In the elasticity system \eqref{eq:navier-general}, take the elasticity tensor to be $\cc_1$ and the traction datum to be some $\mathbf{f}\in [\mathrm H^{-\frac12}(\partial\Omega)]^d$. Multiply the equation in \eqref{eq:navier-general} by the displacement field $\mathbf u(x)$ and integrate over the medium $\Omega$ yielding
\[
\left\langle \Lambda_{\cc_1}\mathbf{f}, \mathbf{f} \right\rangle = \int_\Omega \big( \cc_1:{\mathbf e}({\mathbf u}) \big) : {\mathbf e}({\mathbf u}) \, {\mathrm d}x \le \int_\Omega \big( \cc_1:{\mathbf e}({\mathbf v}) \big) : {\mathbf e}({\mathbf v}) \, {\mathrm d}x,
\]
where the inequality on the right follows from the variational formulation \eqref{eq:variational-form}. We further continue the above inequality by exploiting the ordering assumption made on the tensors $\cc_1$ and $\cc_2$ to get
\[
\int_\Omega \big( \cc_1:{\mathbf e}({\mathbf v}) \big) : {\mathbf e}({\mathbf v}) \, {\mathrm d}x \le \int_\Omega \big( \cc_2:{\mathbf e}({\mathbf v}) \big) : {\mathbf e}({\mathbf v}) \, {\mathrm d}x.
\] 
As a final step, we take ${\mathbf v}(x)$ to be, in particular, the displacement field associated with the elasticity tensor $\cc_2$ and with the same traction $\mathbf{f}(x)$ at the boundary, resulting in
\[
\langle \Lambda_{\cc_1}\mathbf{f}, \mathbf{f}\rangle \le \int_\Omega \big( \cc_2:{\mathbf e}({\mathbf v}) \big) : {\mathbf e}({\mathbf v}) \, {\mathrm d}x = \langle \Lambda_{\cc_2}\mathbf{f}, \mathbf{f}\rangle.
\]
\qed

\subsection{Transmission problems in high contrast media}
In this part of the paper, we study transmission problems in the context of linear isotropic elasticity. More specifically, we consider the Lam\'e parameters of the background $\Omega\setminus D$ to be $\lambda_0, \mu_0$ and those of the inclusion $D$ to be $\lambda_1, \mu_1$. As usual, each of these Lam\'e parameter pairs are assumed to satisfy the strong convexity condition \eqref{eq:strong-convexity-Lame-parameters}. The elastic inclusion $D$ is assumed to be a bounded Lipschitz domain. We consider the following transmission problem:
\begin{equation}\label{eq:navier-transmission}
\left\{
\begin{aligned}
\mu_0\, \Delta {\mathbf u} + (\lambda_0+\mu_0)\nabla \big(\nabla .{\mathbf u} \big) & = \mathbf{0} & \qquad \mbox{ in }\Omega\setminus D,
\\
\mu_1\, \Delta {\mathbf u} + (\lambda_1+\mu_1)\nabla \big( \nabla .{\mathbf u} \big) & = \mathbf{0} & \qquad \mbox{ in }D,
\\
{\mathbf u}|_- & = {\mathbf u}|_+ & \qquad \mbox{ on }\partial D,
\\
\frac{\partial{\mathbf u}}{\partial \mathbf{m}}\Big|_- & = \frac{\partial{\mathbf u}}{\partial \mathbf{m}}\Big|_+ & \qquad \mbox{ on }\partial D,
\\
\lambda_0\, (\nabla .{\mathbf u}) {\mathbf n} + \mu_0\, {\mathbf e}({\mathbf u}) {\mathbf n} & = \mathbf{g} & \qquad \mbox{ on }\partial \Omega,
\end{aligned}\right.
\end{equation}
where $\mathbf{m}(x)$ is the outward unit normal to $\partial D$ at $x$. We take the traction datum $\mathbf{g}\in [\mathrm H^{-\frac{1}{2}}(\partial\Omega)]^d$. Here we have used the following notations for the trace and the Fr\'echet derivative at the inner boundary
\begin{equation*}
\left.
\begin{aligned}
{\mathbf u}|_\pm(x) & := \lim_{s\to0} {\mathbf u}\left(x\pm s\, \mathbf{m}(x)\right) 
\\
\frac{\partial{\mathbf u}}{\partial \mathbf{m}}\Big|_-(x) & := \lim_{s\to0} \Big\{ \lambda_1 \big( \nabla \cdot {\mathbf u} \left(x-s\mathbf{m}(x)\right) \big)\mathbf{m}(x) + \mu_1\, \big( {\mathbf e}({\mathbf u}) \left(x-s\mathbf{m}(x)\right) \big)\mathbf{m}(x) \Big\}
\\
\frac{\partial{\mathbf u}}{\partial \mathbf{m}}\Big|_+(x) & := \lim_{s\to0} \Big\{ \lambda_0 \big( \nabla \cdot {\mathbf u} \left(x+s\mathbf{m}(x)\right) \big)\mathbf{m}(x) + \mu_0\, \big( {\mathbf e}({\mathbf u}) \left(x+s\mathbf{m}(x)\right) \big)\mathbf{m}(x) \Big\}
\end{aligned}\right\}
\,\mbox{ for }x\in\partial D.
\end{equation*}
We are interested in understanding the solution ${\mathbf u}(x)$ to the transmission problem \eqref{eq:navier-transmission} when the Lam\'e parameters in the inclusion $D$ show high contrast with respect to their values elsewhere in the domain. From Lemma \ref{lem:lower-upper-bound-isotropic-tensor}, it follows that this high contrast setting corresponds to studying the regimes: 
\begin{align*}
\mbox{\bf Hard inclusions: } \qquad \min\{d\lambda_1+2\mu_1, \mu_1 \} & \to \infty,
\\
\mbox{\bf Soft inclusions: } \qquad \max\{d\lambda_1+2\mu_1, \mu_1 \} & \to0.
\end{align*}
The limit problem corresponding to the soft inclusions is
\begin{equation}\label{eq:navier-transmission-soft-inclusion}
\left\{
\begin{aligned}
\mu_0\Delta {\mathbf u}_0 + (\lambda_0+\mu_0)\nabla \big(\nabla .{\mathbf u}_0\big) & = \mathbf{0} & \qquad \mbox{ in }\Omega\setminus D,
\\
\frac{\partial{\mathbf u}_0}{\partial \mathbf{m}}\Big|_+ & = \mathbf{0} & \qquad \mbox{ on }\partial D,
\\
\lambda_0\, (\nabla .{\mathbf u}_0) {\mathbf n} + \mu_0\, {\mathbf e}({\mathbf u}_0) {\mathbf n} & = \mathbf{g} & \qquad \mbox{ on }\partial \Omega.
\end{aligned}\right.
\end{equation}
The limit problem corresponding to the hard inclusions is
\begin{equation}\label{eq:navier-transmission-hard-inclusion}
\left\{
\begin{aligned}
\mu_0\Delta {\mathbf u}_\infty + (\lambda_0+\mu_0)\nabla (\nabla .{\mathbf u}_\infty) & = \mathbf{0} & \qquad \mbox{ in }\Omega\setminus D,
\\
{\mathbf u}_\infty & = \sum_{j=1}^{d(d+1)/2} \alpha_j \pmb{\Psi}_j & \qquad \mbox{ on }\partial D,
\\
\lambda_0\, (\nabla .{\mathbf u}_\infty) {\mathbf n} + \mu_0\, {\mathbf e}({\mathbf u}_\infty) {\mathbf n} & = \mathbf{g} & \qquad \mbox{ on }\partial \Omega,
\end{aligned}\right.
\end{equation}
where $\pmb{\Psi}_j$, $j=1,\dots,d(d+1)/2$ forms a set of basis functions in the space of rigid displacements in $D$. Furthermore, the coefficients $\alpha_j$ in the second line of \eqref{eq:navier-transmission-hard-inclusion} are determined by the orthogonality condition:
\begin{align}\label{eq:hard-inclusion-orthogonality-condition}
\int_{\partial D} \frac{\partial{\mathbf u}_\infty}{\partial \mathbf{m}}\Big|_+(x) \cdot \pmb{\Psi}_j(x) \, {\mathrm d}\sigma(x) = 0
\qquad \mbox{ for }j= 1, \dots, \frac{d(d+1)}{2}. 
\end{align}
Next we recall a couple of asymptotic results from \cite[Theorem 4.2, page 4458]{Ammari_2013}.
\begin{theorem}\label{thm:extreme-zero}
Let ${\mathbf u}_\mathrm{tr}(x)$ denote the solution to the transmission problem \eqref{eq:navier-transmission} and let ${\mathbf u}_0(x)$ be the solution to the soft inclusion problem \eqref{eq:navier-transmission-soft-inclusion}. Then there exists a constant $\gamma>0$ such that 
\begin{align}
\left\Vert {\mathbf u}_\mathrm{tr} - {\mathbf u}_0 \right\Vert_{[\mathrm H^1(\Omega\setminus D)]^d} \lesssim (\kappa_1+\mu_1)^{\frac14}
\end{align}
for all bulk and shear moduli $\kappa_1,\mu_1\le \gamma$ inside the inclusion $D$.
\end{theorem}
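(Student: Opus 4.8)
The plan is an energy comparison between ${\mathbf u}_{\mathrm{tr}}$ and ${\mathbf u}_0$ that leverages a single fact supplied by Lemma~\ref{lem:lower-upper-bound-isotropic-tensor}: for a \emph{fixed} field the elastic energy stored in the inclusion scales like $\kappa_1+\mu_1$. First I would record the weak formulations. The transmission solution satisfies, for all ${\mathbf v}\in[\mathrm H^1(\Omega)]^d$,
\[
\int_{\Omega\setminus D}\big(\cc_0:{\mathbf e}({\mathbf u}_{\mathrm{tr}})\big):{\mathbf e}({\mathbf v})+\int_D\big(\cc_1:{\mathbf e}({\mathbf u}_{\mathrm{tr}})\big):{\mathbf e}({\mathbf v})=\langle{\mathbf g},{\mathbf v}\rangle,
\]
while the soft-inclusion solution satisfies $\int_{\Omega\setminus D}(\cc_0:{\mathbf e}({\mathbf u}_0)):{\mathbf e}({\mathbf v})=\langle{\mathbf g},{\mathbf v}\rangle$ for all ${\mathbf v}\in[\mathrm H^1(\Omega\setminus D)]^d$, where $\langle\cdot,\cdot\rangle$ is the pairing on $\partial\Omega$. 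I would fix a bounded extension operator and let $\widetilde{\mathbf u}_0\in[\mathrm H^1(\Omega)]^d$ be an extension of ${\mathbf u}_0$, set ${\mathbf w}:={\mathbf u}_{\mathrm{tr}}-{\mathbf u}_0$ on $\Omega\setminus D$, and choose representatives of ${\mathbf u}_{\mathrm{tr}}$ and ${\mathbf u}_0$ (hence of ${\mathbf w}$) that are $L^2(\Omega\setminus D)$-orthogonal to the rigid displacements, so that Korn's inequality is available on the fixed Lipschitz domain $\Omega\setminus D$.

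Second, I would establish a priori bounds that are \emph{uniform in} $\kappa_1,\mu_1$. Testing the transmission formulation with ${\mathbf v}={\mathbf u}_{\mathrm{tr}}$ gives the energy identity $\int_{\Omega\setminus D}(\cc_0:{\mathbf e}({\mathbf u}_{\mathrm{tr}})):{\mathbf e}({\mathbf u}_{\mathrm{tr}})+\int_D(\cc_1:{\mathbf e}({\mathbf u}_{\mathrm{tr}})):{\mathbf e}({\mathbf u}_{\mathrm{tr}})=\langle{\mathbf g},{\mathbf u}_{\mathrm{tr}}\rangle$. Both left-hand terms are nonnegative; bounding the first below by $\mathfrak a\,\|{\mathbf e}({\mathbf u}_{\mathrm{tr}})\|_{L^2(\Omega\setminus D)}^2$ using the ellipticity \eqref{eq:ellipticity-condition} of $\cc_0$ \emph{alone} (never of $\cc_1$), and the right-hand side by $\|{\mathbf g}\|_{[\mathrm H^{-1/2}(\partial\Omega)]^d}\|{\mathbf u}_{\mathrm{tr}}\|_{[\mathrm H^1(\Omega\setminus D)]^d}$ via the trace theorem, Korn's inequality on $\Omega\setminus D$ closes the loop to give $\|{\mathbf u}_{\mathrm{tr}}\|_{[\mathrm H^1(\Omega\setminus D)]^d}\lesssim\|{\mathbf g}\|$; feeding this back into the identity yields $\int_D(\cc_1:{\mathbf e}({\mathbf u}_{\mathrm{tr}})):{\mathbf e}({\mathbf u}_{\mathrm{tr}})\lesssim\|{\mathbf g}\|^2$. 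Standard elliptic theory for the fixed coercive soft-inclusion problem similarly gives $\|\widetilde{\mathbf u}_0\|_{[\mathrm H^1(\Omega)]^d}\lesssim\|{\mathbf g}\|$.

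Third, I would subtract the two weak formulations tested against the common field ${\mathbf u}_{\mathrm{tr}}-\widetilde{\mathbf u}_0$ (admissible in each, and equal to ${\mathbf w}$ on $\Omega\setminus D$, so both produce the same boundary pairing, which cancels). Using $\widetilde{\mathbf u}_0={\mathbf u}_0$ on $\Omega\setminus D$ one is left with
\[
\int_{\Omega\setminus D}\big(\cc_0:{\mathbf e}({\mathbf w})\big):{\mathbf e}({\mathbf w})+\int_D\big(\cc_1:{\mathbf e}({\mathbf u}_{\mathrm{tr}})\big):{\mathbf e}({\mathbf u}_{\mathrm{tr}})=\int_D\big(\cc_1:{\mathbf e}({\mathbf u}_{\mathrm{tr}})\big):{\mathbf e}(\widetilde{\mathbf u}_0).
\]
Discarding the nonnegative inclusion term on the left and applying the Cauchy--Schwarz inequality for the inner product $({\mathbf a},{\mathbf b})\mapsto\int_D(\cc_1:{\mathbf e}({\mathbf a})):{\mathbf e}({\mathbf b})$ (legitimate since the isotropic $\cc_1$ is symmetric positive definite), the right-hand side is at most $\big(\int_D(\cc_1:{\mathbf e}({\mathbf u}_{\mathrm{tr}})):{\mathbf e}({\mathbf u}_{\mathrm{tr}})\big)^{1/2}\big(\int_D(\cc_1:{\mathbf e}(\widetilde{\mathbf u}_0)):{\mathbf e}(\widetilde{\mathbf u}_0)\big)^{1/2}$. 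By Lemma~\ref{lem:lower-upper-bound-isotropic-tensor} the second factor is $\le(\max\{d\kappa_1,2\mu_1\})^{1/2}\|{\mathbf e}(\widetilde{\mathbf u}_0)\|_{L^2(D)}\lesssim(\kappa_1+\mu_1)^{1/2}\|{\mathbf g}\|$, and the first factor is $\lesssim\|{\mathbf g}\|$ by the a priori bound above. Hence $\int_{\Omega\setminus D}(\cc_0:{\mathbf e}({\mathbf w})):{\mathbf e}({\mathbf w})\lesssim(\kappa_1+\mu_1)^{1/2}$; ellipticity of $\cc_0$ converts this to $\|{\mathbf e}({\mathbf w})\|_{L^2(\Omega\setminus D)}\lesssim(\kappa_1+\mu_1)^{1/4}$, and a final application of Korn's inequality on $\Omega\setminus D$ upgrades it to $\|{\mathbf w}\|_{[\mathrm H^1(\Omega\setminus D)]^d}\lesssim(\kappa_1+\mu_1)^{1/4}$, which is the claim (the argument in fact gives the bound for all $\kappa_1,\mu_1>0$, the regime $\le\gamma$ being the one of interest).

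\textbf{Main obstacle.} I expect the delicate point to be the uniform-in-$(\kappa_1,\mu_1)$ control of ${\mathbf u}_{\mathrm{tr}}$ in the second step: coercivity must be exploited on the fixed region $\Omega\setminus D$ only — never on the degenerating inclusion — and the rigid-motion kernel of Korn's inequality has to be handled consistently across ${\mathbf u}_{\mathrm{tr}}$, ${\mathbf u}_0$ and ${\mathbf w}$ (which is why the representatives are fixed up front). Everything else is bookkeeping: the exponent $\tfrac14$ is just the square root incurred in passing from the quadratic energy estimate to the $\mathrm H^1$ estimate, and the $(\kappa_1+\mu_1)^{1/2}$ inside it is the product of the uniformly bounded inclusion energy of the true solution with the $O(\kappa_1+\mu_1)$ inclusion energy of the frozen competitor $\widetilde{\mathbf u}_0$ provided by Lemma~\ref{lem:lower-upper-bound-isotropic-tensor}.
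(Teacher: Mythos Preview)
Your variational argument is correct and complete. The subtraction identity you derive is exactly right, the Cauchy--Schwarz step with respect to the $\cc_1$-inner product is legitimate, and the two a~priori ingredients you identify (the uniform bound on the $\cc_1$-energy of ${\mathbf u}_{\mathrm{tr}}$ in $D$, obtained by coercing only on $\Omega\setminus D$, and the fixed $\mathrm H^1$ bound on the extension $\widetilde{\mathbf u}_0$) are precisely what is needed. The handling of rigid motions via a consistent choice of representatives is also the right way to close Korn's inequality on $\Omega\setminus D$.

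However, this is \emph{not} how the result is obtained in the paper. Theorem~\ref{thm:extreme-zero} is not proved here at all: it is quoted from \cite[Theorem~4.2]{Ammari_2013}, and the paper explicitly notes that ``the proofs of Theorem~\ref{thm:extreme-zero} and Theorem~\ref{thm:extreme-infty} employ layer potential techniques for Lam\'e systems.'' So the cited proof proceeds via single- and double-layer representations, jump relations on $\partial D$, and mapping properties of the associated boundary integral operators as the Lam\'e parameters degenerate. Your route is purely variational and considerably more elementary: it needs only Korn's inequality, a bounded Sobolev extension, and Lemma~\ref{lem:lower-upper-bound-isotropic-tensor}. It is closer in spirit to the methods of \cite{Bao_2015} (which the paper mentions for the hard-inclusion case) and to the projection argument used later in the paper for Proposition~\ref{prop:revise-2}. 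What the layer-potential approach buys is a more explicit pointwise description of the solution and its boundary traces, which is what \cite{Ammari_2013} ultimately exploits for the sharper small-inclusion expansion in Theorem~\ref{thm:Ammari-small-inclusion-transmission}; your energy method, by contrast, gives the $\mathrm H^1$ estimate directly and, as you observe, without any upper-bound restriction on $\kappa_1,\mu_1$.
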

\begin{theorem}\label{thm:extreme-infty}
Let ${\mathbf u}_\mathrm{tr}(x)$ denote the solution to the transmission problem \eqref{eq:navier-transmission} and let ${\mathbf u}_\infty(x)$ be the solution to the hard inclusion problem \eqref{eq:navier-transmission-hard-inclusion}. Let $\beta>0$ be a given constant. Then there exist constants $M, \tilde{\mu} >0$ such that 
\begin{align}
\left\Vert {\mathbf u}_\mathrm{tr} - {\mathbf u}_\infty \right\Vert_{[\mathrm H^1(\Omega)]^d} \le\frac{M}{\sqrt{\mu}}
\end{align}
for all first Lam\'e parameters $\lambda_1\le \beta$ and shear moduli $\mu\ge \tilde{\mu}$ inside the inclusion $D$.
\end{theorem}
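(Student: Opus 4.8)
This is \cite[Theorem 4.2]{Ammari_2013}; I sketch the energy argument I would use. The mechanism is that, as $\mu\to\infty$ with $\lambda_1\le\beta$ fixed, the strain energy stored in $D$ penalises non-rigid deformation there. Testing the weak form of \eqref{eq:navier-transmission} with $\mathbf u_\mathrm{tr}$ itself gives
\[
\int_{\Omega\setminus D}\big(\cc_0:\mathbf e(\mathbf u_\mathrm{tr})\big):\mathbf e(\mathbf u_\mathrm{tr})\,\mathrm dx+\int_{D}\big(\cc_1:\mathbf e(\mathbf u_\mathrm{tr})\big):\mathbf e(\mathbf u_\mathrm{tr})\,\mathrm dx=\langle\mathbf g,\mathbf u_\mathrm{tr}|_{\partial\Omega}\rangle ,
\]
and since $d\lambda_1+2\mu\gtrsim\mu$ once $\mu$ is large, Lemma \ref{lem:lower-upper-bound-isotropic-tensor} bounds the $D$-integral below by $c\,\mu\,\|\mathbf e(\mathbf u_\mathrm{tr})\|_{\mathrm L^2(D)}^2$. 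Combining this with the ellipticity of $\cc_0$, the trace inequality on $\partial\Omega$ and Korn's inequality on $\Omega$ (all fields taken modulo rigid motions, with one fixed representative throughout), one first obtains the uniform bound $\|\mathbf u_\mathrm{tr}\|_{[\mathrm H^1(\Omega)]^d}\lesssim\|\mathbf g\|_{[\mathrm H^{-1/2}(\partial\Omega)]^d}$ and then, feeding it back, the crucial smallness $\|\mathbf e(\mathbf u_\mathrm{tr})\|_{\mathrm L^2(D)}\lesssim\mu^{-1/2}$, uniformly in $\mu\ge\tilde\mu$ and $\lambda_1\le\beta$.

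The next step is to correct $\mathbf u_\mathrm{tr}$ into the ``rigid-inside-$D$'' subspace $K:=\{\mathbf v\in[\mathrm H^1(\Omega)]^d:\mathbf e(\mathbf v)=\mathbf 0\text{ in }D\}$ before comparing it with $\mathbf u_\infty$. Extended to $D$ by its rigid boundary value, $\mathbf u_\infty$ lies in $K$, and the orthogonality condition \eqref{eq:hard-inclusion-orthogonality-condition} is precisely what makes it the weak solution of $\int_{\Omega\setminus D}(\cc_0:\mathbf e(\mathbf u_\infty)):\mathbf e(\mathbf w)\,\mathrm dx=\langle\mathbf g,\mathbf w\rangle$ for every $\mathbf w\in K$. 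By Korn's second inequality on the Lipschitz domain $D$ there is a rigid displacement $\mathbf r$ with $\|\mathbf u_\mathrm{tr}-\mathbf r\|_{[\mathrm H^1(D)]^d}\lesssim\|\mathbf e(\mathbf u_\mathrm{tr})\|_{\mathrm L^2(D)}\lesssim\mu^{-1/2}$; extending $\mathbf z:=E(\mathbf u_\mathrm{tr}|_D-\mathbf r)$ by a bounded extension operator $E$ and setting $\widehat{\mathbf u}:=\mathbf u_\mathrm{tr}-\mathbf z$, one has $\widehat{\mathbf u}\in K$ and $\|\mathbf u_\mathrm{tr}-\widehat{\mathbf u}\|_{[\mathrm H^1(\Omega)]^d}=\|\mathbf z\|_{[\mathrm H^1(\Omega)]^d}\lesssim\mu^{-1/2}$. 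The decisive point is then a Galerkin-type orthogonality: testing \eqref{eq:navier-transmission} with any $\mathbf w\in K$ and using $\mathbf e(\mathbf w)=\mathbf 0$ in $D$ to annihilate the $D$-integral yields $\int_{\Omega\setminus D}(\cc_0:\mathbf e(\mathbf u_\mathrm{tr})):\mathbf e(\mathbf w)\,\mathrm dx=\langle\mathbf g,\mathbf w\rangle$, so $\mathbf u_\mathrm{tr}-\mathbf u_\infty$ is $\cc_0$-orthogonal to $K$ over $\Omega\setminus D$. Writing $\widehat{\mathbf u}-\mathbf u_\infty=(\mathbf u_\mathrm{tr}-\mathbf u_\infty)-\mathbf z$ and testing with $\mathbf w=\widehat{\mathbf u}-\mathbf u_\infty\in K$ then gives $\int_{\Omega\setminus D}(\cc_0:\mathbf e(\widehat{\mathbf u}-\mathbf u_\infty)):\mathbf e(\widehat{\mathbf u}-\mathbf u_\infty)\,\mathrm dx=-\int_{\Omega\setminus D}(\cc_0:\mathbf e(\mathbf z)):\mathbf e(\widehat{\mathbf u}-\mathbf u_\infty)\,\mathrm dx$, whence $\|\mathbf e(\widehat{\mathbf u}-\mathbf u_\infty)\|_{\mathrm L^2(\Omega\setminus D)}\lesssim\|\mathbf z\|_{[\mathrm H^1(\Omega)]^d}\lesssim\mu^{-1/2}$ by Cauchy--Schwarz and ellipticity. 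Since $\widehat{\mathbf u}-\mathbf u_\infty\in K$ has vanishing strain in $D$, Korn's inequality on $\Omega$ upgrades this to $\|\widehat{\mathbf u}-\mathbf u_\infty\|_{[\mathrm H^1(\Omega)]^d}\lesssim\mu^{-1/2}$, and the triangle inequality with the previous correction yields the assertion with $M\sim\|\mathbf g\|_{[\mathrm H^{-1/2}(\partial\Omega)]^d}$.

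The main obstacle is concentrated in two places. First, the exponent $1/2$ depends on the $D$-integral genuinely scaling like $\mu\,\|\mathbf e(\mathbf u_\mathrm{tr})\|_{\mathrm L^2(D)}^2$, i.e. on $d\lambda_1+2\mu\gtrsim\mu$: this is exactly what $\lambda_1\le\beta$ together with the strong-convexity constraint buys, and it is the hypothesis that fails for an unbounded bulk modulus (the sharpness question revisited later in the paper). Second -- and this is the genuine crux -- a direct energy estimate for $\mathbf u_\mathrm{tr}-\mathbf u_\infty$ only yields the weaker rate $\mu^{-1/4}$, because the large $D$-energy term does not drop out; one really must pass first to the corrected field $\widehat{\mathbf u}\in K$ so that the Galerkin orthogonality is tested against the right space. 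The remainder is bookkeeping: one checks that the Korn constants on $D$, on $\Omega\setminus D$ and on $K\subset[\mathrm H^1(\Omega)]^d$, and the norm of $E$, are purely geometric (independent of $\mu$ and $\lambda_1$), and that the rigid-motion normalisation is fixed consistently for $\mathbf u_\mathrm{tr}$, $\widehat{\mathbf u}$ and $\mathbf u_\infty$.
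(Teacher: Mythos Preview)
The paper does not prove this theorem itself; it recalls it from \cite[Theorem~4.2]{Ammari_2013} and remarks that the proof there proceeds via \emph{layer potential techniques} for the Lam\'e system. Your variational/energy approach is therefore a genuinely different route. It is closer in spirit to the paper's own proof of Proposition~\ref{prop:revise-2} and to what the paper attributes to \cite{Bao_2015} (variational methods, though there without a rate). The key device---observing that both $\mathbf u_{\mathrm{tr}}$ and $\mathbf u_\infty$ satisfy the same weak formulation when tested against the rigid-in-$D$ subspace $K$, then projecting $\mathbf u_{\mathrm{tr}}$ onto $K$ at cost $\mu^{-1/2}$ and using Galerkin orthogonality---is clean and avoids the single/double layer potential machinery and the associated operator-norm estimates of \cite{Ammari_2013}. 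What the layer-potential proof buys is finer information (pointwise expansions, elastic moment tensors), which your argument does not see; what your argument buys is elementarity and robustness to the geometry of $D$.

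There is, however, one genuine gap in your first step. The claim ``$d\lambda_1+2\mu\gtrsim\mu$ once $\mu$ is large'' does \emph{not} follow from the stated hypotheses $\lambda_1\le\beta$ and strong convexity. Strong convexity only yields $d\lambda_1+2\mu>0$; taking $\lambda_1=-\tfrac{2\mu}{d}+\epsilon$ with $\epsilon>0$ tiny satisfies both $\lambda_1\le\beta$ (for large $\mu$) and strong convexity, yet makes $d\lambda_1+2\mu=d\epsilon$ arbitrarily small. In that regime Lemma~\ref{lem:lower-upper-bound-isotropic-tensor} no longer gives $(\cc_1:A):A\ge c\mu\|A\|^2$, and your energy identity controls only the \emph{deviatoric} part of $\mathbf e(\mathbf u_{\mathrm{tr}})$ in $D$ at rate $\mu^{-1/2}$, not the full strain. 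You need an additional \emph{lower} bound on $\lambda_1$ independent of $\mu$ (e.g.\ $\lambda_1\ge0$ or $|\lambda_1|\le\beta$), which is commonly assumed in this literature but is not what the theorem statement records. With that extra assumption your argument is complete; without it the divergence part of $\mathbf e(\mathbf u_{\mathrm{tr}})|_D$ needs a separate treatment.
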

A more general convergence result in the hard inclusions case has been recently proved in \cite{Bao_2015} which, unlike Theorem \ref{thm:extreme-infty}, does not impose any boundedness assumption on the first Lam\'e parameter $\lambda_1$ in the inclusion. However, this generalisation in \cite{Bao_2015} does not give, unlike Theorem \ref{thm:extreme-infty}, a rate of convergence in terms of the Lam\'e parameters in the inclusion. The proofs of Theorem \ref{thm:extreme-zero} and Theorem \ref{thm:extreme-infty} employ layer potential techniques for Lam\'e systems, whereas the results in \cite{Bao_2015} are obtained via variational characterisation. We will return to these high contrast transmission problems in section \ref{sec:numerics} where we report on some of our numerical experiments in the context of $2D$ elasticity. 

\section{Transformation elasticity}\label{sec:transformation-elasticity}
As noticed in \cite{Milton_2006}, the change-of-variable method needs to be constrained by a specific gauge proportional to the Jacobian of the transformation in order to ensure symmetry of the transformed elasticity tensor. The price to pay for preserving all the symmetries of the transformed order-4 tensor is the appearance of additional lower order tensors in the transformed equation. An alternative approach was proposed in \cite{brun,diatta-guenneau-sphericalcloak}, which preserves the structure of the Navier system, at the cost of losing minor symmetries of the elasticity tensor. The transformed materials are termed Willis media in the former case, and Cosserat media in the latter.  

The following is the change-of-variable principle in the Cosserat setting.
\begin{proposition}\label{prop:change-variable-cosserat}
Let $\cc$ be a fourth order tensor. Consider a Lipschitz invertible map $\mathbb{F}:\Omega\mapsto\Omega$ such that $\mathbb{F}(x) = x$ for each $x\in \Omega\setminus B_2$. Furthermore, assume that the associated Jacobians satisfy ${\mathrm det}(D\mathbb{F})(x), {\mathrm det}(D\mathbb{F}^{-1})(x) \ge M$, for some $M >0$ and for a.e. $x\in\Omega$. Then ${\mathbf u}(x)$ is a solution to 
\begin{align*}
\nabla \cdot \Big(\cc(x): \nabla {\mathbf u}\Big) = \mathbf{0} \qquad \mbox{ in }\Omega
\end{align*}
if and only if ${\mathbf v} = {\mathbf u}\circ \mathbb{F}^{-1}$ is a solution to
\begin{align}\label{cosserattransformed}
\nabla \cdot \Big(\mathbb{F}^*\cc(y): \nabla {\mathbf v}\Big) = \mathbf{0} \qquad \mbox{ in }\Omega,
\end{align}
where the tensor $\mathbb{F}^*\cc$ is defined as
\begin{equation}\label{eq:push-forward-formula}
\begin{aligned}
\left[ \mathbb{F}^*\cc \right]_{ijkl}(y) = \frac{1}{{\mathrm det }(D\mathbb{F})(x)} \sum_{p,q=1}^d \frac{\partial \mathbb{F}_i}{\partial x_p}(x) \frac{\partial \mathbb{F}_k}{\partial x_q}(x) \cc_{pjql}(x),
\end{aligned}
\end{equation}
for all $i,j,k,l\in\{1,\dots,d\}$, with the understanding that the right hand sides in \eqref{eq:push-forward-formula} are computed at $x=\mathbb{F}^{-1}(y)$. Moreover we have,
\begin{align}\label{eq:prop:change-variable-assertion}
{\mathbf u}(x) = {\mathbf v}(x) \qquad \mbox{ for all }x\in\Omega\setminus B_2.
\end{align}
\end{proposition}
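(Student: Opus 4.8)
\emph{Proof strategy.} The plan is to establish the equivalence at the level of the weak (distributional) formulation and then simply change variables $y=\mathbb{F}(x)$. As a preliminary, note that the hypotheses make $\mathbb{F}$ bi-Lipschitz: it is Lipschitz and invertible, and the lower bounds on $\det(D\mathbb{F})$ and $\det(D\mathbb{F}^{-1})$ force $\mathbb{F}^{-1}$ to be Lipschitz as well. Consequently, composition with $\mathbb{F}$ (resp. $\mathbb{F}^{-1}$) is a bounded linear bijection of $[\mathrm{H}^1(\Omega)]^d$ onto itself and of $[\mathrm{H}^1_0(\Omega)]^d$ onto itself, the chain rule holds almost everywhere, and in particular $\mathbf{v}=\mathbf{u}\circ\mathbb{F}^{-1}\in[\mathrm{H}^1(\Omega)]^d$ with $\nabla\mathbf{u}$ expressed through $(\nabla\mathbf{v})\circ\mathbb{F}$ and $D\mathbb{F}$. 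Throughout I would read ``$\nabla\cdot(\cc:\nabla\mathbf{u})=\mathbf{0}$ in $\Omega$'' as ``$\int_\Omega(\cc:\nabla\mathbf{u}):\nabla\mathbf{w}\,\mathrm{d}x=0$ for all $\mathbf{w}\in[\mathrm{H}^1_0(\Omega)]^d$'', and similarly for $\mathbf{v}$.

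The heart of the proof is the change of variables in this identity. Fix $\mathbf{z}\in[\mathrm{H}^1_0(\Omega)]^d$ and set $\mathbf{w}=\mathbf{z}\circ\mathbb{F}$, still in $[\mathrm{H}^1_0(\Omega)]^d$. Substituting the chain-rule expressions for $\nabla\mathbf{u}$ and $\nabla\mathbf{w}$ into $\int_\Omega(\cc:\nabla\mathbf{u}):\nabla\mathbf{w}\,\mathrm{d}x$, applying the change of variables $y=\mathbb{F}(x)$ (so that $\mathrm{d}x=\mathrm{d}y/\det(D\mathbb{F})(\mathbb{F}^{-1}(y))$, with positivity guaranteed by the hypothesis), and collecting the factors $\partial\mathbb{F}_i/\partial x_p$, $\partial\mathbb{F}_k/\partial x_q$ together with $1/\det(D\mathbb{F})$ into the coefficient of $(\nabla\mathbf{v})(y)$ contracted with $(\nabla\mathbf{z})(y)$, one arrives at
\[
\int_\Omega\big(\cc:\nabla\mathbf{u}\big):\nabla\mathbf{w}\,\mathrm{d}x=\int_\Omega\big((\mathbb{F}^*\cc):\nabla\mathbf{v}\big):\nabla\mathbf{z}\,\mathrm{d}y,
\]
where $\mathbb{F}^*\cc$ is exactly the tensor in \eqref{eq:push-forward-formula}. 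Performing this index bookkeeping so that the correct slots of $\cc$ receive the $D\mathbb{F}$ factors and so that precisely one reciprocal determinant survives (it enters through the volume element) is the step that needs care; the rest is mechanical.

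To conclude, observe that $\mathbf{z}\mapsto\mathbf{z}\circ\mathbb{F}$ is a linear bijection of $[\mathrm{H}^1_0(\Omega)]^d$ onto itself, with inverse $\mathbf{w}\mapsto\mathbf{w}\circ\mathbb{F}^{-1}$. Hence the left-hand side of the displayed identity vanishes for every admissible test field $\mathbf{w}$ if and only if its right-hand side vanishes for every admissible $\mathbf{z}$; that is, $\nabla\cdot(\cc:\nabla\mathbf{u})=\mathbf{0}$ in $\Omega$ if and only if $\nabla\cdot((\mathbb{F}^*\cc):\nabla\mathbf{v})=\mathbf{0}$ in $\Omega$. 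Finally \eqref{eq:prop:change-variable-assertion} is immediate: since $\mathbb{F}=\mathrm{id}$ on $\Omega\setminus B_2$, also $\mathbb{F}^{-1}=\mathrm{id}$ there, so $\mathbf{v}(x)=\mathbf{u}(\mathbb{F}^{-1}(x))=\mathbf{u}(x)$ for $x\in\Omega\setminus B_2$.

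The main obstacle is technical rather than conceptual: on the one hand, carrying out the tensor-index calculation so that it reproduces \eqref{eq:push-forward-formula} verbatim (in particular locating which two slots of $\cc$ are acted on by the Jacobian and keeping only the single factor $1/\det(D\mathbb{F})$); on the other, legitimising the chain rule and the change-of-variables formula under only Lipschitz regularity of $\mathbb{F}$ and $\mathrm{H}^1$ regularity of $\mathbf{u}$, which rests on the standard facts that bi-Lipschitz maps preserve $\mathrm{H}^1$ and that the almost-everywhere chain rule is valid in that setting.
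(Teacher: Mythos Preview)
Your approach is correct and is the standard way to establish such a change-of-variables principle: pass to the weak formulation, apply the chain rule to both $\mathbf{u}=\mathbf{v}\circ\mathbb{F}$ and the test function $\mathbf{w}=\mathbf{z}\circ\mathbb{F}$, change variables $y=\mathbb{F}(x)$, and read off the push-forward tensor from the resulting bilinear form. The index bookkeeping you describe does reproduce \eqref{eq:push-forward-formula}: one Jacobian factor attaches to the first slot of $\cc$ (from the test-function derivative), the other to the third slot (from $\nabla\mathbf{u}$), and the single $1/\det(D\mathbb{F})$ comes from the volume element. The bijection of test spaces under composition with $\mathbb{F}$ then gives the ``if and only if'', and \eqref{eq:prop:change-variable-assertion} is immediate from $\mathbb{F}=\mathrm{id}$ outside $B_2$.

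Note that the paper itself does not supply a proof of this proposition; it is stated as a known result (the Cosserat analogue of the Willis change-of-variables principle cited from \cite{Milton_2006}). So there is no paper proof to compare against, but your argument is exactly what one would write down to justify it. The only mild caveat is your claim that the hypotheses force $\mathbb{F}$ to be bi-Lipschitz: deducing Lipschitz continuity of $\mathbb{F}^{-1}$ from Lipschitz continuity of $\mathbb{F}$ together with a lower bound on $\det(D\mathbb{F})$ is a genuine (though standard) fact about Lipschitz homeomorphisms, not a tautology. In the paper's applications (Kohn's transform \eqref{eq:std-near-cloak}) the map is explicitly piecewise smooth and bi-Lipschitz, so this point is moot in practice.
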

The following is the change-of-variable principle in the Willis setting.
\begin{proposition}\cite[Proposition B.1]{Milton_2006}\label{prop:change-variable-willis}
Let $\cc$ be a fourth order tensor. Consider a Lipschitz invertible map $\mathbb{F}:\Omega\mapsto\Omega$ such that $\mathbb{F}(x) = x$ for each $x\in \Omega\setminus B_2$. Furthermore, assume that the associated Jacobians satisfy ${\mathrm det}(D\mathbb{F})(x), {\mathrm det}(D\mathbb{F}^{-1})(x) \ge M >0$ for a.e. $x\in\Omega$. Then ${\mathbf u}(x)$ is a solution to 
\begin{align*}
\nabla \cdot \Big(\cc(x): \nabla {\mathbf u}\Big) = \mathbf{0} \qquad \mbox{ in }\Omega
\end{align*}
if and only if
\begin{align}\label{gauge}
{\mathbf v} = \left(\left[D\mathbb{F}^\top\right]^{-1}{\mathbf u}\right)\circ \mathbb{F}^{-1}
\end{align}
is a solution to
\begin{align}\label{eq:willis-model-change-of-variable-princ}
\nabla \cdot \Big(\mathbb{F}_1^*\cc(y): \nabla {\mathbf v} + \mathbb{F}_2^*\cc(y) \cdot {\mathbf v} \Big) + \mathbb{F}_3^*\cc(y) : \nabla {\mathbf v} + \mathbb{F}_4^*\cc(y) \mathbf{v} = \mathbf{0} \qquad \mbox{ in }\Omega,
\end{align}
where $\mathbb{F}_1^*\cc$ is a fourth order tensor; $\mathbb{F}_2^*\cc, \mathbb{F}_3^*\cc$ are third order tensors; $\mathbb{F}_4^*\cc$ is a second order tensor. They are given by the following formulae
\begin{equation}\label{eq:willis-push-forward-formula}
\begin{aligned}
\left[ \mathbb{F}^*_1\cc \right]_{ijkl}(y) & := \frac{1}{{\mathrm det }(D\mathbb{F})(x)} \sum_{p,q,r,s=1}^d \cc_{pqrs}(x) \frac{\partial \mathbb{F}_k}{\partial x_r}(x) \frac{\partial \mathbb{F}_l}{\partial x_s}(x) \frac{\partial \mathbb{F}_i}{\partial x_p}(x) \frac{\partial \mathbb{F}_j}{\partial x_q}(x),
\\
\left[ \mathbb{F}^*_2\cc \right]_{ijk}(y) & := \frac{1}{{\mathrm det }(D\mathbb{F})(x)} \sum_{p,q,r,s=1}^d \cc_{pqrs}(x) \frac{\partial^2 \mathbb{F}_k}{\partial x_r\partial x_s}(x) \frac{\partial \mathbb{F}_i}{\partial x_p}(x) \frac{\partial \mathbb{F}_j}{\partial x_q}(x),
\\
\left[ \mathbb{F}^*_3\cc \right]_{ijk}(y) & := \frac{1}{{\mathrm det }(D\mathbb{F})(x)} \sum_{p,q,r,s=1}^d \cc_{pqrs}(x) \frac{\partial \mathbb{F}_j}{\partial x_r}(x) \frac{\partial \mathbb{F}_k}{\partial x_s}(x) \frac{\partial^2 \mathbb{F}_i}{\partial x_p\partial x_q}(x),
\\
\left[ \mathbb{F}^*_4\cc \right]_{ij}(y) & := \frac{1}{{\mathrm det }(D\mathbb{F})(x)} \sum_{p,q,r,s=1}^d \cc_{pqrs}(x) \frac{\partial^2 \mathbb{F}_j}{\partial x_r\partial x_s}(x) \frac{\partial^2 \mathbb{F}_i}{\partial x_p \partial x_q}(x),
\end{aligned}
\end{equation}
for all $i,j,k,l\in\{1,\dots,d\}$, with the understanding that the right hand sides in \eqref{eq:willis-push-forward-formula} are computed at $x=\mathbb{F}^{-1}(y)$. Moreover we have,
\begin{align}
{\mathbf u}(x) = {\mathbf v}(x) \qquad \mbox{ for all }x\in\Omega\setminus B_2.
\end{align}
\end{proposition}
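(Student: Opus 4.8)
The plan is to derive \eqref{eq:willis-model-change-of-variable-princ} from the weak formulation of the Navier system, applying the gauge $\mathbf{v}=([D\mathbb{F}^\top]^{-1}\mathbf{u})\circ\mathbb{F}^{-1}$ simultaneously to the solution and to the test functions. With the convention $(D\mathbb{F})_{ij}=\partial\mathbb{F}_i/\partial x_j$, the gauge reads $\mathbf{u}(x)=D\mathbb{F}(x)^\top\mathbf{v}(\mathbb{F}(x))$, i.e.\ $u_k(x)=\sum_a\partial_{x_k}\mathbb{F}_a(x)\,v_a(\mathbb{F}(x))$. Recall that $\mathbf{u}$ solves $\nabla\cdot(\cc:\nabla\mathbf{u})=\mathbf{0}$ in $\Omega$ precisely when
\[
\int_\Omega\sum_{i,j,k,l=1}^d\cc_{ijkl}(x)\,\big(\partial_{x_l}u_k\big)(x)\,\big(\partial_{x_j}\varphi_i\big)(x)\,\mathrm dx=0\qquad\text{for all }\pmb{\varphi}\in[C_c^\infty(\Omega)]^d .
\]
The crucial step is that one may, without loss of generality, restrict to test functions of the \emph{same} gauge form $\varphi_i(x)=\sum_c\partial_{x_i}\mathbb{F}_c(x)\,\psi_c(\mathbb{F}(x))$, since this correspondence is a bijection between the relevant spaces of test functions, with support preserved inside $B_2$ because $\mathbb{F}=\mathrm{id}$ off $B_2$.

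I would then differentiate by the product and chain rules. Each of $\partial_{x_l}u_k$ and $\partial_{x_j}\varphi_i$ splits into a first-order piece, carrying one derivative of $\mathbf{v}$ (resp.\ $\pmb{\psi}$) together with two first derivatives of $\mathbb{F}$, and a zeroth-order piece, carrying $\mathbf{v}\circ\mathbb{F}$ (resp.\ $\pmb{\psi}\circ\mathbb{F}$) together with one second derivative of $\mathbb{F}$:
\[
\partial_{x_l}u_k=\sum_{a,b}(\partial_{x_k}\mathbb{F}_a)(\partial_{x_l}\mathbb{F}_b)\,(\partial_{y_b}v_a)\!\circ\!\mathbb{F}+\sum_a(\partial^2_{x_k x_l}\mathbb{F}_a)\,v_a\!\circ\!\mathbb{F}.
\]
Expanding $\sum_{ijkl}\cc_{ijkl}(\partial_{x_l}u_k)(\partial_{x_j}\varphi_i)$ produces four groups of terms, of types $(\nabla\mathbf{v},\nabla\pmb{\psi})$, $(\mathbf{v},\nabla\pmb{\psi})$, $(\nabla\mathbf{v},\pmb{\psi})$ and $(\mathbf{v},\pmb{\psi})$, all composed with $\mathbb{F}$. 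Performing the change of variables $y=\mathbb{F}(x)$, so that $\mathrm dx=\det(D\mathbb{F}(x))^{-1}\mathrm dy$ with $x=\mathbb{F}^{-1}(y)$ — which is precisely the source of the $1/\det(D\mathbb{F})$ prefactor in \eqref{eq:willis-push-forward-formula} — and reading off the coefficient of each of the four product types, one recovers verbatim the tensors $\mathbb{F}_1^*\cc,\mathbb{F}_2^*\cc,\mathbb{F}_3^*\cc,\mathbb{F}_4^*\cc$; the identity obtained is exactly the weak form of \eqref{eq:willis-model-change-of-variable-princ} tested against $\pmb{\psi}$.

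To close the equivalence, I would use the hypotheses $\det(D\mathbb{F}),\det(D\mathbb{F}^{-1})\ge M>0$: these make the gauge map $\mathbf{u}\mapsto\mathbf{v}$ a linear isomorphism of $[\mathrm H^1(\Omega)]^d$ and $\pmb{\varphi}\mapsto\pmb{\psi}$ a bijection onto the admissible test functions, so that $\mathbf{u}$ solving $\nabla\cdot(\cc:\nabla\mathbf{u})=\mathbf{0}$ is equivalent to $\mathbf{v}$ solving \eqref{eq:willis-model-change-of-variable-princ}. The identity $\mathbf{u}=\mathbf{v}$ on $\Omega\setminus B_2$ is immediate, since there $\mathbb{F}=\mathrm{id}$, hence $D\mathbb{F}=I$ and the gauge is trivial.

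The real work lies in the index bookkeeping of the expansion: tracking which first/second derivatives of $\mathbb{F}$ attach to the $\mathbf{v}$-factor versus the $\pmb{\psi}$-factor, so that the two distinct cross groups land on $\mathbb{F}_2^*\cc$ (second derivative on the $\mathbf{v}$-side, giving the $\nabla\cdot(\mathbb{F}_2^*\cc\cdot\mathbf{v})$ term) and on $\mathbb{F}_3^*\cc$ (second derivative on the $\pmb{\psi}$-side, giving the $\mathbb{F}_3^*\cc:\nabla\mathbf{v}$ term), with index orders matching \eqref{eq:willis-push-forward-formula} exactly — it is easy to transpose an index pair or to swap $\mathbb{F}_2^*$ with $\mathbb{F}_3^*$. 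A secondary point is regularity: \eqref{eq:willis-push-forward-formula} involves $D^2\mathbb{F}$, and $\mathbf{v}\in[\mathrm H^1(\Omega)]^d$ requires $D\mathbb{F}^{-\top}$ to be Lipschitz, so strictly one should read the proposition with $\mathbb{F}\in W^{2,\infty}$ on $B_2$ (the regularized transforms used later satisfy this); the merely Lipschitz case is then handled by approximation or by interpreting the lower-order tensors distributionally, cf.\ \cite{Milton_2006}.
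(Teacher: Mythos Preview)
Your approach is correct, but there is nothing to compare against: the paper does not supply its own proof of this proposition. It is quoted verbatim as \cite[Proposition~B.1]{Milton_2006} and simply stated without argument, the derivation being deferred to Milton--Briane--Willis. Your weak-formulation route --- push the gauge through both the solution and the test function, expand $\partial_{x_l}u_k$ and $\partial_{x_j}\varphi_i$ by the product/chain rule, change variables $y=\mathbb{F}(x)$ to pick up the $1/\det(D\mathbb{F})$ factor, then read off the four tensors from the $(\nabla\mathbf{v},\nabla\pmb{\psi})$, $(\mathbf{v},\nabla\pmb{\psi})$, $(\nabla\mathbf{v},\pmb{\psi})$, $(\mathbf{v},\pmb{\psi})$ groups --- is a clean and correct derivation, and your index bookkeeping (in particular the assignment of the two cross terms to $\mathbb{F}_2^*\cc$ versus $\mathbb{F}_3^*\cc$) matches \eqref{eq:willis-push-forward-formula}. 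The regularity caveat you raise is also well taken: the formulae \eqref{eq:willis-push-forward-formula} require $D^2\mathbb{F}$, which is stronger than the Lipschitz hypothesis in the statement; the paper itself only ever applies the proposition to the piecewise-smooth Kohn transform \eqref{eq:std-near-cloak}, so the issue never bites in practice.
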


In (\ref{gauge}), the gauge is $\left[D\mathbb{F}^\top\right]^{-1}$. Remark that if one takes the gauge in (\ref{gauge}) to be the identity, then (\ref{eq:willis-model-change-of-variable-princ}) reduces to (\ref{cosserattransformed}) as shown in \cite{Norris_2011}. 
By abuse of language, we will refer to $\mathbb{F}^*\cc$ as the push-forward of $\cc$. Equation \eqref{eq:willis-model-change-of-variable-princ} is known as the Willis equation and can be achieved using perturbation techniques and variational principles in heterogeneous media \cite{Willis_1981}. Physically, the rank 3 tensors can be interpreted for instance in terms of a pre-stress and an effective body force due to the gradient of pre-stress \cite{Xiang_2016}.

\subsection{Kohn's transform and defect problems}
Following Kohn and co-authors \cite{Kohn_2008}, we fix a regularising parameter $\varepsilon>0$ and consider a Lipschitz map $\mathcal{F}_\varepsilon:\Omega\mapsto\Omega$ defined below
\begin{equation}\label{eq:std-near-cloak}
\mathcal{F}_\varepsilon(x) :=
\left\{
\begin{array}{cl}
x & \mbox{ for } x\in \Omega\setminus B_2,
\\[0.2 cm]
\left( \frac{2-2\varepsilon}{2-\varepsilon} + \frac{\vert x \vert}{2-\varepsilon}\right) \frac{x}{\vert x\vert} & \mbox{ for } x\in B_2 \setminus B_\varepsilon,
\\[0.2 cm]
\frac{x}{\varepsilon} & \mbox{ for }x\in B_\varepsilon.
\end{array}\right.
\end{equation}
Note that $\mathcal{F}_\varepsilon$ maps $B_\varepsilon$ onto $B_1$ and the annulus $B_2\setminus B_\varepsilon$ onto $B_2\setminus B_1$. Cloaking strategy with the above map corresponds to having $B_1$ as the cloaked region and the annulus $B_2\setminus B_1$ as the cloaking annulus. Remark that taking $\varepsilon=0$ in \eqref{eq:std-near-cloak} yields the map
\begin{equation}\label{eq:std-singular-cloak}
\mathcal{F}_0(x) :=
\left\{
\begin{array}{cl}
x & \mbox{ for } x\in \Omega\setminus B_2,
\\[0.2 cm]
\left( 1 + \frac12 \left\vert x \right\vert \right) \frac{x}{\vert x\vert} & \mbox{ for } x\in B_2 \setminus \{0\},
\end{array}\right.
\end{equation}
which is the singular transform of Greenleaf and coauthors \cite{greenleaf} \& Pendry and coauthors \cite{pendry}. The map $\mathcal{F}_0$ is smooth except at the point $0$. It maps $0$ to $B_1$ and $B_2\setminus\{0\}$ to $B_2\setminus B_1$.\\

\subsubsection{Construction in the Cosserat setting}
Given $0<\delta<1$, we take $\varepsilon=\sqrt[d]{\delta^2}$ and take the corresponding Kohn's regularised transform $\mathcal{F}_\varepsilon$ defined by \eqref{eq:std-near-cloak}. Then, our strategy to near-cloaking is to take $\mathbb{D}^\delta(y) = \mathcal{F}^\ast_\varepsilon\cc_0(y)$ for $y\in B_2\setminus B_1$ in the construction \eqref{eq:goal:cloak-defn} of the cloaking coefficient $\cc^\delta_\mathrm{cl}$. More precisely, we wish to consider
\begin{equation}\label{eq:cloak-coefficient-choice}
\cc^\delta_{\mathrm{cl}}(y) =
\left\{
\begin{array}{ll}
\cc_0 & \quad \mbox{ for }y\in\Omega\setminus B_2,\\[0.2 cm]
\mathcal{F}^\ast_\varepsilon\cc_0(y) & \quad \mbox{ for }y\in B_2\setminus B_1,\\[0.2 cm]
\cc_1 & \quad \mbox{ for }y\in B_1.
\end{array}\right.
\end{equation}
From the push-forward formula \eqref{eq:push-forward-formula}, it follows that the above cloaking coefficient is nothing but the push-forward of the following fourth order tensor
\begin{equation}\label{eq:cloak-coefficient-delta-equivalent}
\cc_\mathrm{cos}^\delta(x) =
\left\{
\begin{array}{ll}
\cc_0 & \quad \mbox{ for }x\in\Omega\setminus B_\varepsilon,\\[0.2 cm]
\varepsilon^{2-d}\cc_1\left(\frac{x}{\varepsilon}\right) & \quad \mbox{ for }x\in B_\varepsilon.
\end{array}\right.
\end{equation}
From the change-of-variable principle (Proposition \ref{prop:change-variable-cosserat}) for the Cosserat setting, it follows that in order to prove \eqref{eq:goal:distance-H12-norm}, it suffices to prove
\begin{align}\label{eq:delta-distance-H12-norm}
\left\Vert \mathbf{u}^\delta_\mathrm{cos} - \mathbf{u}_\mathrm{hom} \right\Vert_{[\mathrm{H}^\frac{1}{2}(\partial\Omega)]^d} \lesssim h\big(\delta\big),
\end{align}
where $\mathbf{u}^\delta_\mathrm{cos}(x)$ solves the following system with $\cc^\delta_\mathrm{cos}$ defined by \eqref{eq:cloak-coefficient-delta-equivalent} as the elasticity tensor:
\begin{equation}\label{eq:navier-delta-cosserat}
\left\{
\begin{aligned}
\nabla\cdot \left(\cc^\delta_\mathrm{cos}(x): \nabla \mathbf{u}^\delta_\mathrm{cos} \right) & = \mathbf{0} \qquad \mbox{ in }\Omega,
\\
\left(\cc^\delta_\mathrm{cos}(x) : \nabla {\mathbf u}^\delta_\mathrm{cos} \right){\mathbf n} & = \mathbf{g} \qquad \mbox{ on }\partial \Omega.
\end{aligned}\right.
\end{equation}
\subsubsection{Construction in the Willis setting}\label{ssec:willis-construction}
Given $0<\delta<1$, we take $\varepsilon=\sqrt[d]{\delta^2}$ and take the corresponding Kohn's regularised transform $\mathcal{F}_\varepsilon$ defined by \eqref{eq:std-near-cloak}. Then, our strategy to near-cloaking is to construct the following tensors using the push-forward formulae \eqref{eq:willis-push-forward-formula} for $y\in\Omega\setminus B_1$,
\begin{itemize}
\item a fourth order tensor $\mathbb{C}'(y):= \mathcal{F}^\ast_{\varepsilon 1}\mathbb{C}_0(y)$,
\item two third order tensors $\mathbb{D}'(y) := \mathcal{F}^\ast_{\varepsilon 2}\mathbb{C}_0(y)$, $\mathbb{S}'(y) := \mathcal{F}^\ast_{\varepsilon 3}\mathbb{C}_0(y)$,
\item a second order tensor $\mathbb{B}'(y):= \mathcal{F}^\ast_{\varepsilon 4}\mathbb{C}_0(y)$,
\end{itemize}
such that for $y\in B_1$, $\mathbb{C}'(y) = \cc_1$, $\mathbb{D}'=\mathbb{S}'=\mathbf{0}$, $\mathbb{B}'=\mathbf{0}$, followed by using these tensors in the following Willis system to solve for the unknown $\mathbf{u}^w_\mathrm{cl}$:
\begin{align}\label{eq:elasticity-system-willis}
\nabla \cdot \Big(\mathbb{C}'(y) : \nabla {\mathbf u}^w_\mathrm{cl} + \mathbb{D}'(y) \cdot {\mathbf u}^w_\mathrm{cl} \Big) + \mathbb{S}'(y) : \nabla {\mathbf u}^w_\mathrm{cl} + \mathbb{B}'(y) \mathbf{u}^w_\mathrm{cl} = \mathbf{0} \qquad \mbox{ in }\Omega.
\end{align}
From the push-forward formulae \eqref{eq:willis-push-forward-formula}, it follows that the above cloaking coefficients are nothing but the push-forwards of the following fourth order tensor:
\begin{equation}\label{eq:willis:cloak-coefficient-delta-equivalent}
\cc_\mathrm{wil}^\delta(x) =
\left\{
\begin{array}{ll}
\cc_0 & \quad \mbox{ for }x\in\Omega\setminus B_\varepsilon,\\[0.2 cm]
\varepsilon^{4-d}\cc_1\left(\frac{x}{\varepsilon}\right) & \quad \mbox{ for }x\in B_\varepsilon.
\end{array}\right.
\end{equation}
From the change-of-variable principle (Proposition \ref{prop:change-variable-willis}), it follows that in order to prove near-cloaking, it suffices to prove
\begin{align}\label{eq:willis-delta-distance-H12-norm}
\left\Vert \mathbf{u}^\delta_\mathrm{wil} - \mathbf{u}_\mathrm{hom} \right\Vert_{[\mathrm{H}^\frac{1}{2}(\partial\Omega)]^d} \lesssim h(\delta),
\end{align}
where $\mathbf{u}^\delta_\mathrm{wil}(x)$ solves the following system with $\cc^\delta_\mathrm{wil}$ defined by \eqref{eq:willis:cloak-coefficient-delta-equivalent} as the elasticity tensor:
\begin{equation}\label{eq:navier-delta}
\left\{
\begin{aligned}
\nabla\cdot \left(\cc^\delta_\mathrm{wil}(x): \nabla \mathbf{u}^\delta_\mathrm{wil} \right) & = \mathbf{0} \qquad \mbox{ in }\Omega,
\\
\left(\cc^\delta_\mathrm{wil}(x) : \nabla {\mathbf u}^\delta_\mathrm{wil} \right){\mathbf n} & = \mathbf{g} \qquad \mbox{ on }\partial \Omega.
\end{aligned}\right.
\end{equation}
Note that the elastic inclusions in the defect are scaled differently in Cosserat and in Willis settings. Note in particular, in dimension $d=2$, there is no high contrast in the elasticity inclusion which is in contrast with the Willis setting where there is a contrast of $\mathcal{O}(\varepsilon^2)$. Further note that application of Kohn's transform with the gauge (\ref{gauge}) to \eqref{eq:navier-delta} leads to \eqref{eq:elasticity-system-willis} with $\mathbf{g}$ as the traction datum, i.e.
$$\Big(\mathbb{C}'(y) : \nabla {\mathbf u}^w_\mathrm{cl}\Big)\mathbf{n}=\mathbf{g}\mbox{ on }\partial \Omega \; ,$$
as $\mathbb{D}'$ vanishes outside $B_2$.   

\section{Main results}\label{sec:near-cloaking-results}
Here we state and prove the main results of this paper. As explained earlier, the cloaking results -- both in the Cosserat and in the Willis settings -- essentially follow from the study of transmission elasticity systems of the type \eqref{eq:navier-transmission} with diametrically small inclusions. In \cite{Ammari_2013}, Ammari and co-authors provide an asymptotic expansion (a point-wise expression) for the solution to the transmission problem with small volume defect in terms of the solution to the homogeneous isotropic problem and in terms of the elastic moment tensors. We rephrase their result below.
\begin{theorem}\cite[Theorem 6.1 on page 4461]{Ammari_2013}\label{thm:Ammari-small-inclusion-transmission}
Let the spatial dimension be $d=2,3$. Let $\mathbf{u}_\mathrm{hom}(x)$ be the solution to the homogeneous isotropic system \eqref{eq:navier-isotropic} with Lam\'e parameters $\lambda_0$, $\mu_0$ satisfying the strong convexity condition (\ref{eq:strong-convexity-Lame-parameters}). Let $\mathbf{u}^\varepsilon(x)$ be the solution to the transmission problem \eqref{eq:navier-transmission} where the inclusion $D$ is of small diameter $\varepsilon$. Then, we have the estimate 
\begin{align}\label{eq:revise-new-thm}
\left\Vert \mathbf{u}_\mathrm{hom} - \mathbf{u}^\varepsilon \right\Vert_{[\mathrm H^\frac12(\partial\Omega)]^d} \le \, M\varepsilon^{2d-1}
\end{align}
for some constant $M$ independent of the Lam\'e parameters $\lambda_1$, $\mu_1$ in the inclusion $D$ as long as $\lambda_1\le \beta$ for some constant $\beta$. The constant $M$ being dependent on the upper bound $\beta$.
\end{theorem}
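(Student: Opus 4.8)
Since this is a restatement of \cite[Theorem~6.1]{Ammari_2013}, the plan is to follow that reference, whose argument rests on boundary layer potentials for the Lam\'e operator together with the notion of elastic moment tensor. First I would introduce the Neumann function $\mathbf{N}(\cdot,\cdot)$ of the background operator $\mathcal{L}_0:=\mu_0\Delta+(\lambda_0+\mu_0)\nabla(\nabla\cdot)$ on $\Omega$, that is, the matrix kernel solving $\mathcal{L}_0\mathbf{N}(\cdot,z)=-\delta_z\,\mathrm{Id}$ in $\Omega$ with vanishing background traction on $\partial\Omega$, normalised modulo rigid motions. Writing the small inclusion as $D=z_0+\varepsilon B$ for a fixed Lipschitz domain $B$, I would represent the solution $\mathbf{u}^\varepsilon$ of \eqref{eq:navier-transmission} as $\mathbf{u}_\mathrm{hom}$ plus a single-layer potential over $\partial D$ built from $\mathbf{N}$, and then encode the two transmission conditions on $\partial D$ as a boundary integral equation for the unknown density. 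The operator governing that equation is a compact perturbation of $\tfrac12\mathrm{Id}-\mathbb{K}^\ast$, where $\mathbb{K}^\ast$ is the elastic Neumann--Poincar\'e operator attached to the \emph{interior} Lam\'e pair $\lambda_1,\mu_1$; its invertibility on the mean-zero subspace is what makes $\mathbf{u}^\varepsilon$ representable and is the quantitative input for everything that follows.

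Next I would rescale to the unit inclusion via $\tilde x=(x-z_0)/\varepsilon$. Since the singular part of $\mathbf{N}$ is scale invariant, $\varepsilon$ then enters the rescaled integral equation on $\partial B$ only through the smooth part of $\mathbf{N}$ and through the Taylor expansion of $\mathbf{u}_\mathrm{hom}$ about $z_0$; solving iteratively and inserting the density back into the representation formula produces the asymptotic expansion of $\mathbf{u}^\varepsilon-\mathbf{u}_\mathrm{hom}$, whose successive terms are contractions of the derivatives $\partial_z^\alpha\mathbf{N}(\cdot,z_0)$ restricted to $\partial\Omega$ with the generalized elastic moment tensors $\mathbb{M}^{(k)}(\lambda_1,\mu_1;B)$ and with $\nabla^k\mathbf{u}_\mathrm{hom}(z_0)$, carrying increasing powers of $\varepsilon$. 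Truncating the expansion and bounding the remainder in $[\mathrm H^{\frac12}(\partial\Omega)]^d$ yields the estimate \eqref{eq:revise-new-thm}.

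The main obstacle --- and the only place the hypothesis $\lambda_1\le\beta$ is actually used --- is to make \emph{every} constant in the above (the norm of the inverse of the rescaled boundary operator on $\partial B$, the sizes of the moment tensors $\mathbb{M}^{(k)}$ and of their contractions, and the remainder constant) uniform with respect to $\mu_1\in(0,\infty)$, so that both the soft limit $\mu_1\to0$ and the hard limit $\mu_1\to\infty$ are admissible and the final constant depends on $\beta$ but on nothing else about the inclusion material. This is the genuinely elasticity-specific difficulty: unlike the scalar conductivity case, where the Neumann--Poincar\'e spectrum lies in $(-\tfrac12,\tfrac12]$ uniformly in the contrast, here the mapping properties of $\tfrac12\mathrm{Id}-\mathbb{K}^\ast$ degenerate as $\lambda_1\to\infty$, i.e. in the incompressible regime where the Poisson ratio of $D$ tends to $\tfrac12$, and it is precisely the cap $\lambda_1\le\beta$ that excludes this. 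I would secure these uniform-in-$\mu_1$ bounds by combining energy estimates for \eqref{eq:navier-transmission} with the explicit two-sided control of the isotropic tensor from Lemma~\ref{lem:lower-upper-bound-isotropic-tensor} and the ordering property of Lemma~\ref{lem:second-inequality-tensors}, which let one monotonise the dependence on $\mu_1$; granted these, the rest of the argument is routine bookkeeping.
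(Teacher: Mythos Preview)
The paper does not give its own proof of this statement: immediately after the theorem it writes ``The readers are referred to \cite{Ammari_2013} for the proof of the above result,'' and only records the qualitative fact that the constant $M$ is monotone increasing in $\beta$. Your plan to reproduce the layer-potential argument of \cite{Ammari_2013}---Neumann function representation, reduction to a boundary integral equation on $\partial D$, rescaling to the unit inclusion, expansion in elastic moment tensors, and then uniform-in-$\mu_1$ control of the remainder under the cap $\lambda_1\le\beta$---is therefore exactly the approach the paper defers to, and the sketch is faithful to that source.

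One caveat on your last paragraph. You propose to secure the uniform-in-$\mu_1$ bounds by ``combining energy estimates for \eqref{eq:navier-transmission} with Lemma~\ref{lem:lower-upper-bound-isotropic-tensor} and Lemma~\ref{lem:second-inequality-tensors}.'' That is not how \cite{Ammari_2013} actually obtains uniformity: there the bounds on the elastic moment tensors and on the inverse of the rescaled boundary operator come from the explicit layer-potential calculus and the spectral analysis of the elastic Neumann--Poincar\'e operator on $\partial B$, not from the elementary ordering lemmas of the present paper. Those lemmas are fine for monotonising quadratic forms, but they do not by themselves control the operator norm of $(\tfrac12\mathrm{Id}-\mathbb{K}^\ast)^{-1}$, nor do energy methods alone deliver the pointwise rate $\varepsilon^{2d-1}$---indeed, the paper's own variational argument (Theorem~\ref{thm:revise}) only reaches $\varepsilon^{d/2}$. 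So if you want the sharp exponent with uniform constants, you should stick to the layer-potential bookkeeping of \cite{Ammari_2013} rather than hybridise with energy estimates at that step.
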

The readers are referred to \cite{Ammari_2013} for the proof of the above result. As it transpires from the proof therein, the constant $M$ appearing in the above Theorem \ref{thm:Ammari-small-inclusion-transmission} is a monotone increasing function of the bound $\beta$ on the bulk modulus in the inclusion. 
\begin{rmk}
A main handicap of Theorem \ref{thm:Ammari-small-inclusion-transmission}, as discussed in the introduction, is the imposition of a bound on the first Lam\'e parameter of the inclusion. A novelty of our work lies in lifting this constraint and improving the result of Theorem \ref{thm:Ammari-small-inclusion-transmission} to include arbitrary isotropic inclusions.
\end{rmk}
Before we state our results, we comment on the NtD maps associated with the transmission problem \eqref{eq:navier-transmission} and those associated with the soft and hard inclusion limits \eqref{eq:navier-transmission-soft-inclusion}-\eqref{eq:navier-transmission-hard-inclusion}. Let the inclusion $D$ in all the aforementioned transmission problems be $B_\varepsilon$. Furthermore, let us denote the solutions to \eqref{eq:navier-transmission}, \eqref{eq:navier-transmission-soft-inclusion} and \eqref{eq:navier-transmission-hard-inclusion} by $\mathbf{u}^\varepsilon$, $\mathbf{u}^\varepsilon_0$ and $\mathbf{u}^\varepsilon_\infty$ respectively. It follows from the ordering property of the NtD maps (see Lemma \ref{lem:ordering-NtD-maps}) that
\begin{align*}
\left\langle \mathbf{u}^\varepsilon_0, \mathbf{f} \right\rangle \le \left\langle \mathbf{u}^\varepsilon, \mathbf{f} \right\rangle \le \left\langle \mathbf{u}^\varepsilon_\infty, \mathbf{f} \right\rangle
\end{align*}
for any traction datum $\mathbf{f}\in [\mathrm H^{-\frac12}(\partial\Omega)]^d$. As the solution to the small volume defect problem is sandwiched between two functions, our approach is to show that these two extremes are close to the solution of the homogeneous isotropic system. This approach is inspired by an elegant idea in the work of Friedman and Vogelius \cite{Friedman_1989}. This strategy mimics the work \cite{Kohn_2008} where scalar equations are studied and which borrows ideas from \cite{Friedman_1989} dealing with the effect of extreme inhomogeneities in small inclusions on the boundary measurements. Here is our main result which generalises Theorem \ref{thm:Ammari-small-inclusion-transmission}.
\begin{theorem}\label{thm:revise}
Let the spatial dimension be $d=2,3$. Let $\mathbf{u}_\mathrm{hom}(x)$ be the solution to the homogeneous isotropic system \eqref{eq:navier-isotropic} with Lam\'e parameters $\lambda_0$, $\mu_0$ satisfying (\ref{eq:strong-convexity-Lame-parameters}). Let $\mathbf{u}^\varepsilon(x)$ be the solution to the transmission problem \eqref{eq:navier-transmission} where the inclusion $D$ is $B_\varepsilon$. Then, we have the estimate 
\begin{align}\label{eq:revise}
\left\Vert \mathbf{u}_\mathrm{hom} - \mathbf{u}^\varepsilon \right\Vert_{[\mathrm H^\frac12(\partial\Omega)]^d} \le \, M\varepsilon^{\frac{d}{2}}
\end{align}
where constant $M$ is independent of the Lam\'e parameters $\lambda_1$, $\mu_1$ in the inclusion.
\end{theorem}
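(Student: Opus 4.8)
The plan is the one sketched just before the statement: sandwich the transmission solution, in the sense of the Neumann-to-Dirichlet (NtD) quadratic form, between the soft- and hard-inclusion limits (which are independent of $\lambda_1,\mu_1$), and show that both limits are $O(\varepsilon^{d/2})$-close to $\mathbf u_{\mathrm{hom}}$ on $\partial\Omega$. Write $\Lambda_{\cc}$ for the NtD map of elasticity tensor $\cc$, let $\cc^\varepsilon$ be the transmission tensor (equal to $\cc_0$ on $\Omega\setminus B_\varepsilon$ and to $\cc_1$ on $B_\varepsilon$), and let $\Lambda^\varepsilon_0,\Lambda^\varepsilon_\infty$ be the NtD maps of the soft and hard inclusion problems \eqref{eq:navier-transmission-soft-inclusion}--\eqref{eq:navier-transmission-hard-inclusion} with $D=B_\varepsilon$. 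By Lemma \ref{lem:ordering-NtD-maps}, applied to $\cc^\varepsilon$ and $\cc_0$ (and, for the degenerate limits $\cc_1\equiv\mathbf 0$, $\cc_1\equiv\infty$, directly via the variational characterisations of \eqref{eq:navier-transmission-soft-inclusion}--\eqref{eq:navier-transmission-hard-inclusion}, whose NtD maps are $\Lambda^\varepsilon_0,\Lambda^\varepsilon_\infty$), both $\Lambda_{\cc_0}$ and $\Lambda_{\cc^\varepsilon}$ lie between $\Lambda^\varepsilon_0$ and $\Lambda^\varepsilon_\infty$ in the quadratic-form order, so the symmetric operator $T^\varepsilon:=\Lambda_{\cc_0}-\Lambda_{\cc^\varepsilon}$ obeys $|\langle T^\varepsilon\mathbf f,\mathbf f\rangle|\le\langle R^\varepsilon\mathbf f,\mathbf f\rangle$ for all $\mathbf f\in[\mathrm H^{-1/2}(\partial\Omega)]^d$, where $R^\varepsilon:=|\Lambda^\varepsilon_\infty-\Lambda^\varepsilon_0|\ge 0$. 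Expressing the $[\mathrm H^{1/2}(\partial\Omega)]^d$-norm by duality and using the elementary bound $|\langle T^\varepsilon\mathbf g,\mathbf h\rangle|\lesssim\langle R^\varepsilon\mathbf g,\mathbf g\rangle^{1/2}\langle R^\varepsilon\mathbf h,\mathbf h\rangle^{1/2}$ (valid since $R^\varepsilon\pm T^\varepsilon\ge 0$) together with the continuity bound $\langle R^\varepsilon\mathbf h,\mathbf h\rangle\lesssim\|\mathbf h\|_{[\mathrm H^{-1/2}(\partial\Omega)]^d}^2$, one obtains $\|\mathbf u_{\mathrm{hom}}-\mathbf u^\varepsilon\|_{[\mathrm H^{1/2}(\partial\Omega)]^d}\lesssim\langle R^\varepsilon\mathbf g,\mathbf g\rangle^{1/2}$. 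Thus it remains to prove $\langle R^\varepsilon\mathbf g,\mathbf g\rangle\lesssim\varepsilon^{d}$; the square root taken here is what turns this into the exponent $d/2$ in \eqref{eq:revise}.

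Writing $E^\varepsilon_0,E^\varepsilon_\infty,E_{\mathrm{hom}}$ for the boundary compliances $\langle\Lambda\mathbf g,\mathbf g\rangle$ of the soft, hard and homogeneous problems, we have $\langle R^\varepsilon\mathbf g,\mathbf g\rangle\le|E^\varepsilon_\infty-E_{\mathrm{hom}}|+|E_{\mathrm{hom}}-E^\varepsilon_0|$, and I would bound each term by $C\varepsilon^d\|\mathbf g\|_{[\mathrm H^{-1/2}(\partial\Omega)]^d}^2$ by energy comparison, the only structural input being that $\mathbf u_{\mathrm{hom}}$ solves the constant-coefficient Navier system inside $B_2\Subset\Omega$, hence is smooth there with $\|\mathbf u_{\mathrm{hom}}\|_{C^2(\overline{B_1})}\lesssim\|\mathbf g\|_{[\mathrm H^{-1/2}(\partial\Omega)]^d}$ by interior elliptic estimates. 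For the hard term, $E^\varepsilon_\infty$ is the extremum of the homogeneous compliance functional over competitors that are rigid on $B_\varepsilon$; the comparison follows on using $\mathbf u_{\mathrm{hom}}$, cut off over $B_{2\varepsilon}\setminus B_\varepsilon$ to its frozen rigid part $\mathbf u_{\mathrm{hom}}(0)+\tfrac12\bigl(\nabla\mathbf u_{\mathrm{hom}}(0)-\nabla\mathbf u_{\mathrm{hom}}(0)^{\!\top}\bigr)x$ inside $B_\varepsilon$, as competitor: $\mathbf u_{\mathrm{hom}}$ differs from this rigid motion by $O(\varepsilon\|\mathbf g\|)$ on the annulus, so the cutoff gradient $O(\varepsilon^{-1})$ contributes there an $O(\|\mathbf g\|)$-bounded strain, hence only $O(\varepsilon^d\|\mathbf g\|^2)$ of stored energy, while the frozen part stores none. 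For the soft term, one inequality is immediate (use $\mathbf u_{\mathrm{hom}}|_{\Omega\setminus B_\varepsilon}$ as a competitor for the soft compliance); the other requires extending $\mathbf u^\varepsilon_0$ from $\Omega\setminus B_\varepsilon$ back into $B_\varepsilon$, by a rescaled Sobolev extension across the annulus whose energy cost is $\lesssim\|\mathbf e(\mathbf u^\varepsilon_0)\|_{L^2(B_{2\varepsilon}\setminus B_\varepsilon)}^2$, and then showing that this annular energy is itself $\lesssim\varepsilon^d\|\mathbf g\|^2$, i.e. that the void problem does not store an $O(1)$ fraction of its energy near the hole.

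That last point is the technical crux. I would test the equation satisfied by $\mathbf z^\varepsilon:=\mathbf u^\varepsilon_0-\mathbf u_{\mathrm{hom}}$ — which solves the Navier system in $\Omega\setminus B_\varepsilon$ with traction-free data on $\partial\Omega$ and traction $-\bigl(\cc_0:\mathbf e(\mathbf u_{\mathrm{hom}})\bigr)\mathbf m$ on $\partial B_\varepsilon$ — against $\mathbf z^\varepsilon$ itself. Because $\cc_0:\mathbf e(\mathbf u_{\mathrm{hom}})$ is divergence-free, its traction on $\partial B_\varepsilon$ pairs to zero with every rigid motion, so the resulting boundary term equals $\int_{\partial B_\varepsilon}\bigl(\cc_0:\mathbf e(\mathbf u_{\mathrm{hom}})\bigr)\mathbf m\cdot(\mathbf z^\varepsilon-\mathbf r)$ for any rigid $\mathbf r$; integrating this back into $B_\varepsilon$ (again using divergence-freeness, so there is no bulk term) bounds it by $\|\cc_0:\mathbf e(\mathbf u_{\mathrm{hom}})\|_{L^2(B_\varepsilon)}\,\bigl\|\nabla\widetilde{(\mathbf z^\varepsilon-\mathbf r)}\bigr\|_{L^2(B_\varepsilon)}\lesssim\varepsilon^{d/2}\|\mathbf g\|\,\|\mathbf e(\mathbf z^\varepsilon)\|_{L^2(\Omega\setminus B_\varepsilon)}$, where $\mathbf r$ is chosen via a Korn inequality on $B_{2\varepsilon}\setminus B_\varepsilon$ with the correct $\varepsilon$-scaling and $\widetilde{\,\cdot\,}$ denotes a rescaled extension; absorbing then gives $\|\mathbf e(\mathbf z^\varepsilon)\|_{L^2(\Omega\setminus B_\varepsilon)}^2\lesssim\varepsilon^d\|\mathbf g\|^2$, which controls the annular energy of $\mathbf u^\varepsilon_0$ since $\mathbf e(\mathbf u_{\mathrm{hom}})$ is $O(\|\mathbf g\|)$ on $B_{2\varepsilon}$. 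I expect this soft-limit energy non-concentration estimate to be the main obstacle: it is where one genuinely needs the divergence-free structure of $\cc_0:\mathbf e(\mathbf u_{\mathrm{hom}})$ (both to discard rigid motions and to integrate by parts with no bulk term) and a Korn inequality on the thin annulus behaving correctly under the dilation $x\mapsto x/\varepsilon$; the hard-limit comparison and the functional-analytic reduction of the first paragraph are comparatively routine. Note finally that nothing in this scheme sees $\lambda_1,\mu_1$, because the soft and hard limit problems are blind to the inclusion's moduli — which is exactly how the boundedness hypothesis on $\lambda_1$ in Theorem \ref{thm:Ammari-small-inclusion-transmission} is removed, at the price of the weaker rate in \eqref{eq:revise}.
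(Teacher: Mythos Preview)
Your overall strategy coincides with the paper's: sandwich the NtD map of the transmission problem between the soft and hard inclusion limits via Lemma~\ref{lem:ordering-NtD-maps}, and show each extreme is close to $\Lambda_{\cc_0}$. The paper assembles this from Corollary~\ref{cor:revise-1} (soft side) and Proposition~\ref{prop:revise-2} (hard side), but leaves the passage from the quadratic-form sandwich to the $[\mathrm H^{1/2}(\partial\Omega)]^d$-norm bound implicit; your first paragraph fills that gap cleanly, and the square-root step you identify is precisely where the exponent $d/2$ arises.

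The substantive difference is on the \emph{soft} side. The paper does not argue variationally there: Corollary~\ref{cor:revise-1} is obtained as the $\max\{d\lambda_1+2\mu_1,2\mu_1\}\to 0$ limit of Theorem~\ref{thm:Ammari-small-inclusion-transmission} (Ammari et al.), i.e.\ it inherits the layer-potential rate $\varepsilon^{2d-1}$ as a black box. Your proposal instead runs a direct energy comparison for the void problem, testing $\mathbf z^\varepsilon=\mathbf u^\varepsilon_0-\mathbf u_{\mathrm{hom}}$ against itself and exploiting that $\cc_0:\mathbf e(\mathbf u_{\mathrm{hom}})$ is divergence-free in $B_\varepsilon$ (to kill rigid motions and to write the boundary term as a bulk integral over $B_\varepsilon$), together with a rescaled Korn inequality on $B_{2\varepsilon}\setminus B_\varepsilon$ and an extension into $B_\varepsilon$ with the correct scaling. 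This yields $\|\mathbf e(\mathbf z^\varepsilon)\|_{L^2(\Omega\setminus B_\varepsilon)}^2\lesssim\varepsilon^d$ and hence the needed energy bound. The argument is correct in outline and entirely self-contained (no layer potentials), at the cost of the additional ``energy non-concentration near the void'' step that you rightly flag as the crux. On the \emph{hard} side your competitor---freezing $\mathbf u_{\mathrm{hom}}$ to its rigid part at the origin and cutting off over $B_{2\varepsilon}\setminus B_\varepsilon$---is a minor refinement of the paper's Proposition~\ref{prop:revise-2}, which works in the quotient space $\mathrm W(\Omega)$ and uses the competitor $(1-\varphi(\cdot/\varepsilon))\mathbf u_{\mathrm{hom}}$ together with a rescaled Korn inequality; the two constructions are equivalent once one recalls that elements of $\mathrm W(\Omega)$ are only defined modulo rigid motions.

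In short: same sandwich, same hard-inclusion construction, but a genuinely different (variational rather than layer-potential) route on the soft side, plus an explicit operator-theoretic reduction that the paper omits.
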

\begin{rmk}
Note that, even though Theorem \ref{thm:revise} does not impose any constraint on the first Lam\'e parameter in the inclusion, the convergence rate is weaker than that in Theorem \ref{thm:Ammari-small-inclusion-transmission} as the result in \cite{Ammari_2013} is a point-wise result and is obtained via layer potential techniques as opposed to the variational approach employed in the present work.
\end{rmk}
From Theorem \ref{thm:Ammari-small-inclusion-transmission}, we have the following corollary:
\begin{cor}\label{cor:revise-1}
Let $\mathbf{u}_\mathrm{hom}(x)$ be the solution to the homogeneous isotropic system \eqref{eq:navier-isotropic} with Lam\'e parameters $\lambda_0$, $\mu_0$ satisfying (\ref{eq:strong-convexity-Lame-parameters}). Let $\mathbf{u}^\varepsilon_0(x)$ be the solution to the soft defect problem \eqref{eq:navier-transmission-soft-inclusion} where the defect $D$ is of small diameter $\varepsilon$. Then, we have the estimate 
\begin{align*}
\left\Vert \mathbf{u}_\mathrm{hom} - \mathbf{u}^\varepsilon_0 \right\Vert_{[\mathrm H^\frac12(\partial\Omega)]^d} \lesssim\, \varepsilon^{2d-1}
\end{align*}
\end{cor}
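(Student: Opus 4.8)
The plan is to deduce the estimate from Theorem~\ref{thm:Ammari-small-inclusion-transmission} by a stability-in-the-parameters argument, using the soft-inclusion convergence of Theorem~\ref{thm:extreme-zero} to pass to the limit. The point is that the soft-defect problem \eqref{eq:navier-transmission-soft-inclusion} is the limit, as $\lambda_1,\mu_1\to 0$, of the transmission problem \eqref{eq:navier-transmission}, and that the bound in Theorem~\ref{thm:Ammari-small-inclusion-transmission} is uniform over all admissible inclusions with $\lambda_1$ bounded -- a condition which is automatically met along such a vanishing sequence, so that nothing is lost in the limit.

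Concretely, I would first fix $\beta=1$ and let $M=M(\beta)$ be the constant furnished by Theorem~\ref{thm:Ammari-small-inclusion-transmission}, so that for \emph{every} admissible pair $(\lambda_1,\mu_1)$ with $\lambda_1\le 1$ the transmission solution $\mathbf{u}^\varepsilon[\lambda_1,\mu_1]$, for the given defect $D$ of diameter $\varepsilon$, satisfies $\|\mathbf{u}_\mathrm{hom}-\mathbf{u}^\varepsilon[\lambda_1,\mu_1]\|_{[\mathrm H^{1/2}(\partial\Omega)]^d}\le M\varepsilon^{2d-1}$. Next I would choose a sequence of strongly convex pairs $(\lambda_1^{(n)},\mu_1^{(n)})$ -- say $\lambda_1^{(n)}=\mu_1^{(n)}=1/n$ -- so that $\lambda_1^{(n)}\le 1$ and $\kappa_1^{(n)}+\mu_1^{(n)}\to 0$, and set $\mathbf{u}^{\varepsilon,n}:=\mathbf{u}^\varepsilon[\lambda_1^{(n)},\mu_1^{(n)}]$; each of these obeys the uniform bound above. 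By Theorem~\ref{thm:extreme-zero}, applied with the fixed defect $D$, one has $\|\mathbf{u}^{\varepsilon,n}-\mathbf{u}^\varepsilon_0\|_{[\mathrm H^1(\Omega\setminus D)]^d}\lesssim(\kappa_1^{(n)}+\mu_1^{(n)})^{1/4}\to 0$, and hence by continuity of the trace operator $[\mathrm H^1(\Omega\setminus D)]^d\to[\mathrm H^{1/2}(\partial\Omega)]^d$ the boundary traces converge in $[\mathrm H^{1/2}(\partial\Omega)]^d$. A triangle inequality then gives $\|\mathbf{u}_\mathrm{hom}-\mathbf{u}^\varepsilon_0\|_{[\mathrm H^{1/2}(\partial\Omega)]^d}\le M\varepsilon^{2d-1}+\|\mathbf{u}^{\varepsilon,n}-\mathbf{u}^\varepsilon_0\|_{[\mathrm H^{1/2}(\partial\Omega)]^d}$, and letting $n\to\infty$ yields the claim with the same exponent $2d-1$.

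The one point requiring care -- and essentially the only obstacle -- is that the comparison of boundary traces is meaningful only once the rigid-motion gauge is fixed consistently: $\mathbf{u}^{\varepsilon,n}$ is defined on all of $\Omega$ whereas $\mathbf{u}^\varepsilon_0$ lives on $\Omega\setminus D$, and both are determined only modulo rigid displacements. I would normalise all the fields by the same gauge on $\Omega\setminus B_\varepsilon$ (vanishing mean and vanishing mean infinitesimal rotation), which is precisely the normalisation under which Theorem~\ref{thm:extreme-zero} is stated, so that the $\mathrm H^1(\Omega\setminus D)$-convergence there transfers directly to the traces on $\partial\Omega$. With this bookkeeping done the argument is routine; the substance is entirely carried by the $\lambda_1$-uniform estimate of Theorem~\ref{thm:Ammari-small-inclusion-transmission} together with the soft-inclusion limit of Theorem~\ref{thm:extreme-zero}.
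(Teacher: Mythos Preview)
Your argument is correct and follows essentially the same route as the paper: both deduce the corollary from Theorem~\ref{thm:Ammari-small-inclusion-transmission} by letting the inclusion Lam\'e parameters tend to zero and using that the constant $M$ stays bounded in this regime. You are simply more explicit than the paper in justifying the limit on the left-hand side, invoking Theorem~\ref{thm:extreme-zero} and the trace theorem where the paper just writes ``taking the limit''; your care about the rigid-motion normalisation is a reasonable precaution but not something the paper addresses.
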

The above corollary simply follows by taking the following limit in the estimate \eqref{eq:revise-new-thm}:
\begin{align*}
\lim_{\max\{d\lambda_1+2\mu_1,2\mu_1 \}\to0} \left\Vert \mathbf{u}_\mathrm{hom} - \mathbf{u}^\varepsilon \right\Vert_{[\mathrm H^\frac12(\partial\Omega)]^d} \le \lim_{\max\{d\lambda_1+2\mu_1,2\mu_1 \}\to0} M\, \varepsilon^{2d-1}
\end{align*}
because the constant $M$ stays bounded in the above limit.\\
Our goal is now to prove the following result.
\begin{proposition}\label{prop:revise-2}
Let $\mathbf{u}_\mathrm{hom}(x)$ be the solution to the homogeneous isotropic system \eqref{eq:navier-isotropic} with Lam\'e parameters $\lambda_0$, $\mu_0$ satisfying (\ref{eq:strong-convexity-Lame-parameters}). Let $\mathbf{u}^\varepsilon_\infty(x)$ be the solution to the hard defect problem \eqref{eq:navier-transmission-hard-inclusion} where the defect $D$ is of small diameter $\varepsilon$. Then, we have the estimate 
\begin{align*}
\left\Vert \mathbf{u}_\mathrm{hom} - \mathbf{u}^\varepsilon_\infty \right\Vert_{[\mathrm H^\frac12(\partial\Omega)]^d} \le\, \varepsilon^{\frac{d}{2}}
\end{align*}
\end{proposition}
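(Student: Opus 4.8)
The plan is to realise $\mathbf{u}^\varepsilon_\infty$ as a \emph{constrained} energy minimiser, to compare it with $\mathbf{u}_\mathrm{hom}$ through the quadratic structure of the Dirichlet energy, and to estimate the resulting gap by inserting a cheap test field supported in a thin annulus around $B_\varepsilon$.

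First I would reformulate \eqref{eq:navier-transmission-hard-inclusion} variationally. Extending $\mathbf{u}^\varepsilon_\infty$ inside $B_\varepsilon$ by the rigid displacement $\sum_j\alpha_j\pmb{\Psi}_j$, it becomes a field in $[\mathrm H^1(\Omega)]^d$ lying in the closed subspace $\mathcal{K}_\varepsilon:=\{\mathbf v\in[\mathrm H^1(\Omega)]^d:\ {\mathbf e}(\mathbf v)=\mathbf 0\text{ in }B_\varepsilon\}$ (recall that ${\mathbf e}(\mathbf v)=\mathbf 0$ on the connected set $B_\varepsilon$ forces $\mathbf v$ to be a rigid displacement there). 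A short integration by parts shows that the Euler--Lagrange equations of $\min_{\mathbf v\in\mathcal K_\varepsilon}\mathcal E(\mathbf v)$, with $\mathcal E(\mathbf v):=\int_\Omega(\cc_0:{\mathbf e}(\mathbf v)):{\mathbf e}(\mathbf v)\,{\mathrm d}x-2\langle\mathbf g,\mathbf v\rangle_{\partial\Omega}$, are exactly the Navier equation in $\Omega\setminus B_\varepsilon$, the traction condition on $\partial\Omega$, and the orthogonality relations \eqref{eq:hard-inclusion-orthogonality-condition} (test with $\mathbf h\in\mathcal K_\varepsilon$ supported away from $\overline{B_\varepsilon}$ for the first, then with a general $\mathbf h\in\mathcal K_\varepsilon$ for the last two); hence $\mathbf u^\varepsilon_\infty$ is this constrained minimiser. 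Since $\mathbf u_\mathrm{hom}$ is the unconstrained minimiser of $\mathcal E$ over $[\mathrm H^1(\Omega)]^d$, the linear part of the Taylor expansion of $\mathcal E$ at $\mathbf u_\mathrm{hom}$ vanishes, so $\mathcal E(\mathbf v)-\mathcal E(\mathbf u_\mathrm{hom})=\int_\Omega(\cc_0:{\mathbf e}(\mathbf v-\mathbf u_\mathrm{hom})):{\mathbf e}(\mathbf v-\mathbf u_\mathrm{hom})\,{\mathrm d}x$ for all $\mathbf v$. Specialising to $\mathbf v=\mathbf u^\varepsilon_\infty$, minimising the left side over $\mathcal K_\varepsilon$, and invoking the ellipticity \eqref{eq:ellipticity-condition} of $\cc_0$, I obtain
\[
\mathfrak a\,\|{\mathbf e}(\mathbf u^\varepsilon_\infty-\mathbf u_\mathrm{hom})\|_{L^2(\Omega)}^2\ \le\ \inf_{\mathbf v\in\mathcal K_\varepsilon}\int_\Omega(\cc_0:{\mathbf e}(\mathbf v-\mathbf u_\mathrm{hom})):{\mathbf e}(\mathbf v-\mathbf u_\mathrm{hom})\,{\mathrm d}x .
\]

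Next I would bound the infimum with a single competitor. Interior elliptic regularity for the constant-coefficient strongly elliptic system solved by $\mathbf u_\mathrm{hom}$ gives $\mathbf u_\mathrm{hom}\in C^1(\overline{B_2})$ with a bound depending only on $\lambda_0,\mu_0,\Omega$ and $\mathbf g$, and \emph{not} on $\lambda_1,\mu_1$ (the hard-defect problem does not see the inclusion parameters). Fixing a cut-off $\chi\in C^\infty(\Omega)$ with $\chi\equiv0$ on $B_\varepsilon$, $\chi\equiv1$ on $\Omega\setminus B_{2\varepsilon}$ and $|\nabla\chi|\lesssim\varepsilon^{-1}$, I take $\mathbf v:=\mathbf u_\mathrm{hom}(0)+\chi\,(\mathbf u_\mathrm{hom}-\mathbf u_\mathrm{hom}(0))\in\mathcal K_\varepsilon$, which equals the constant $\mathbf u_\mathrm{hom}(0)$ on $B_\varepsilon$. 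Then $\mathbf v-\mathbf u_\mathrm{hom}=(\chi-1)(\mathbf u_\mathrm{hom}-\mathbf u_\mathrm{hom}(0))$ is supported in $B_{2\varepsilon}$, where $|\mathbf u_\mathrm{hom}-\mathbf u_\mathrm{hom}(0)|\lesssim\varepsilon$ and $|\nabla\chi|\lesssim\varepsilon^{-1}$, so $|{\mathbf e}(\mathbf v-\mathbf u_\mathrm{hom})|\lesssim1$ there; hence this competitor contributes $\lesssim|B_{2\varepsilon}|\lesssim\varepsilon^d$, giving $\|{\mathbf e}(\mathbf u^\varepsilon_\infty-\mathbf u_\mathrm{hom})\|_{L^2(\Omega)}^2\lesssim\varepsilon^d$. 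To conclude I would choose the rigid-displacement representatives of $\mathbf u_\mathrm{hom}$ and $\mathbf u^\varepsilon_\infty$ so that their difference is $L^2(\Omega)$-orthogonal to rigid displacements, apply Korn's second inequality to get $\|\mathbf u^\varepsilon_\infty-\mathbf u_\mathrm{hom}\|_{[\mathrm H^1(\Omega)]^d}\lesssim\|{\mathbf e}(\mathbf u^\varepsilon_\infty-\mathbf u_\mathrm{hom})\|_{L^2(\Omega)}\lesssim\varepsilon^{d/2}$, and finish with the trace theorem on the Lipschitz boundary $\partial\Omega$.

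The hard part is the competitor in the transition layer $B_{2\varepsilon}\setminus B_\varepsilon$: the factor $\varepsilon^{-1}$ produced by $\nabla\chi$ must be absorbed, and it is, precisely because a constant (equivalently, the rigid part of the Taylor expansion of $\mathbf u_\mathrm{hom}$ at the origin) approximates $\mathbf u_\mathrm{hom}$ to order $\varepsilon$ on $B_{2\varepsilon}$ by $C^1$ interior regularity; this is exactly what pins the rate at $\varepsilon^{d/2}$. The bookkeeping that all constants are independent of $\lambda_1,\mu_1$ is then automatic, since neither $\mathbf u_\mathrm{hom}$ nor the hard-defect field $\mathbf u^\varepsilon_\infty$ involves the inclusion parameters.
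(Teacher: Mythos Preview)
Your proof is correct and follows essentially the same route as the paper: recognise $\mathbf{u}^\varepsilon_\infty$ as the orthogonal projection (equivalently, constrained minimiser) of $\mathbf{u}_\mathrm{hom}$ onto the subspace of fields that are rigid in $B_\varepsilon$, then bound the projection distance by inserting a cut-off competitor supported in $B_{2\varepsilon}$ and invoking interior regularity, Korn's inequality, and the trace theorem. The one cosmetic difference is your competitor $\mathbf{u}_\mathrm{hom}(0)+\chi(\mathbf{u}_\mathrm{hom}-\mathbf{u}_\mathrm{hom}(0))$, which subtracts the constant value at the origin; this lets you absorb the $\varepsilon^{-1}$ from $\nabla\chi$ directly via the $C^1$ bound $|\mathbf{u}_\mathrm{hom}-\mathbf{u}_\mathrm{hom}(0)|\lesssim\varepsilon$, whereas the paper takes $\Theta=(1-\varphi(\cdot/\varepsilon))\mathbf{u}_\mathrm{hom}$ and handles the same term by a rescaled Korn inequality on $B_{2\varepsilon}$ in the quotient space---your version is arguably cleaner, but the underlying idea is identical.
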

Before we give a proof of the above proposition, let us define the quotient space
\begin{align*}
\mathrm W(\Omega) := \mathrm H^1(\Omega)/ \mbox{Ker}\, {\mathbf e}(\cdot)
\end{align*}
as the space of classes of equivalence with respect to the relation 
\begin{align*}
{\mathbf u}\simeq {\mathbf v} \Longleftrightarrow {\mathbf u}-{\mathbf v} \in \mbox{Ker}\, {\mathbf e}(\cdot) \qquad \qquad \forall\, {\mathbf u},{\mathbf v}\in\mathrm H^1(\Omega).
\end{align*}
We denote by $\dot{\mathbf u}$ the class of equivalence represented by ${\mathbf u}$.\\
Let us recall the Korn's inequality in the quotient space $\mathrm W(\Omega)$: There exists a constant $C=C(\Omega)$ such that
\begin{align}
\left\Vert \dot{\mathbf v} \right\Vert_{\mathrm H^1(\Omega)} \le C \left\Vert {\mathbf e}(\dot{\mathbf v}) \right\Vert_{\mathrm L^2(\Omega)} \qquad \qquad \forall\, \dot{\mathbf v}\in \mathrm W(\Omega),
\end{align}
where 
\begin{align*}
\left\Vert \dot{\mathbf v} \right\Vert_{\mathrm H^1(\Omega)} := \inf_{{\mathbf r}\in\mbox{Ker}\, {\mathbf e}(\cdot)} \left\Vert {\mathbf v} + {\mathbf r} \right\Vert_{\mathrm H^1(\Omega)}
\quad
\mbox{ and }
\quad
{\mathbf e}(\dot{\mathbf v}) := {\mathbf e}({\mathbf w})\quad \mbox{ for any }{\mathbf w}\in\dot{\mathbf v}.
\end{align*}
We refer the readers to \cite{Ciarlet_2012} for further details on Korn's inequality and related topics. Thanks to the above Korn's inequality, we have the following norm on $\mathrm W(\Omega)$:
\begin{align}
\left\Vert \dot{\mathbf u} \right\Vert_{\mathrm W(\Omega)} = \left\Vert {\mathbf e}({\mathbf u}) \right\Vert_{\mathrm L^2(\Omega)} \qquad \forall\, {\mathbf u}\in\dot{\mathbf u}, \quad \dot{\mathbf u}\in\mathrm W(\Omega).
\end{align}

\begin{proof}[Proof of Proposition \ref{prop:revise-2}]
Note that the function space where we look for the solution to the hard inclusion problem \eqref{eq:navier-transmission-hard-inclusion} is the following
\begin{align}
\mathrm{W}^\varepsilon(\Omega) := \Big\{ \dot{\mathbf w}\in \mathrm W(\Omega) \mbox{ and }\dot{\mathbf w} = \dot{\mathbf 0}\, \mbox{ in }B_\varepsilon \Big\}.
\end{align}
From now onwards, we shall drop the $\dot{}$ notation for the representative of the equivalence class as long as there is no confusion.\\
Let us write the weak formulation for the homogeneous problem:
\begin{align}
\int_\Omega \big( \mathbb{C}_0:{\mathbf e}({\mathbf u}_{\mathrm{hom}}) \big) : {\mathbf e}({\mathbf w}) \, {\mathrm d}x = \int_{\partial\Omega} \mathbf{g}(x)\cdot {\mathbf w}(x)\, {\mathrm d}\sigma(x)
\qquad \forall \, {\mathbf w}\in \mathrm W(\Omega).
\end{align}
Next, we consider the weak formulation for the hard inclusion problem:
\begin{align}
\int_\Omega \big( \mathbb{C}_0:{\mathbf e}({\mathbf u}^\varepsilon_\infty) \big) : {\mathbf e}({\mathbf w}) \, {\mathrm d}x = \int_{\partial\Omega} \mathbf{g}(x)\cdot {\mathbf w}(x)\, {\mathrm d}\sigma(x)
\qquad \forall \, {\mathbf w}\in \mathrm{W}^\varepsilon(\Omega).
\end{align}
Hence we have
\begin{align}
\int_\Omega \big( \mathbb{C}_0:{\mathbf e}\left({\mathbf u}_{\mathrm{hom}} - {\mathbf u}^\varepsilon_\infty\right) \big) : {\mathbf e}({\mathbf w}) \, {\mathrm d}x = 0
\qquad \forall \, {\mathbf w}\in \mathrm{W}^\varepsilon(\Omega).
\end{align}
Because $\mathbb{C}_0$ is strongly convex and bounded, we have that ${\mathbf u}^\varepsilon_\infty$ is the projection of ${\mathbf u}_{\mathrm{hom}}$ onto the space $\mathrm{W}^\varepsilon$. Hence by Hilbert's theorem \cite{brezis}, we have
\begin{align*}
\left\Vert {\mathbf u}_{\mathrm{hom}} - {\mathbf u}^\varepsilon_\infty \right\Vert_{\mathrm W(\Omega)}
= \min_{{\mathbf v}\in \mathrm{W}^\varepsilon(\Omega)} \left\Vert {\mathbf u}_{\mathrm{hom}} - {\mathbf v} \right\Vert_{\mathrm W(\Omega)}
= \min_{{\mathbf v}\in \mathrm{W}^\varepsilon(\Omega)} \int_\Omega \left\vert {\mathbf e} \left( {\mathbf u}_{\mathrm{hom}} - {\mathbf v} \right)\right\vert^2\, {\mathrm d}x.
\end{align*}
So, the goal is to construct an element $\Theta\in\mathrm{W}^\varepsilon$ such that
\begin{align*}
\int_\Omega \left\vert {\mathbf e} \left( {\mathbf u}_{\mathrm{hom}} - \Theta \right)\right\vert^2\, {\mathrm d}x \le \varepsilon^d.
\end{align*}
To that end, let us define a smooth cut-off function
\begin{equation*}
\varphi(y)
=
\left\{
\begin{aligned}
1 & \qquad \mbox{ for }y\in B_1,
\\
0 & \qquad \mbox{ for }y\in \mathbb{R}^d\setminus B_2.
\end{aligned}
\right.
\end{equation*}
We rescale the above cut-off function as follows
\begin{equation*}
\varphi\left(\frac{x}{\varepsilon}\right)
=
\left\{
\begin{aligned}
1 & \qquad \mbox{ for }x\in B_\varepsilon,
\\
0 & \qquad \mbox{ for }x\in \mathbb{R}^d\setminus B_{2\varepsilon}.
\end{aligned}
\right.
\end{equation*}
We then define $\Theta\in\mathrm{W}^\varepsilon$ as follows
\begin{equation*}
\Theta(x) = \left( 1 - \varphi\left(\frac{x}{\varepsilon}\right) \right) {\mathbf u}_{\mathrm{hom}}(x).
\end{equation*}
Note that ${\mathbf u}_{\mathrm{hom}} - \Theta$ is supported in $B_{2\varepsilon}$. Let us compute
\begin{equation}\label{eq:sdp:proof-substitute}
\begin{aligned}
\int_\Omega \left\vert {\mathbf e} \left( {\mathbf u}_{\mathrm{hom}} - \Theta \right)\right\vert^2\, {\mathrm d}x
& =
\int\limits_{B_\varepsilon} \left\vert {\mathbf e} \left( {\mathbf u}_{\mathrm{hom}}(x)\right) \right\vert^2\, {\mathrm d}x 
+ \frac{1}{\varepsilon^2} \int\limits_{B_{2\varepsilon}\setminus B_\varepsilon} \left\vert \left[ \nabla \varphi \right]\left(\frac{x}{\varepsilon}\right) \right\vert^2  \left\vert {\mathbf u}_{\mathrm{hom}}(x) \right\vert^2\, {\mathrm d}x
\\
& \, \, + \int\limits_{B_{2\varepsilon}\setminus B_\varepsilon} \left\vert \varphi \left(\frac{x}{\varepsilon}\right) \right\vert^2  \left\vert {\mathbf e} \left( {\mathbf u}_{\mathrm{hom}}(x)\right) \right\vert^2\, {\mathrm d}x.
\end{aligned}
\end{equation}
Let us recall that ${\mathbf e}\left({\mathbf u}_{\mathrm{hom}}\right)$ belongs to $\mathrm L^\infty(\Omega)$ as it is a solution to the constant coefficient elliptic system, thanks to the elliptic regularity theory \cite{Agmon_1959, Agmon_1964} (see also \cite[section 4.7 on page 216]{Novotny_2004}). Furthermore, rescaling of the Korn's inequality yields
\begin{align*}
\int\limits_{B_{2\varepsilon}} \left\vert {\mathbf u}_{\mathrm{hom}}(x) \right\vert^2\, {\mathrm d}x \le \varepsilon^2 \int\limits_{B_{2\varepsilon}} \left\vert {\mathbf e} \left( {\mathbf u}_{\mathrm{hom}}(x)\right) \right\vert^2\, {\mathrm d}x.
\end{align*}
Hence all three terms on the right hand side of \eqref{eq:sdp:proof-substitute} are bounded by $\varepsilon^d$, thus proving the result.
\end{proof}

Now we state and prove our near-cloaking result in the Cosserat setting.
\begin{theorem}\label{thm:near-cloak-2D-cosserat}
Let the dimension $d=2,3$. Let $\mathbf{u}_\mathrm{hom}(x)$ be the solution to the homogeneous isotropic system \eqref{eq:navier-isotropic} with Lam\'e parameters $\lambda_0$, $\mu_0$ satisfying (\ref{eq:strong-convexity-Lame-parameters}). Given $0<\delta<1$, there exists a constant $M$ -- depending on $\Omega, \lambda_0, \mu_0$ -- such that
\begin{align}\label{eq:near-cloak-2D-cosserat}
\left\Vert \mathbf{u}^\delta_\mathrm{cl} - \mathbf{u}_\mathrm{hom} \right\Vert_{[\mathrm{H}^\frac{1}{2}(\partial\Omega)]^d} \le M\, \delta,
\end{align}
where $\mathbf{u}^\delta_\mathrm{cl}(x)$ solves the elasticity system \eqref{eq:navier-general} with elasticity tensor $\mathbb{C}^\delta_\mathrm{cl}(x)$ given by \eqref{eq:cloak-coefficient-choice}  with $\varepsilon=\sqrt[d]{\delta^2}$ and for any homogeneous isotropic tensor $\cc_1$ in the cloaked region $B_1$ with arbitrary Lam\'e parameters $\lambda_1$, $\mu_1$ subject to the strong convexity condition (\ref{eq:strong-convexity-Lame-parameters}).
%
\end{theorem}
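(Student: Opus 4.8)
The plan is to reduce the cloaking estimate \eqref{eq:near-cloak-2D-cosserat} to the small-inclusion transmission estimate of Theorem \ref{thm:revise} by means of the Cosserat change-of-variable principle. First I would record that, by the push-forward formula \eqref{eq:push-forward-formula}, the cloaking tensor $\cc^\delta_\mathrm{cl}$ in \eqref{eq:cloak-coefficient-choice} is exactly the push-forward $\mathcal{F}^\ast_\varepsilon\cc^\delta_\mathrm{cos}$ of the two-phase tensor $\cc^\delta_\mathrm{cos}$ defined in \eqref{eq:cloak-coefficient-delta-equivalent}, and that $\mathcal{F}_\varepsilon$ is Lipschitz, invertible, equal to the identity on $\Omega\setminus B_2$, with both $\det(D\mathcal{F}_\varepsilon)$ and $\det(D\mathcal{F}_\varepsilon^{-1})$ bounded below by a positive constant (depending on $\varepsilon$). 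Proposition \ref{prop:change-variable-cosserat} then applies and gives $\mathbf{u}^\delta_\mathrm{cl} = \mathbf{u}^\delta_\mathrm{cos}$ on $\Omega\setminus B_2$; since $\mathcal{F}_\varepsilon$ fixes $\partial\Omega$ and the coefficient is unchanged there, the traction datum $\mathbf{g}$ is preserved, so the two boundary value problems are genuinely transformed into one another and in particular $\mathbf{u}^\delta_\mathrm{cl}|_{\partial\Omega} = \mathbf{u}^\delta_\mathrm{cos}|_{\partial\Omega}$. Thus it suffices to prove \eqref{eq:delta-distance-H12-norm} with $h(\delta) = M\delta$.

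Next I would identify the problem \eqref{eq:navier-delta-cosserat} for $\mathbf{u}^\delta_\mathrm{cos}$ with the isotropic two-phase transmission problem \eqref{eq:navier-transmission}: the weak formulation of $\nabla\cdot(\cc^\delta_\mathrm{cos}:\nabla\mathbf{u}) = \mathbf{0}$ encodes exactly the continuity of the displacement and of the traction across $\partial B_\varepsilon$, so \eqref{eq:navier-delta-cosserat} is precisely \eqref{eq:navier-transmission} with inclusion $D = B_\varepsilon$, background Lamé pair $(\lambda_0,\mu_0)$, and inclusion Lamé pair $(\lambda_1',\mu_1') := (\varepsilon^{2-d}\lambda_1,\varepsilon^{2-d}\mu_1)$. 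Since $\varepsilon^{2-d} > 0$, multiplication by it preserves both $\mu > 0$ and $d\lambda + 2\mu > 0$, so $(\lambda_1',\mu_1')$ still satisfies the strong convexity condition \eqref{eq:strong-convexity-Lame-parameters} and is an admissible inclusion for Theorem \ref{thm:revise}.

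I would then invoke Theorem \ref{thm:revise} directly: it gives $\left\Vert\mathbf{u}_\mathrm{hom} - \mathbf{u}^\delta_\mathrm{cos}\right\Vert_{[\mathrm{H}^{\frac12}(\partial\Omega)]^d} \le M\varepsilon^{d/2}$ with $M$ depending only on $\Omega,\lambda_0,\mu_0$ and \emph{independent of the inclusion Lamé parameters} --- hence independent of $(\lambda_1',\mu_1')$, and therefore of both $(\lambda_1,\mu_1)$ and the $\varepsilon$-dependent rescaling. Substituting $\varepsilon = \sqrt[d]{\delta^2} = \delta^{2/d}$ yields $\varepsilon^{d/2} = \delta$, which combined with the first step gives \eqref{eq:near-cloak-2D-cosserat}.

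The substantive work is entirely carried by Theorem \ref{thm:revise}; the only point needing care here is that the uniformity of its constant must survive the singular $\varepsilon^{2-d}$ rescaling of the inclusion in dimension $d=3$, where $(\lambda_1',\mu_1')$ blow up as $\varepsilon\to 0$ (in $d=2$ there is no rescaling at all). This is exactly why the $\lambda_1$-free estimate of Theorem \ref{thm:revise}, rather than the $\lambda_1$-restricted Theorem \ref{thm:Ammari-small-inclusion-transmission}, is needed, and why the exponent $\varepsilon = \delta^{2/d}$ is chosen so that $\varepsilon^{d/2} = \delta$ in every dimension. I do not anticipate any genuine obstacle beyond this bookkeeping: the proof is an assembly of the change-of-variable principle with the uniform small-inclusion estimate.
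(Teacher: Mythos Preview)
Your proposal is correct and follows essentially the same route as the paper: reduce via Proposition \ref{prop:change-variable-cosserat} to the small-defect transmission problem \eqref{eq:navier-delta-cosserat}, then apply Theorem \ref{thm:revise} and use $\varepsilon^{d/2}=\delta$. Your write-up is in fact more explicit than the paper's, spelling out the $\varepsilon^{2-d}$ rescaling of the inclusion parameters and why the uniformity in Theorem \ref{thm:revise} (as opposed to the $\lambda_1$-bounded Theorem \ref{thm:Ammari-small-inclusion-transmission}) is essential in $d=3$.
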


\proof
Given $\delta$, take $\varepsilon = \sqrt[d]{\delta^2}$ and consider Kohn's regularised transform $\mathcal{F}_\varepsilon(x)$ defined by \eqref{eq:std-near-cloak}. From Proposition \ref{prop:change-variable-cosserat}, we know that proving the inequality \eqref{eq:near-cloak-2D-cosserat} is equivalent to proving \eqref{eq:delta-distance-H12-norm} where $\mathbf{u}^\delta_\mathrm{cos}(x)$ solves \eqref{eq:navier-delta-cosserat} with the elasticity tensor $\cc^\delta_\mathrm{cos}(x)$ defined by \eqref{eq:cloak-coefficient-delta-equivalent}. Note that the Lam\'e parameters $\lambda_1$ and $\mu_1$ in the inclusion $B_\varepsilon$ can be arbitrary in Theorem \ref{thm:revise}. Hence invoking Theorem \ref{thm:revise}, we have indeed proved the result.
\qed

Mutatis mutandis, the above proof applies in the following theorem for the Willis setting.

\begin{theorem}\label{thm:near-cloak-2D-3D-willis}
Let the dimension $d=2,3$. Let $\mathbf{u}_\mathrm{hom}(x)$ be the solution to the homogeneous isotropic system \eqref{eq:navier-isotropic} with Lam\'e parameters $\lambda_0$, $\mu_0$ satisfying (\ref{eq:strong-convexity-Lame-parameters}). Given $0<\delta<1$, there exists a constant $M$ -- depending on $\Omega, \lambda_0, \mu_0$ -- such that
\begin{align}\label{eq:near-cloak-2D-3D-willis}
\left\Vert \mathbf{u}^w_\mathrm{cl} - \mathbf{u}_\mathrm{hom} \right\Vert_{[\mathrm{H}^\frac{1}{2}(\partial\Omega)]^d} \le M\, \delta,
\end{align}
where $\mathbf{u}^w_\mathrm{cl}(x)$ solves the Willis system \eqref{eq:elasticity-system-willis} with the tensors $\mathbb{C}', \mathbb{D}', \mathbb{S}', \mathbb{B}'$ given in section \ref{sec:transformation-elasticity} for any homogeneous isotropic tensor $\cc_1$ in the cloaked region $B_1$ with arbitrary Lam\'e parameters $\lambda_1$, $\mu_1$ subject to the strong convexity condition (\ref{eq:strong-convexity-Lame-parameters}).
\end{theorem}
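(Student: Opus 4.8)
The plan is to imitate the proof of Theorem \ref{thm:near-cloak-2D-cosserat} exactly, substituting the Willis change-of-variable principle (Proposition \ref{prop:change-variable-willis}) for its Cosserat counterpart. The only point requiring care is that in the Willis setting the equivalent defect problem involves a rescaling of the inclusion by $\varepsilon^{4-d}$ rather than $\varepsilon^{2-d}$; since Theorem \ref{thm:revise} makes no assumption whatsoever on the Lam\'e parameters of the inclusion, this difference is harmless.

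Concretely, given $0<\delta<1$, I would set $\varepsilon=\sqrt[d]{\delta^2}$ and take Kohn's regularised transform $\mathcal{F}_\varepsilon$ from \eqref{eq:std-near-cloak}. By construction, the tensors $\mathbb{C}',\mathbb{D}',\mathbb{S}',\mathbb{B}'$ appearing in the Willis system \eqref{eq:elasticity-system-willis} are the push-forwards (under the formulae \eqref{eq:willis-push-forward-formula}) of the single fourth order tensor $\cc^\delta_\mathrm{wil}$ defined in \eqref{eq:willis:cloak-coefficient-delta-equivalent}, which equals $\cc_0$ outside $B_\varepsilon$ and $\varepsilon^{4-d}\cc_1(\tfrac{x}{\varepsilon})$ inside $B_\varepsilon$. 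Applying Proposition \ref{prop:change-variable-willis} with $\mathbb{F}=\mathcal{F}_\varepsilon$, the solution $\mathbf{u}^w_\mathrm{cl}$ to \eqref{eq:elasticity-system-willis} satisfies $\mathbf{u}^w_\mathrm{cl}=\mathbf{u}^\delta_\mathrm{wil}$ on $\Omega\setminus B_2$, where $\mathbf{u}^\delta_\mathrm{wil}$ solves the plain Navier system \eqref{eq:navier-delta} with elasticity tensor $\cc^\delta_\mathrm{wil}$. (One uses here that the gauge $[D\mathbb{F}^\top]^{-1}$ is the identity on $\Omega\setminus B_2$, so the traction datum $\mathbf{g}$ is preserved on $\partial\Omega$, as noted at the end of section \ref{ssec:willis-construction}.) Hence it suffices to establish \eqref{eq:willis-delta-distance-H12-norm}.

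Now \eqref{eq:navier-delta} is precisely a transmission problem of the form \eqref{eq:navier-transmission} with inclusion $D=B_\varepsilon$ of diameter $\varepsilon$ and with (rescaled, but still isotropic and strongly convex) Lam\'e parameters $\varepsilon^{4-d}\lambda_1,\varepsilon^{4-d}\mu_1$ inside $B_\varepsilon$. Since Theorem \ref{thm:revise} gives $\|\mathbf{u}_\mathrm{hom}-\mathbf{u}^\varepsilon\|_{[\mathrm H^{1/2}(\partial\Omega)]^d}\le M\varepsilon^{d/2}$ with $M$ \emph{independent} of the Lam\'e parameters of the inclusion, we obtain $\|\mathbf{u}^\delta_\mathrm{wil}-\mathbf{u}_\mathrm{hom}\|_{[\mathrm H^{1/2}(\partial\Omega)]^d}\le M\varepsilon^{d/2}=M\,\delta$ by the choice $\varepsilon=\delta^{2/d}$. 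This proves \eqref{eq:willis-delta-distance-H12-norm} with $h(\delta)=\delta$, and therefore \eqref{eq:near-cloak-2D-3D-willis}, completing the argument.

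I do not anticipate a genuine obstacle here: the heavy lifting has already been done in Theorem \ref{thm:revise} (via Proposition \ref{prop:revise-2} and Corollary \ref{cor:revise-1}), and the proof is purely a matter of invoking the Willis change-of-variable principle and bookkeeping the exponent $\varepsilon=\delta^{2/d}$. The only thing worth double-checking is that the rescaled inclusion parameters $\varepsilon^{4-d}\lambda_1,\varepsilon^{4-d}\mu_1$ still satisfy the strong convexity condition \eqref{eq:strong-convexity-Lame-parameters} — which is immediate since $\varepsilon^{4-d}>0$ — and that the constant $M$ in Theorem \ref{thm:revise} indeed does not see them, which is exactly the content of that theorem.
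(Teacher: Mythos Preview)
Your proposal is correct and follows exactly the approach the paper takes: the paper simply states that the proof of Theorem \ref{thm:near-cloak-2D-cosserat} applies \emph{mutatis mutandis}, and your write-up is precisely that argument spelled out, with the Willis change-of-variable principle (Proposition \ref{prop:change-variable-willis}) replacing its Cosserat counterpart and Theorem \ref{thm:revise} absorbing the modified inclusion scaling $\varepsilon^{4-d}$. If anything, you have been more careful than the paper in noting explicitly why the rescaled Lam\'e parameters remain strongly convex and why the traction datum is unchanged on $\partial\Omega$.
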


\section{Cloaking parameters and isotropic approximation}\label{sec:symmetrize}

\subsection{Cosserat and Willis elasticity parameters in polar coordinates}
First, we present the typical spatial distribution of the cloak parameters to illustrate the challenges faced by physicists and engineers in designing an elastodynamic cloak. We focus on the Cosserat case, and also give some results for Willis media,  
and consider Kohn's transform; we append the subscript cos to identify Cosserat, and the coefficient $\mathbb{C}_\mathrm{cos}$ depends on the regularising parameter $\varepsilon$ via the Lipschitz map $\mathcal{F}_\varepsilon$ in (\ref{eq:std-near-cloak}). 
 Consider the Lipschitz map $\mathcal{F}_\varepsilon:\Omega \mapsto \Omega$
\[
x := (x_1,x_2) \mapsto \left( \mathcal{F}_\varepsilon^{(1)}(x), \mathcal{F}_\varepsilon^{(2)}(x) \right) =: (y_1, y_2) = y.
\] 
If the cartesian coordinates $(x_1,x_2)$ are expressed in terms of  polar coordinates, $(r\cos\theta, r\sin\theta)$, then the new coordinates $(\mathcal{F}_\varepsilon^{(1)}(x), \mathcal{F}_\varepsilon^{(2)}(x))$ become $(r'\cos\theta, r'\sin\theta)$ with
\begin{equation}\label{eq:r-prime-defn}
r' :=
\left\{
\begin{array}{cl}
r & \mbox{ for } r \ge 2,
\\[0.2 cm]
\frac{2-2\varepsilon}{2-\varepsilon} + \frac{r}{2-\varepsilon} & \mbox{ for } \varepsilon < r < 2,
\\[0.2 cm]
\frac{r}{\varepsilon} & \mbox{ for } r \le \varepsilon.
\end{array}\right.
\end{equation}
We also recall that only the radial coordinate $r$ gets transformed by $\mathcal{F}_\varepsilon$ and the angular coordinate $\theta$ remains unchanged. Next, we reformulate the push-forward maps in terms of the polar coordinates.
The transformed elasticity tensor, $\mathbb{C}'_\mathrm{cos}$, has the following non-zero cylindrical components:
\begin{equation}
\begin{array}{ll}
C'_{\mathrm{cos}\, r'\!r'\!r'\!r'\!}\!=\!a(\varepsilon,r')(\lambda_0\!+\!2\mu_0), \!&
C'_{\mathrm{cos}\, \theta'\!\theta'\!\theta'\!\theta'}\!=\!\frac{1}{a(\varepsilon,r')}(\lambda_0\!+\!2\mu_0),\\[1.6mm]
C'_{\mathrm{cos}\, r'\!r'\!\theta'\!\theta'\!}=C'_{\mathrm{cos}\theta'\!\theta'\! r'\!r'}=\lambda_0, &
C'_{\mathrm{cos}\, r'\!\theta'\!\theta' \!r'}=C'_{\mathrm{cos}\theta'\! r'\!r'\!\theta'}=\mu_0, \\[1.6mm]
C'_{\mathrm{cos}\, r'\!\theta'\! r'\!\theta'}=a(\varepsilon,r') \mu_0, &
C'_{\mathrm{cos}\, \theta'\! r'\!\theta'\! r'}=\frac{1}{a(\varepsilon,r')} \mu_0,
\end{array}
\label{sc}
\end{equation}
in $B_2\setminus B_1$ while $\mathbb{C}'_{\mathrm{cos}}=\mathbb{C}_1$ in $B_1$ (i.e. inclusion) and $\mathbb{C}'_{\mathrm{cos}}=\mathbb{C}_0$ in $\Omega\setminus B_2$. For ease of notation, here $a(\varepsilon,r')$ stands for the following expression:
\[
a\left(\varepsilon,r'\right) := \frac{(\varepsilon-2)r'-2(\varepsilon-1)}{(\varepsilon-2)r'}
\qquad\qquad\qquad\mbox{ for }1\le r'\le 2.
\]
 Figure \ref{fig:num:polar-cloakCos} shows the variation of the components of the elasticity tensor $\mathbb{C}'_{\mathrm{cos}}(r')$ with respect to the radial variable.

We observe that when $\varepsilon$ gets smaller, the radial component of the elasticity tensor takes values very close to zero near the inner boundary of the cloak, which is unachievable in practice (the azimuthal component of the elasticity tensor being the inverse of the radial component, the elasticity tensor becomes extremely anisotropic). We note that $\mathbb{C}'_\mathrm{cos}$ does not have the minor symmetries, which is a further challenge for its fabrication. Therefore, manufacturing a metamaterial cloak would require a small enough value of $\varepsilon$ so that homogenization techniques can be applied to approximate the anisotropic elasticity tensor, and moreover one would need to achieve some substantial minor symmetry breaking in the elasticity tensor (so departing from classical homogenization results \cite{Camar-Eddine_2003}, possibly using some resonant elements that would make the cloaking interval narrowband at finite frequencies, in a way similar to that achieved with split ring resonators for electromagnetic waves \cite{schurig}). We will be revisiting the topic of approximate near-cloaking in the next subsection.
\begin{figure}[h!]
\mbox{}\hspace{-2cm}\includegraphics[width=16cm]{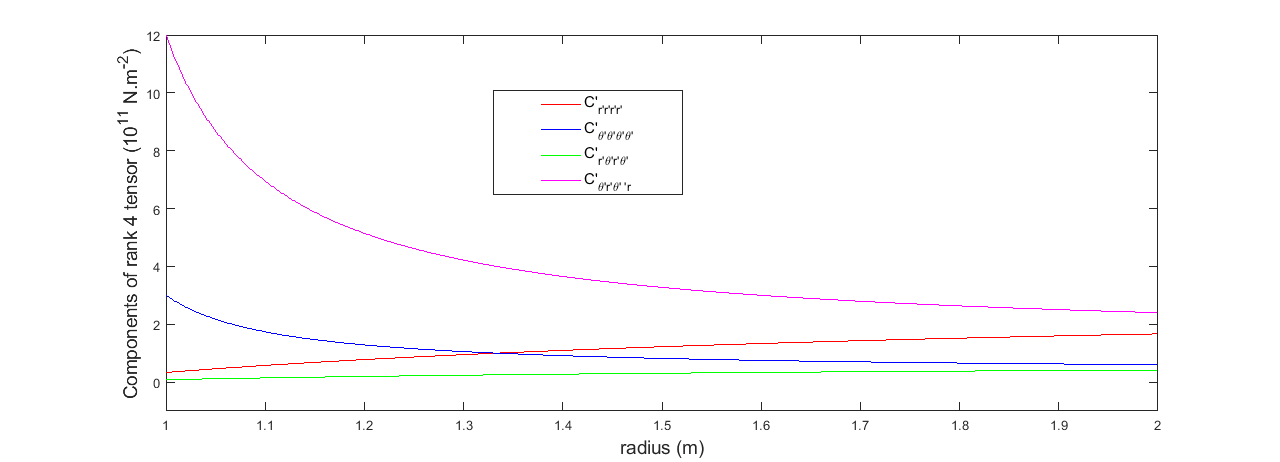}
\caption{The spatially varying components of the elasticity tensor $\mathbb{C}'_{\mathrm{cos}}(r')$ of a cylindrical Cosserat cloak from (\ref{sc}) for $r'\in[1,2]$ and $\varepsilon=0.2$ for a bulk medium with Lam\'e parameters $\lambda_0 = 1.5\times 10^{11}N.m^{-2}$ and $\mu_0 = 7.5\times 10^{10}N.m^{-2}$. One notes the substantial minor symmetry breaking for $r'=1$.}
\label{fig:num:polar-cloakCos}
\end{figure}

\begin{figure}[h!]
\mbox{}\hspace{-2cm}\includegraphics[width=16cm]{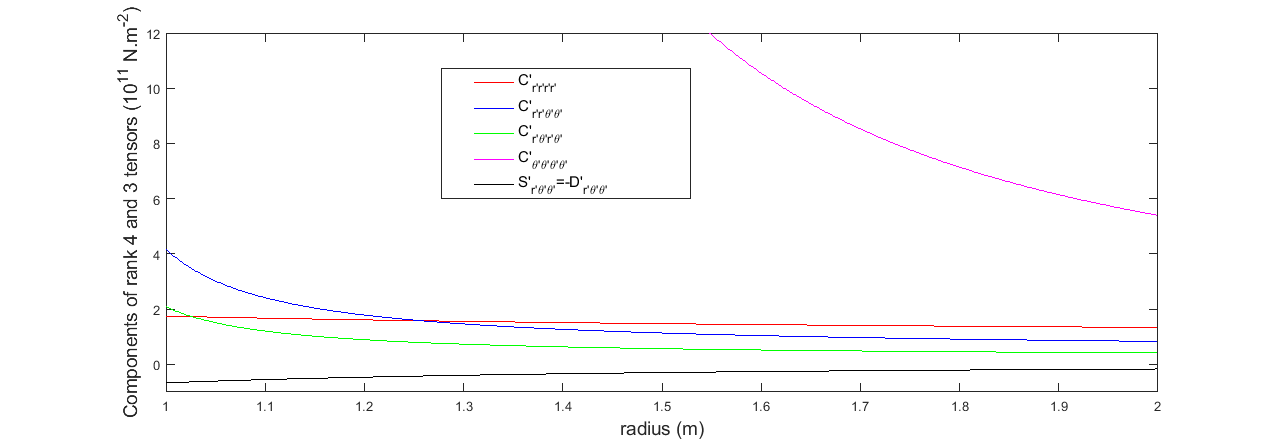}
\caption{The spatially varying components of the elasticity tensor $\mathbb{C}'_{\mathrm{will}}(r')$ and inertia tensors $\mathbb{S}'_{\mathrm{will}}(r')=-\mathbb{D}'_{\mathrm{will}}(r')$ of a cylindrical Willis cloak from (\ref{wil4}-\ref{wil5}) for $r'\in[1,2]$ and $\varepsilon=0.2$ for a bulk medium with Lam\'e parameters $\lambda_0 = 1.5\times 10^{11}N.m^{-2}$ and $\mu_0 = 7.5\times 10^{10}N.m^{-2}$. One notes the range of values for azimuthal component of $\mathbb{C}'_{\mathrm{will}}(r')$ ($[5.4\times10^{11},6.75\times10^{13}]$, out of scale) is larger than that of the
azimuthal component of $\mathbb{C}'_{\mathrm{cos}}(r')$ in Figure \ref{fig:num:polar-cloakCos}. However, $\mathbb{S}'_{\mathrm{will}}(r')$ and $\mathbb{D}'_{\mathrm{will}}(r')$ have relatively small values.}
\label{fig:num:polar-cloakWill}
\end{figure}

We now give the non-vanishing components of the rank 4 elasticity tensor in the Willis case, using the subscript wil to distinguish them from Cosserat:
\begin{equation}
\begin{array}{ll}
C'_{\mathrm{wil}\, r'\!r'\!r'\!r'\!}\!=\!\frac{r'(2-\varepsilon) + 2(\varepsilon-1)}{r'(2-\varepsilon)^3} \left( \lambda_0+2\, \mu_0 \right) , \!& \\
C'_{\mathrm{wil}\, \theta'\!\theta'\!\theta'\!\theta'}\!=\! \frac{\vert r'\vert^3(2-\varepsilon)}{{\left(r'(2-\varepsilon) + 2(\varepsilon-1)\right)}^3} \left( \lambda_0+2\, \mu_0 \right), \!& \\[1.6mm]
C'_{\mathrm{wil}\, r'\!r'\!\theta'\!\theta'\!}=C'_{\mathrm{wil}\,\theta'\!\theta'\! r'\!r'}=\frac{r'}{{(\varepsilon-2)}^2r'-2(2-\varepsilon)(\varepsilon-1)}\lambda_0, & \\
C'_{\mathrm{wil}\, r'\!\theta'\!\theta' \!r'}=C'_{\mathrm{wil}\, r'\!\theta'\! r'\!\theta'}=C'_{\mathrm{wil}\, \theta'\! r'\!\theta'\! r'}=C'_{\mathrm{wil}\, \theta'\! r'\!r'\!\theta'}
=\frac{r'}{{(\varepsilon-2)}^2r'-2(2-\varepsilon)(\varepsilon-1)}\mu_0, & \\[1.6mm]
\end{array}
\label{wil4}
\end{equation}
and the non-vanishing components of the rank 3 tensors
\begin{eqnarray}
\mathbb{S}'_{\mathrm{wil}\, r'\! r'\!r'}&=&-\frac{\lambda_0}{(r'(\varepsilon - 2) + 2 (1-\varepsilon)) (r'(\varepsilon - 2) - 2\varepsilon)}
 =-\mathbb{D}'_{\mathrm{wil}\, r'\! r'\!r'}\! ,\nonumber
\\
\mathbb{S}'_{\mathrm{wil}\, r'\theta'\!\theta'\!} &=&\mathbb{S}'_{\mathrm{wil}\, \theta'\!r' \!\theta'\!}\!=-\frac{-2\mu_0}{(r'(\varepsilon - 2) + 2 (1-\varepsilon))^2}
= -\mathbb{D}'_{\mathrm{wil}\,\theta'\!\theta'\!  r'}\!=-\mathbb{D}'_{\mathrm{wil}\, \theta'\! r'\theta'\!}\nonumber
\\
\mathbb{S}'_{\mathrm{wil}\,\theta'\!\theta'\!  r' \!} &=&
-\frac{(r'(\varepsilon - 2) - 2\varepsilon)(2\mu_0+\lambda_0)}{(r'(\varepsilon - 2) + 2 (1-\varepsilon))^3}
= -\mathbb{D}'_{\mathrm{wil}\,  r'\! \theta'\!\theta'\!}\; .\label{wil5}
\end{eqnarray}

In Figure \ref{fig:num:polar-cloakWill} we show the components of the elasticity tensor $\mathbb{C}'_{\mathrm{wil}}(r')$ and so we can compare and contrast the Cosserat and Willis media. Figures \ref{fig:num:polar-cloakCos} and \ref{fig:num:polar-cloakWill}, show that the required anisotropy of the elasticity tensor in the Willis cloak (evaluated by the ratio $C'_{\mathrm{wil}\theta'\!\theta'\!\theta'\!\theta'}(r')/C'_{\mathrm{wil}r'\!r'\!r'\!r'}(r')$) is much larger than that for the Cosserat cloak (evaluated by the ratio $C'_{\mathrm{cos}\theta'\!\theta'\!\theta'\!\theta'}(r')/C'_{\mathrm{cos}r'\!r'\!r'\!r'}(r')$), which is suggestive that a high contrast homogenization approach is needed for the Willis cloak. This could be circumvented, as the only non-vanishing component of the 3 tensors is relatively small compared with those of the elasticity tensor in Figure \ref{fig:num:polar-cloakWill}, by neglecting the 3 tensors to allow for a classical homogenization procedure or an higher order homogenization approach in the spirit of Shuvalov et al. \cite{shuvalov_2011}. This could have acceptable efficiency depending upon the applications one has in mind. For instance, two seismic cloak designs were proposed in \cite{diatta-achaoui-williscloak}, based upon the removal of 3 tensors in the transformed Willis equations (but keeping the density 2 tensor): For civil engineering applications, it is of foremost importance to fabricate a cloak which is broadband \cite{brule-handbook}, even if the cloaking efficiency is severely reduced.

\subsection{Symmetrized Cosserat cloak and its isotropic approximation}

The Cosserat cloak $\mathbb{C}'_{\mathrm{cos}}$ given in \eqref{sc} is positive definite. In fact, we have for any symmetric second order tensor $\xi=(\xi_{ij})$,
\begin{align*}
\left( \mathbb{C}'_{\mathrm{cos}} : \xi \right) : \xi & = a(\varepsilon,r')(\lambda_0+2\mu_0)\Big( \xi_{11}+\frac{\lambda_0}{a(\varepsilon,r')(\lambda_0+2\mu_0)}\xi_{22}\Big)^2 
\\
& \quad + \Big(2+a(\varepsilon, r')+\frac{1}{a(\varepsilon, r')}\Big)\mu_0\xi_{12}^2+ \frac{4\mu_0(\lambda_0+\mu_0)}{a(\varepsilon, r')(\lambda_0+2\mu_0)} \xi_{22}^2
\\
& \ge 0
\end{align*}
and that $\left( \mathbb{C}'_{\mathrm{cos}} : \xi \right) : \xi=0$ if and only if $\xi=0$.

Rather than developing an homogenization procedure to obtain an effective coefficient with no minor symmetries, we propose to symmetrise the Cosserat cloak in \eqref{sc} and then develop an approximation procedure through layered construction to approximate the aforementioned symmetrised tensor. Denoting the symmetrized Cosserat tensor by $\mathbb{C}^s_{\mathrm{cos}}$, our strategy is to take the same entries as in \eqref{sc} except for the the following four entries:
\[
C^s_{\mathrm{cos}\, r'\!\theta'\! r'\!\theta'} = C^s_{\mathrm{cos}\, r'\!\theta'\!\theta' \!r'} = C^s_{\mathrm{cos}\, \theta'\! r'\!\theta'\! r'} = C^s_{\mathrm{cos}\theta'\! r'\!r'\!\theta'} = \beta(r') \mu_0,
\]
where $\beta(r')$ is a positive function which needs to be chosen such that the minimum is attained in the following variational problem:
\begin{eqnarray}\label{eq:minimise-symmetry}
\min_{\xi\in\mathcal{M}}\{((\cc'_{\mathrm{cos}}-\cc^s_{\mathrm{cos}}):\xi):\xi\}
&=&\min_{\xi_{12}\in\R}\Big\{\Big( 2+a(\varepsilon,r')+\frac{1}{a(\varepsilon,r')}-4\beta(r')\mu\Big)\xi_{12}^2 \Big\},
\end{eqnarray}
where $\mathcal{M}$ stands for the space of real symmetric matrices. The equality in \eqref{eq:minimise-symmetry} is because
\begin{align*}
\left( \mathbb{C}^s_{\mathrm{cos}} : \xi \right) : \xi & = a(\varepsilon,r')(\lambda_0+2\mu_0)\Big( \xi_{11}+\frac{\lambda}{a(\varepsilon,r')(\lambda_0+2\mu_0)}\xi_{22}\Big)^2 
\\
& \quad + 4\beta(r')\mu\xi_{12}^2 + \frac{4\mu_0(\lambda_0+\mu_0)}{a(\varepsilon, r')(\lambda_0+2\mu_0)} \xi_{22}^2.
\end{align*}
Rather than addressing the minimisation problem in \eqref{eq:minimise-symmetry}, we single out a particular choice for $\beta$ which is easier to handle from a computational viewpoint. Note that there might be an optimal choice for the function $\beta(r')$ such that the minimum in the right hand side of \eqref{eq:minimise-symmetry} is a negative quantity. We, however, will not explore the optimal choice for $\beta$. We choose a $\beta$ such that the minimum in the right hand side of \eqref{eq:minimise-symmetry} is zero. This choice being
\[
\beta(r') = \frac{1}{4} \left( 2+a(\varepsilon,r')+\frac{1}{a(\varepsilon,r')} \right).
\]
Readers should take note that other choices of symmetrization for the Cosserat tensor have appeared in the literature \cite{Diatta-Kadic_2016, Sklan_2018}. Now that we have a symmetric, anisotropic and positive definite elasticity tensor $\mathbb{C}^s_{\mathrm{cos}}$, we can apply classical homogenization algorithms in order to achieve this tensor through isotropic layered media, for example. This is indeed the proposal of Greenleaf et al. \cite{greenleaf2008} (see also the recent work of Ghosh et al. \cite{ghosh}) in applying periodic homogenization arguments to design approximate multi-layered cloaks.
We consider an alternation of concentric layers of isotropic elasticity tensors $\cc^{(1)}$ and $\cc^{(2)}$ such that the effective elasticity tensor for this arrangement equates to the symmetrized Cosserat tensor $\mathbb{C}^s_{\mathrm{cos}}$. As hinted above, we will be considering periodic arrangement in radial direction (essentially, a one-dimensional periodic homogenization problem). Let $Y:=(0,1]$ denote the periodicity cell in the radial direction. Take $Y_1:=(0,\frac12]$ and $Y_2:=(\frac12,1]$. Denote by $\chi_1$ and $\chi_2$ their characteristic functions, i.e.
\begin{equation*}
\chi_i(\bar{r}) := \left\{
\begin{aligned}
1 & \qquad \mbox{ for }\bar{r}\in Y_i
\\
0 & \qquad \mbox{ otherwise}
\end{aligned}
\right.
\end{equation*}
for $i=1,2$. Our choice for the two-scale elasticity tensor is as follows:
\begin{equation}\label{eq:local-periodic-tensor}
\mathbb{C}(r',\bar{r}) := 
\left\{
\begin{aligned}
\mathbb{C}_0 & \qquad \mbox{ for }\vert r' \vert \le 1,
\\
\mathbb{C}^{(1)} \chi_1(\bar{r}) + \mathbb{C}^{(2)} \chi_2(\bar{r}) & \qquad \mbox{ for }1 < \vert r' \vert < 2,
\\
\mathbb{C}_1 & \qquad \mbox{ for }\vert r' \vert \ge 2,
\end{aligned}\right.
\end{equation}
with $\bar{r}\in Y$. Here the elasticity tensors $\cc_0$ and $\cc_1$ are homogeneous and isotropic of the form \eqref{eq:isotropic-elasticity-tensor} with Lam\'e parameters $\lambda_0,\mu_0$ and $\lambda_1, \mu_1$ respectively (both pairs satisfying the strong convexity condition \eqref{eq:strong-convexity-Lame-parameters}). The tensor $\cc_0$ corresponds to the background (i.e. $\Omega\setminus B_2$) and the tensor $\cc_1$ corresponds to the inclusion (i.e. $B_1$). The elasticity tensor in \eqref{eq:local-periodic-tensor} is nontrivially locally-periodic in the cloaking annulus (i.e. $B_2\setminus B_1$). The elasticity tensors $\mathbb{C}^{(1)}$ and $\mathbb{C}^{(2)}$ are isotropic (not necessarily homogeneous), i.e. their elements have the following structure:
\begin{align*}
C^{(\ell)}_{ijkl} = \lambda^{(\ell)}\, \delta_{ij}\delta_{kl} + \mu^{(\ell)} \left( \delta_{ik} \delta_{jl} + \delta_{il} \delta_{jk} \right),
\end{align*}
for $\ell=1,2$. Here $\lambda^{(\ell)}$ and $\mu^{(\ell)}$ are functions of the radial variable $r'$, to be determined later in this section. Introducing a small positive parameter $0<\tau\ll1$, we consider the following scaled elasticity tensors:
\[
\cc_\tau(r') := \cc \left( r' , \frac{r'}{\tau} \right)
\qquad \qquad \mbox{ for }\vert r' \vert \le 3.
\]
An interpretation of $tau$ is that  as it decreases, the layers become thinner, and the  number of layers increase in the annulus $B_2\setminus B_1$. Taking the scaled elasticity tensor, we shall consider the following boundary value problem:
\begin{equation*}
\left\{
\begin{aligned}
{\mathbf \nabla}\cdot \big(\cc_\tau : \nabla {\mathbf u}_\tau \big) & = \mathbf{0} \qquad \mbox{ in }\Omega,
\\
\big(\cc_\tau: \nabla {\mathbf u}_\tau\big)\mathbf{n} & = \mathbf{g} \qquad \mbox{ on }\partial \Omega.
\end{aligned}\right.
\end{equation*}
Periodic homogenization yields that ${\mathbf u}_\tau\approx {\mathbf u}^\ast$ in the $\tau\ll1$ regime where ${\mathbf u}^\ast$ solves the so-called homogenized equation:
\begin{equation*}
\left\{
\begin{aligned}
{\mathbf \nabla}\cdot \big(\cc^\ast : \nabla {\mathbf u}^\ast \big) & = \mathbf{0} \qquad \mbox{ in }\Omega,
\\
\big(\cc^\ast: \nabla {\mathbf u}^\ast\big)\mathbf{n} & = \mathbf{g} \qquad \mbox{ on }\partial \Omega,
\end{aligned}\right.
\end{equation*}
where the fourth order tensor $\cc^\ast$ is given as
\begin{equation*}
\mathbb{C}^\ast(r') := 
\left\{
\begin{aligned}
\mathbb{C}_0 & \qquad \mbox{ for }\vert r' \vert \le 1,
\\
\mathbb{C}^\ast & \qquad \mbox{ for }1 < \vert r' \vert < 2,
\\
\mathbb{C}_1 & \qquad \mbox{ for }\vert r' \vert \ge 2,
\end{aligned}\right.
\end{equation*}
where $\cc^\ast$ in the annulus $B_2\setminus B_1$ is given in terms of the Lam\'e parameters $\lambda^{(\ell)}$, $\mu^{(\ell)}$, $\ell=1,2$ as follows:
\begin{eqnarray}
C_{r'r'r'r'}^\ast &=& \frac{2\left(2\mu^{(1)}+\lambda^{(1)}\right)\left(2\mu^{(2)}+\lambda^{(2)}\right)}{2\mu^{(2)}+\lambda^{(2)}+2\mu^{(1)}+\lambda^{(1)}}; 
\nonumber\\
C_{r'\theta'r'\theta'}^\ast&=& \frac{2\mu^{(1)}\mu^{(2)}}{\mu^{(1)}+\mu^{(2)}};
\nonumber\\
C_{r'r'\theta'\theta'}^\ast &=& \frac{2\left(\mu^{(2)}\lambda^{(1)}+\lambda^{(1)}\lambda^{(2)}+\mu^{(1)}\lambda^{(2)}\right)}{2\mu^{(2)}+\lambda^{(2)}+2\mu^{(1)}+\lambda^{(1)}};
\nonumber\\
C_{\theta'\theta'\theta'\theta'}^\ast &=& \frac{2\left(2\mu^{(1)}\vert \lambda^{(1)}\vert^2\mu^{(2)}+\mu^{(1)}\vert\lambda^{(1)}\vert^2\lambda^{(2)}+2\vert\mu^{(1)}\vert^2\mu^{(2)}\lambda^{(1)}+\vert\mu^{(1)}\vert^2\lambda^{(1)}\lambda^{(2)}\right)}{\left(2\mu^{(1)}+\lambda^{(1)}\right)\left(2\mu^{(2)}+\lambda^{(2)}\right)}\nonumber\\
&+& \frac{2\left(2\mu^{(2)}\vert\lambda^{(2)}\vert^2\mu^{(1)}+\mu^{(2)}\vert\lambda^{(2)}\vert^2\lambda^{(1)}+2\vert\mu^{(2)}\vert^2\mu^{(1)}\lambda^{(2)}+\vert\mu^{(2)}\vert^2\lambda^{(1)}\lambda^{(2)}\right)}{\left(2\mu^{(1)}+\lambda^{(1)}\right)\left(2\mu^{(2)}+\lambda^{(2)}\right)}
\nonumber\\
&+&\frac{2\left(\mu^{(2)}\lambda^{(1)}+\lambda^{(1)}\lambda^{(2)}+\mu^{(1)}\lambda^{(2)}\right)^2}{\left(2\mu^{(2)}+\lambda^{(2)}+2\mu^{(1)}+\lambda^{(1)}\right)\left(2\mu^{(1)}+\lambda^{(1)}\right)\left(2\mu^{(2)}+\lambda^{(2)}\right)}.
\label{eq:backus}
\end{eqnarray}
In deriving these formulae one could refer to the effective formulas given by Backus \cite{backus} (see also the work of Patricio et al. \cite{patricio} on layered materials, and the books \cite{Allaire_2002, jikov} for a pedagogical exposition). 
The objective of this section is to derive expressions for $\lambda^{(\ell)}$ and $\mu^{(\ell)}$, $\ell=1,2$ such that
\[
\cc^\ast = \cc^s_{\mathrm{cos}},
\]
i.e. the homogenized tensor should coincide with the symmetrised Cosserat tensor. One can solve for the Lam\'e parameters to obtain the following expressions:
\begin{eqnarray}
\mu^{(1)}&=&\frac{\mu\mu^{(2)}\left(2a+\vert a\vert^2+1\right)}{8a\mu^{(2)}-2\mu a-\mu \vert a\vert^2-\mu}
\nonumber\\
\lambda^{(1)}&=& \frac{2\mu^{(2)}\left(6\mu \vert a\vert^2\mu^{(2)}-\lambda \mu \vert a\vert^2+4\lambda \vert a\vert^2\mu^{(2)}-4a\mu\mu^{(2)}-2a\lambda\mu -2\mu\mu^{(2)}-\lambda\mu\right)}{\left(-8a\mu^{(2)}+2a\mu +\mu \vert a\vert^2+\mu\right)\left(\lambda a+2\mu a-2\mu^{(2)}-\lambda\right)}
\nonumber\\
\lambda^{(2)}&=&2\mu^{(2)}\left(6\mu \vert a\vert^2 \mu^{(2)}+\lambda \mu \vert a\vert^2-4\mu a\mu^{(2)}+2\lambda\mu a-8\lambda a \mu^{(2)}-2\mu\mu^{(2)}+\lambda \mu+4\lambda \vert a\vert^2\mu^{(2)}\right)
\nonumber\\
& &\qquad \quad \times \Big(-14\mu \vert a\vert^2\mu^{(2)}+\lambda \mu \vert a\vert^2-\lambda \mu a+4\mu a\mu^{(2)}+8\lambda a\mu^{(2)}+2\mu \mu^{(2)}
\nonumber\\
& & \qquad \qquad \qquad   - \lambda\mu-8\lambda \vert a\vert^2\mu^{(2)}+\vert a\vert^3 \lambda \mu+4\vert \mu\vert^2 \vert a\vert^2+2\vert \mu\vert^2 \vert a\vert^3+2\vert\mu\vert^2 a\Big)^{-1};\label{eq:formulae-lame}
\end{eqnarray}
where $\mu^{(2)}$ is a solution of a degree $6$ polynomial with coefficients which are, themselves, polynomial combinations of $a,~\mu$ and $\lambda$. Note that all these Lam\'e parameters depend on the radial variable $r'$ and the regularization parameter $\varepsilon$ through the function $a(\varepsilon, r')$. To lighten the already daunting expressions in \eqref{eq:formulae-lame}, we do not make explicit the dependence upon this variable. In the next section (see Figure \ref{fg:plot}), we shall provide some numerical evidence in support of the isotropic approximation detailed here.

\section{Numerical simulations}\label{sec:numerics}
This section concerns some numerical tests that we have performed in support of the near-cloaking theory for the Navier systems in two spatial dimensions ($d=2$). Broadly speaking, our numerical tests concern the soft inclusion limit (Theorem \ref{thm:extreme-zero}), the hard inclusion limit (Theorem \ref{thm:extreme-infty}), the near-cloaking result in two dimensions and in the Cosserat setting (Theorem \ref{thm:near-cloak-2D-cosserat}) and some numerical tests in elastodynamics.

\subsection{Numerical tests in elastostatics}

We take the domain $\Omega$ to be a {\bf disk} of radius $10\, m$ and the inclusion $D$ to be one of the following: {\bf disk}, {\bf ellipse} and {\bf rectangle}.

For numerical convenience, we perform numerics on the Navier systems by providing Dirichlet data at the outer boundary $\partial\Omega$ rather than Neumann.
\begin{equation}\label{eq:numerics:navier-Dirichlet}
\left\{
\begin{aligned}
\nabla\cdot \big(\cc: \nabla {\mathbf u} \big) & = \mathbf{0} \qquad \mbox{ in }\Omega,
\\
{\mathbf u} & = \mathbf{f} \qquad \mbox{ on }\partial \Omega.
\end{aligned}\right.
\end{equation}
The Dirichlet datum that we have chosen is the following radial source:
\begin{align}\label{eq:numerics:radial-source}
\mathbf{f} = \left( \cos\theta, \sin\theta \right) \qquad \mbox{ on }\partial\Omega.
\end{align}
We have taken the following structure for the elasticity tensor $\cc$ in \eqref{eq:numerics:navier-Dirichlet}:
\begin{equation}\label{eq:numerics:elasticity-tensor}
\cc(x) =
\left\{
\begin{array}{ll}
\cc_0 & \quad \mbox{ for }x\in\Omega\setminus D,\\[0.2 cm]
\cc_1 & \quad \mbox{ for }x\in D.
\end{array}\right.
\end{equation}
Here $\cc_0$ and $\cc_1$ are isotropic homogeneous elasticity tensors with Lam\'e parameters $\lambda_0, \mu_0$ and $\lambda_1, \mu_1$ respectively. We choose the Lam\'e parameters in the background $\Omega\setminus D$ to be
\[
\lambda_0 = 1.5\times 10^{11}N.m^{-2}
\qquad
\qquad
\mu_0 = 7.5\times 10^{10}N.m^{-2}
\]
corresponding to Steel and we pick the Lam\'e parameters in the inclusion $D$ to be 
\[
\lambda_1 = 5.1\times 10^{10}N.m^{-2}
\qquad
\qquad
\mu_1 = 2.6\times 10^{10}N.m^{-2}
\]
corresponding to Aluminium.

The elasticity system \eqref{eq:numerics:navier-Dirichlet} with the above choice of isotropic background and isotropic inclusions is nothing but a transmission problem similar to \eqref{eq:navier-transmission} where one considers continuity of the displacement fields and the fluxes at the boundary of the inclusion $\partial D$. We solve \eqref{eq:numerics:navier-Dirichlet} for the unknown displacement field $\mathbf{u}(x)$ using finite elements in the COMSOL multiphysics package.

We start with a numerical study, which supports the theoretical findings of Theorem \ref{thm:revise} on the closeness of the solution $\mathbf{u}^\varepsilon$ of a small volume defect problem to the solution $\mathbf{u}_{\mathrm{hom}}$ of an unperturbed (homogeneous) problem. Our finite element computations for the quantification of the impact of a defect on an elastostatic field within a closed, constrained cavity, are shown in Table \ref{tab:disk:hardstuff}, and are in good agreement with the estimate
$
\left\Vert \mathbf{u}_\mathrm{hom} - \mathbf{u}^\varepsilon \right\Vert_{[\mathrm L^2(\partial\Omega)]^d} \le M\, \eta
$
which is valid for some constant $M$ independent of the Lam\'e parameters $\lambda_1$, $\mu_1$ in the inclusion $D$. We further show in Figure \ref{fg:plot} some plots of the displacement field in a homogeneous cavity, the same cavity with a defect, the same cavity with a defect of small diameter, the same cavity with a defect surrounded by an elastodynamic cloak (Cosserat), and the same cavity surrounded by 20 isotropic homogeneous layers approximating a symmetrized Cosserat cloak. We have numerically checked that the difference in $L^2$ norm on the boundary $\partial\Omega$ between the displacement field for the Cosserat cloak and the one for its approximate version decreases as it should when one increases the number of layers in the approximate cloak from $20$ to $40$. Further theoretical and numerical study is needed to check whether or not this difference behaves asymptotically like $1/N$, when the number of layers $N$ becomes large. In the conductivity case, it was numerically shown in \cite{petiteau14} that this is indeed the case, but the Cosserat cloak symmetrization might worsen the error estimate.

This illustrates the fact that the defect does not perturb the displacement field when it is either cloaked or small enough.

\begin{table}
\begin{tabular}{|l|l|l|l|c|c|c|c|c|c|r|r|r|r|}
   \hline
   &  \multicolumn{2}{|l}{Disks of decreasing area $\eta^2\pi$ with large contrast} & \\
   \hline
   &$\mu_1/\mu_0=\lambda_1/\lambda_0=10^2$ & $\mu_1/\mu_0=10^5$, $\lambda_1/\lambda_0=10^2$ & radius \\
   \hline
 & 5.2434752  & 6.8521241 &$\eta =1$ \\
${\Vert \mathbf{u}_\mathrm{hom} - \mathbf{u}_\mathrm{tr}\Vert}_{\mathrm L^2(\Omega\setminus B_1)}$ 
  & 1.1534421  & 1.2122752 & $\eta =10 ^{-1}$   \\
   & 1.123534 $\times 10^{-1}$  &  1.135342 $\times 10^{-1}$ & $\eta =10 ^{-2}$ \\
 & 1.12552 $\times 10^{-2}$  & 1.13777 $\times 10^{-2}$ & $\eta =10 ^{-3}$  \\
  & 1.1284 $\times 10^{-3}$  & 1.1391 $\times 10^{-3}$ & $\eta =10 ^{-4}$\\
  \hline
\end{tabular}
\caption{Distance  $\Vert \mathbf{u}_\mathrm{hom} - \mathbf{u}_\mathrm{tr} \Vert_{\mathrm L^2(\Omega\setminus B_1)}$  versus the radius of the small defect $\eta =10^{-m}$  $m=0,1,\cdots, 4,$  for  circular inclusions of large contrasts and decreasing area.}
\label{tab:disk:hardstuff}
\end{table}

\begin{figure}[htbp]
\resizebox{130mm}{!}{\includegraphics{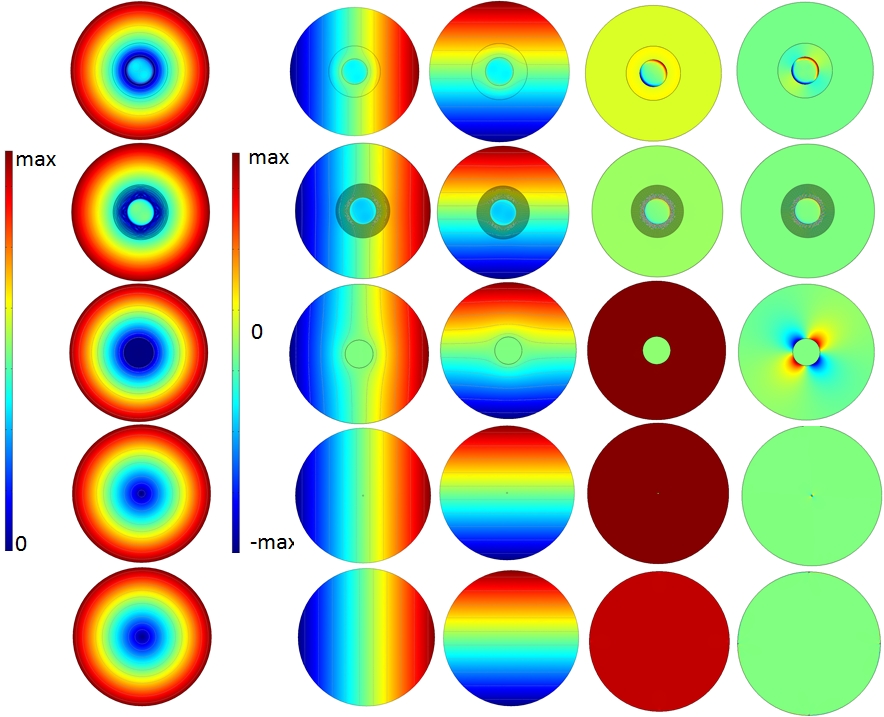}}
\caption{Plots of magnitude of displacement field $\sqrt{u_1^2+u_2^2}$ (first column), the horizontal displacement $u_1$ (second column), the vertical displacement $u_2$ (third column), the dilational strain $1/2(\partial u_1/\partial x_1+\partial u_2/\partial x_2)$ (fourth column) and the shear strain $1/2(\partial u_2/\partial x_1+\partial u_1/\partial x_2)$ (fifth column); First row: defect of radius $2\, m$ with Lam\'e parameters such that $\lambda_1/\lambda_0=10^2$ and $\mu_1/\mu_0=10^5$ surrounded by a Cosserat cloak of radii $2\, m$ and $4\, m$ in a cavity of radius $10\, m$ with an applied radial force on the boundary; Second row: same for an approximation of the Cosserat cloak with 20 isotropic homogeneous layers of identical thickness $0.1\, m$ with a symmetrized elasticity tensor as described in Section \ref{sec:symmetrize}; Third row: defect only; Fourth row: same for a small defect of radius $0.2\, m$; Fifth row: same without defect. Note we choose defects of radii $2\, m$ and $0.2\, m$ to enhance the discrepancies between plots.}
\label{fg:plot}
\end{figure}

To study the soft and hard inclusion limits, we will consider the following sequence of elasticity tensors
\begin{equation}\label{eq:numerics:elasticity-tensor-eta}
\cc^\eta(x) =
\left\{
\begin{array}{ll}
\cc_0 & \quad \mbox{ for }x\in\Omega\setminus D,\\[0.2 cm]
\eta\, \cc_1 & \quad \mbox{ for }x\in D,
\end{array}\right.
\end{equation}
with $\eta$ being a real parameter. We denote by $\mathbf{u}^\eta(x)$, the solution to the equation \eqref{eq:numerics:navier-Dirichlet} with elasticity tensor $\cc^\eta(x)$. We study the soft and hard inclusion limits while the parameters take extreme values (both in the regimes $\eta\ll1$ and $\eta\gg1$). The limit problems of interest are
\begin{equation}\label{eq:numerics:soft-hard-limits}
\left\{
\begin{aligned}
\nabla\cdot \big(\cc_0 : \nabla {\mathbf u}_0 \big) & = \mathbf{0} & \mbox{ in }\Omega\setminus D,
\\
\frac{\partial{\mathbf u}_0}{\partial {\mathbf m}}\Big|_+ & = \mathbf{0} & \mbox{ on }\partial D,
\\
\mathbf{u}_0 & = \mathbf{f} & \mbox{ on }\partial \Omega,
\end{aligned}\right.
\qquad \qquad 
\left\{
\begin{aligned}
\nabla\cdot \big(\cc_0 : \nabla {\mathbf u}_\infty \big) & = \mathbf{0} & \mbox{ in }\Omega\setminus D,
\\
{\mathbf u}_\infty & = \sum_{j=1}^{3} \alpha_j \pmb{\Psi}_j & \mbox{ on }\partial D,
\\
\mathbf{u}_\infty & = \mathbf{f} & \mbox{ on }\partial \Omega,
\end{aligned}\right.
\end{equation}
where $\{\pmb{\Psi}_j\}_{j=1}^3$ forms the basis functions in the space of rigid displacements in $D$. The coefficients $\alpha_j$ are determined by the orthogonality condition \eqref{eq:hard-inclusion-orthogonality-condition}. We numerically illustrate the asymptotic behaviour of $\left\Vert \mathbf{u}^\eta - \mathbf{u}_0 \right\Vert_{\mathrm L^2(\Omega\setminus D)}$ (resp. $\left\Vert \mathbf{u}^\eta - \mathbf{u}_\infty \right\Vert_{\mathrm L^2(\Omega\setminus D)}$) in terms of $\eta$ in the $\eta\ll1$ regime (resp. in the $\eta\gg1$ regime). We perform the asymptotic analysis on a series of geometric shapes while keeping their area constant and equal to $\pi$. This class of geometric shapes are (i) ellipses with varying semi-axes; (ii) rectangles of varying sides. The results are reported in tables and illustrated in graphs. We only compute the $\mathrm L^2$-norms of the difference as opposed to the $\mathrm H^1$-norms of Theorems \ref{thm:extreme-zero}, \ref{thm:extreme-infty}.

Computing solutions to \eqref{eq:numerics:navier-Dirichlet} in the regimes $\eta\ll1$ and $\eta\gg1$ corresponds to working with high-contrast problems and it is therefore necessary to mesh very finely the inclusion $D$ in order to achieve an accurate numerical solution. The case of hard inclusions is much more challenging numerically since the boundary condition at the inclusion for the limiting problem is unconventional in the sense that the Dirichlet data for $\mathbf{u}_\infty$ at the inner boundary $\partial D$ is given via the normalisation \eqref{eq:hard-inclusion-orthogonality-condition}. Regarding the mesh of the computational domain $\Omega$, we have numerically checked that all our results are fully converged, i.e., the numerical values for both $\left\Vert \mathbf{u}^\eta - \mathbf{u}_0 \right\Vert_{\mathrm L^2}$ and $\left\Vert \mathbf{u}^\eta - \mathbf{u}_\infty \right\Vert_{\mathrm L^2}$ do not change with further refinement in the mesh when we have about $840$ thousand triangles and $2$ thousand edges.\\

\item{\bf Ellipses:} 
We choose a series of ellipses of constant area $\pi,$ centered at the origin,  with semi-axes $a$ and $b$ taking the values $(a,b) =(1,1),$  $(2,1/2),$ $(4,1/4),$ $(7,1/7).$   The results in the soft inclusion regime are reported on Table \ref{tab:ellipses:soft:static} and in Figure \ref{fg:ellipses:soft:static}. We can deduce from the Log-Log plot in Figure \ref{fg:ellipses:soft:static} that
\begin{align}\label{eq:numerics:soft-asymptote}
\left\Vert \mathbf{u}^\eta - \mathbf{u}_0 \right\Vert_{\mathrm L^2(\Omega\setminus D)} \sim \mathcal{B}(D)\, \eta,
\end{align}
in the regime $\eta\ll1$, for some constant $\mathcal{B}(D)$ that depends on the shape of the elliptic inclusion $D$. The larger the eccentricity in the ellipse $D$, the larger the constant $\mathcal{B}(D)$ which can be up to $100$ times larger when the eccentricity is $7$ times larger.

The results in the case of hard inclusions are reported on Table \ref{tab:ellipses:hard:static} and in Figure \ref{fg:ellipses:hard:static}. We can infer from the Log-Log plot in Figure \ref{fg:ellipses:hard:static} that
\begin{align}\label{eq:numerics:hard-asymptote}
\left\Vert \mathbf{u}^\eta - \mathbf{u}_\infty \right\Vert_{\mathrm L^2(\Omega\setminus D)} \sim \mathcal{B}(D)\, \frac{1}{\eta},
\end{align}
in the regime $\eta\gg1$, for some constant $\mathcal{B}(D)$ that depends on the shape of the elliptic inclusion. 
\begin{table}[h!]
\centering
\begin{tabular}{|l|l|l|l|c|c|c|c|c|c|r|r|r|r|}
   \hline
   \multicolumn{4}{|l|}{$\left\Vert \mathbf{u}^\eta -  \mathbf{u}_0 \right\Vert_{\mathrm L^2(\Omega\setminus D)}$ for ellipses of constant area $\pi=  a\times b\times \pi$, ~ $b=1/a$}  &\\
   \hline
  $a=1$  & $a=2$ & $a=4$ & $a=7$ & Parameter $\eta$ \\
   \hline  
 6.94129  & 1.91765{$\times 10 ^{1}$} &  6.615782{$\times 10 ^{1}$}   & 1.2855071{$\times 10 ^{2}$} &$\eta =1$  \\
  1.25523  & 6.11473 & 3.717119{$\times 10 ^{1}$}   &9.224144{$\times 10 ^{1}$} & $\eta =10 ^{-1}$   \\
 1.3656{$\times 10 ^{-1}$}   & 7.8054{$\times 10 ^{-1}$} & 6.89548   & 2.41045{$\times 10 ^{1}$}&  $\eta =10 ^{-2}$ \\
1.378{$\times 10 ^{-2}$} &8.027{$\times 10 ^{-2}$} &   7.5399{$\times 10 ^{-1}$}  & 2.87416 &  $\eta =10 ^{-3}$  \\
 
1.38{$\times 10 ^{-3}$}  & 8.05{$\times 10 ^{-3}$} &   7.609{$\times 10 ^{-2}$}   & 2.9306{$\times 10 ^{-1}$} &  $\eta =10 ^{-4}$\\
 1.37903{$\times 10 ^{-4}$} & 8.06532{$\times 10 ^{-4}$}  &   7.6{$\times 10 ^{-3}$}  & 2.937{$\times 10 ^{-2}$} &  $\eta =10 ^{-5}$  \\
   \hline
\end{tabular}
\caption{Distance $\left\Vert \mathbf{u}^\eta - \mathbf{u}_0 \right\Vert_{\mathrm L^2(\Omega\setminus D)}$ versus the parameter $\eta =10^{-m}$ with $m=0,1,\dots, 5$ for soft elliptical inclusions of various semi-axes $a$, $b=\frac{1}{a}$ and constant area $\pi$.}
\label{tab:ellipses:soft:static}
\end{table}

\begin{figure}[htbp]
\resizebox{130mm}{!}{\includegraphics{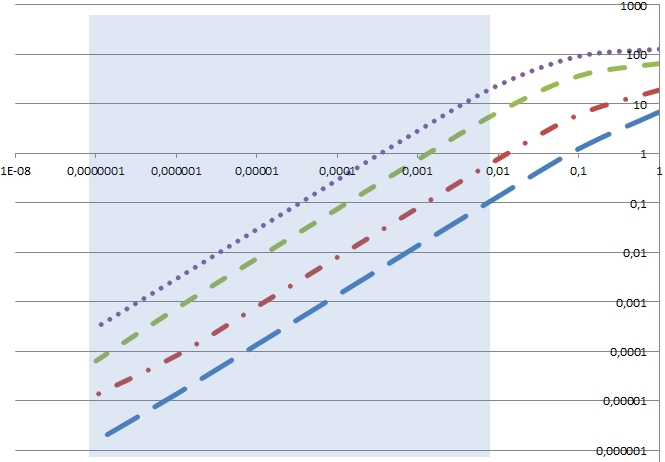}}
\caption{Curves, in Log-Log scale, of $\left\Vert \mathbf{u}^\eta -  \mathbf{u}_0 \right\Vert_{\mathrm L^2(\Omega\setminus D)}$ versus $\eta$, for $\eta =10^{-m}$ with $m=0,1,\dots,7$ in the cases where the inclusion is an ellipse of (constant) area $\pi$ and eccentricities $(1,1)$, $(2,\frac{1}{2})$, $(4,\frac{1}{4})$ and $(7,\frac{1}{7})$. The light blue shaded zone corresponds to the fully converged numerical values. The light blue shaded zone corresponds to fully converged numerical values. That is, when Log($\eta$) enters such a zone, the linear dependence of $\left\Vert \mathbf{u}^\eta -  \mathbf{u}_0 \right\Vert_{\mathrm L^2(\Omega\setminus D)}$ in $\eta$ holds.}
\label{fg:ellipses:soft:static}
\end{figure}

\begin{table}
\begin{tabular}{|l|l|l|l|c|c|c|c|c|c|r|r|r|r|}
   \hline
  \multicolumn{4}{|l|}{ $\left\Vert \mathbf{u}^\eta - \mathbf{u}_\infty \right\Vert_{\mathrm L^2(\Omega\setminus D)}$ for ellipses of constant area $\pi=  a\times b\times \pi$, ~ $b=1/a$}  &
\\
   \hline
  $a=1$  & $a=2$ & $a=4$ & $a=7$ & Parameter $\eta$
\\
   \hline  
  7.03416  &7.91058  &   1.15560{$\times 10 ^{1}$}  & 2.266599{$\times 10 ^{1}$}  & $\eta =1$ 
 \\
  1.28593   &  2.4304 &   9.60526   & 2.277612{$\times 10 ^{1}$} & $\eta =10 $  
 \\
 1.402{$\times 10 ^{-1}$}  &  3.8416{$\times 10 ^{-1}$} &   2.8943   & 1.308871{$\times 10 ^{1}$} & $\eta =10 ^{2}$
 \\
1.415{$\times 10 ^{-2}$}  & 4.072{$\times 10 ^{-2}$} &   3.6012{$\times 10 ^{-1}$}   & 2.4658 & $\eta =10 ^{3}$  
\\
 1.42{$\times 10 ^{-3}$}  & 4.1{$\times 10 ^{-3}$} &   3.691{$\times 10 ^{-2}$}  & 2.7046{$\times 10 ^{-1}$} & $\eta =10 ^{4}$
\\
  1.4165{\; $\times 10 ^{-4}$}  & 4.1111{\; $\times 10 ^{-4}$} &   3.7 $\times 10 ^{-3}$   & 2.731$\times 10 ^{-2}$ & $\eta =10 ^{5}$  \\
\hline
\end{tabular}
\caption{Distance $\left\Vert \mathbf{u}^\eta - \mathbf{u}_\infty \right\Vert_{\mathrm L^2(\Omega\setminus D)}$ versus the parameter $\eta =10^{m}$ with $m=0,1,\dots, 5$ for hard elliptical inclusions of various semi-axes $a$, $b=\frac{1}{a}$ and constant area $\pi$.}
\label{tab:ellipses:hard:static}
\end{table}

\begin{figure}[h!]
\resizebox{130mm}{!}{\includegraphics{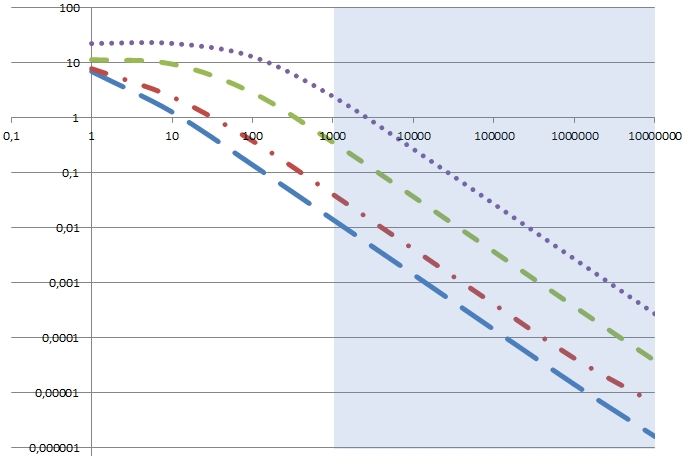}}
\caption{Curves, in Log-Log scale, of $\left\Vert \mathbf{u}^\eta -  \mathbf{u}_\infty \right\Vert_{\mathrm L^2(\Omega\setminus D)}$ versus $\eta$, for $\eta =10^{m}$ with $m=0,1,\dots,8$ in the cases where the inclusion is an ellipse of (constant) area $\pi$ and eccentricities $(1,1)$, $(2,\frac{1}{2})$, $(4,\frac{1}{4})$ and $(7,\frac{1}{7})$. The light blue shaded zone corresponds to the fully converged numerical values.}
\label{fg:ellipses:hard:static}
\end{figure}

In the work \cite{Ammari_2013}, the above convergence rates are given to be $\eta^{1/4}$ in the soft inclusion limit and to be $\eta^{-1/2}$ in the hard inclusion limit. However, the authors of \cite{Ammari_2013} do not claim their rates to be optimal. Our numerical tests suggest that there is room for improvement. Nevertheless, the distance was evaluated in the $\mathrm H^1$-norm in \cite{Ammari_2013} unlike the $\mathrm L^2$-norm computed above.

\item{\bf Rectangles}
Next we consider rectangular inclusions of constant area $\pi$, centred at the origin,  with sides $a$ and $b$ taking the values $(a,b) =(\sqrt{\pi},\sqrt{\pi}),$  $(2\sqrt{\pi},1/2\sqrt{\pi}),$ $(4\sqrt{\pi},1/4\sqrt{\pi}),$ $(7\sqrt{\pi},1/7\sqrt{\pi})$.  We report the results in Table \ref{tab:rectangles:soft:static}, Figure \ref{fg:rectangles:soft:static} for the soft inclusions ($\eta\ll1$) and in Table \ref{tab:rectangles:hard:static}, Figure \ref{fg:rectangles:hard:static} for the hard inclusions ($\eta\gg1$).
One can see that the asymptotic behaviour is similar to \eqref{eq:numerics:soft-asymptote} and \eqref{eq:numerics:hard-asymptote} where the constants $\mathcal{B}(D)$ depend upon the aspect ratio of the rectangular inclusion. The larger the aspect ratio, the larger the constant $\mathcal{B}(D)$. One also notes that although the area of the rectangular inclusions is kept to $\pi$ as for elliptical inclusions, the constant $\mathcal{B}(D)$ is slightly different than that for ellipses. This can be attributed to the fact that ellipses have a smooth boundary, unlike rectangles.

\begin{table}[h!]
\centering
\begin{tabular}{|l|l|l|l|c|c|c|c|c|c|r|r|r|r|}
   \hline
   \multicolumn{4}{|l|}{ $\left\Vert \mathbf{u}^\eta - \mathbf{u}_0 \right\Vert_{\mathrm L^2(\Omega\setminus D)}$
   for rectangles of constant area $\pi=  a\times b,$~ sides $b=\frac{\pi}{a}$}  &\\
   \hline
    $a=\sqrt{\pi}$   & $a=2\sqrt{\pi}$   &  $a=4\sqrt{\pi}$   & $a=7\sqrt{\pi}$ & Parameter $\eta$ \\
   \hline  
  8.95647  & 1.815529$\times 10 ^{1}$ &  5.759912$\times 10 ^{1}$   &         1.1612953$\times 10 ^{2}$   & $\eta =1$ 
 \\
 1.94399  & 5.69206 &   3.121681$\times 10 ^{1}$   &       8.19101$\times 10 ^{1}$   & $\eta =10 ^{-1}$   
\\
    2.2168$\times 10 ^{-1}$  &  7.2523$\times 10 ^{-1}$ &   5.61652   &          2.097353$\times 10 ^{-1}$       &  $\eta =10 ^{-2}$
 \\
2.25$\times 10 ^{-2}$  &  7.456$\times 10 ^{-2}$ & 6.1359$\times 10 ^{-1}$   &    2.48643 &  $\eta =10 ^{-3}$ 
 \\
  2.26$\times 10 ^{-3}$   &  7.47$\times 10 ^{-3}$  &   6.515$\times 10 ^{-2}$  & 2.5151$\times 10 ^{-1}$   &  $\eta =10 ^{-4}$
\\
   2.36725{\; $\times 10 ^{-4}$} & 7.43681{\; $\times 10 ^{-4}$} &   1.024$\times 10 ^{-2}$   & 2.355$\times 10 ^{-2}$ &  $\eta =10 ^{-5}$ 
 \\
  1.86009{\; $\times 10 ^{-5}$}  &  7.2795{\; $\times 10 ^{-5}$} &   5.03$\times 10 ^{-3}$   & 2.51$\times 10 ^{-3}$  &  $\eta =10 ^{-6}$  
\\
\hline
\end{tabular}
\caption{Distance $\left\Vert \mathbf{u}^\eta - \mathbf{u}_0 \right\Vert_{\mathrm L^2(\Omega\setminus D)}$ versus the parameter $\eta =10^{-m}$ with $m=0,1,\dots, 6$ for soft rectangular inclusions of sides $(\sqrt{\pi},\sqrt{\pi})$, $(2\sqrt{\pi},\frac{1}{2} \sqrt{\pi})$, $(4\sqrt{\pi},\frac{1}{4}\sqrt{\pi})$, $(7\sqrt{\pi},\frac{1}{7}\sqrt{\pi})$.}
\label{tab:rectangles:soft:static}
\end{table}

\begin{table} 
\begin{tabular}{|l|l|l|l|c|c|c|c|c|c|r|r|r|r|}
\hline
 \multicolumn{4}{|l|}{ $\left\Vert \mathbf{u}^\eta - \mathbf{u}_\infty \right\Vert_{\mathrm L^2(\Omega\setminus D)}$ for rectangles of constant area $\pi=  a\times b,$ ~$b=\frac{\pi}{a}$ } &
\\
   \hline
  $a=\sqrt{\pi}$  & $a=2\sqrt{\pi}$ &$a=4\sqrt{\pi}$ & $a=7\sqrt{\pi}$ & Parameter $\eta$
\\
   \hline  
7.22888  & 7.9371  &  1.089427$\times 10 ^{1}$  & 2.457099$\times 10 ^{1}$  &$\eta =1$ 
 \\
 1.37906   &  2.58855&  8.91507   & 2.252546$\times 10 ^{1}$ & $\eta =10 $  
 \\
 1.5574$\times 10 ^{-1}$  &  4.4344$\times 10 ^{-1}$&  2.63505  & 1.211387$\times 10 ^{1}$&  $\eta =10 ^{2}$
 \\
 1.579$\times 10 ^{-2}$  & 4.802$\times 10 ^{-2}$ &   3.2571$\times 10 ^{-1}$    & 2.13537 &  $\eta =10 ^{3}$  
\\
 1.58$\times 10 ^{-3}$  & 4.85$\times 10 ^{-3}$ &   3.325$\times 10 ^{-2}$  & 2.3179$\times 10 ^{-1}$ &  $\eta =10 ^{4}$
\\
 1.59758{$\times 10 ^{-4}$}  & 4.8702{$\times 10 ^{-4}$} &  3.24$\times 10 ^{-3}$   & 2.408$\times 10 ^{-2}$&  $\eta =10 ^{5}$ 
 \\
 2.36242{\; $\times 10 ^{-5}$}  & 5.09404{$\times 10 ^{-5}$} &   2.99417{$\times 10 ^{-4}$}  &  3.2$\times 10 ^{-3}$ &  $\eta =10 ^{6}$  
 \\
   \hline
\end{tabular}
\caption{Distance $\left\Vert \mathbf{u}^\eta - \mathbf{u}_\infty \right\Vert_{\mathrm L^2(\Omega\setminus D)}$ versus the parameter $\eta =10^{-m}$ with $m=0,1,\dots, 6$ for hard rectangular inclusions of sides $(\sqrt{\pi},\sqrt{\pi})$, $(2\sqrt{\pi},\frac{1}{2} \sqrt{\pi})$, $(4\sqrt{\pi},\frac{1}{4}\sqrt{\pi})$, $(7\sqrt{\pi},\frac{1}{7}\sqrt{\pi})$.}
\label{tab:rectangles:hard:static}
\end{table}

\begin{figure}[h!]
\resizebox{140mm}{!}{\includegraphics{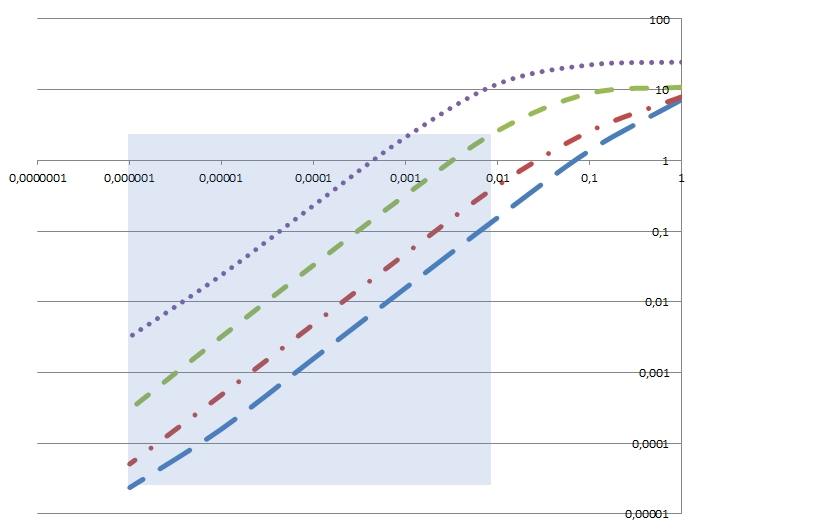}}
\caption{Curves, in Log-Log scale, of $\left\Vert \mathbf{u}^\eta -  \mathbf{u}_0 \right\Vert_{\mathrm L^2(\Omega\setminus D)}$ versus $\eta$, for $\eta =10^{-m}$ with $m=0,1,\dots,6$ in the cases where the inclusion is a rectangle of (constant) area $\pi$ and sides $(\sqrt{\pi},\sqrt{\pi})$, $(2\sqrt{\pi},\frac{1}{2} \sqrt{\pi})$, $(4\sqrt{\pi},\frac{1}{4}\sqrt{\pi})$, $(7\sqrt{\pi},\frac{1}{7}\sqrt{\pi})$. The light blue shaded zone corresponds to the fully converged numerical values.}
\label{fg:rectangles:soft:static}
\end{figure}

\begin{figure}[h!]
\resizebox{140mm}{!}{\includegraphics{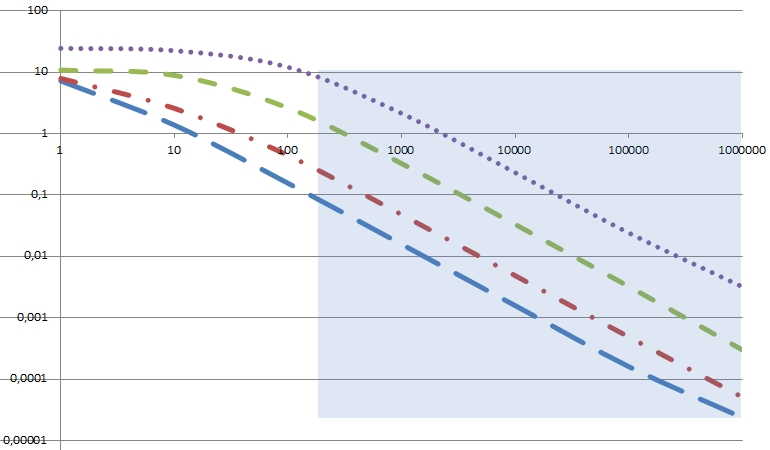}}
\caption{Curves, in $\log_{10}-\log_{10}$ scale, of $\left\Vert \mathbf{u}^\eta -  \mathbf{u}_\infty \right\Vert_{\mathrm L^2(\Omega\setminus D)}$ versus $\eta$, for $\eta =10^{m}$ with $m=0,1,\dots,6$ in the cases where the inclusion is a rectangle of (constant) area $\pi$ and sides $(\sqrt{\pi},\sqrt{\pi})$, $(2\sqrt{\pi},\frac{1}{2} \sqrt{\pi})$, $(4\sqrt{\pi},\frac{1}{4}\sqrt{\pi})$, $(7\sqrt{\pi},\frac{1}{7}\sqrt{\pi})$. The light blue shaded zone corresponds to the fully converged numerical values.}
\label{fg:rectangles:hard:static}
\end{figure}   

\subsection{Numerical tests in elastodynamics}
As opposed to the static case studied so far, we now briefly address the following time-harmonic Dirichlet elastodynamic problem at a non-zero frequency
\begin{equation}\label{eq:numerics:elasto-dynamics}
\left\{
\begin{aligned}
{\mathbf \nabla}\cdot \big(\cc^\eta: \nabla {\mathbf u}^\eta_q \big) + \rho\, \omega^2 {\mathbf u}^\eta_q & = \mathbf{0} \qquad \mbox{ in }\Omega,
\\
{\mathbf u}^\eta_q & = \mathbf{f} \qquad \mbox{ on }\partial \Omega,
\end{aligned}\right.
\end{equation}
for the unknown displacement field ${\mathbf u}^\eta_q:\Omega\to\R^d$. The fourth order elasticity tensor $\cc^\eta$ in \eqref{eq:numerics:elasto-dynamics} takes the form \eqref{eq:numerics:elasticity-tensor-eta} and $\rho, \omega$ denote the density and angular wave frequency respectively. The Dirichlet source ${\mathbf f}$ is taken to be radial and given by \eqref{eq:numerics:radial-source}. We study the soft inclusion limit, i.e., the $\eta\ll1$ regime for the elastodynamic problem \eqref{eq:numerics:elasto-dynamics}. The corresponding limit problem is as follows:
\begin{equation}\label{eq:numerics:limit-elasto-dynamics}
\left\{
\begin{aligned}
\nabla\cdot \big(\cc_0 : \nabla {\mathbf u}^0_q \big) + \rho\, \omega^2 {\mathbf u}^0_q & = \mathbf{0} & \mbox{ in }\Omega\setminus D,
\\
\frac{\partial{\mathbf u}^0_q}{\partial {\mathbf m}}\Big|_+ & = \mathbf{0} & \mbox{ on }\partial D,
\\
\mathbf{u}^0_q & = \mathbf{f} & \mbox{ on }\partial \Omega.
\end{aligned}\right.
\end{equation}
We have numerically studied the distance between the solution ${\mathbf u}^\eta_q$ to \eqref{eq:numerics:elasto-dynamics} and the solution ${\mathbf u}^0_q$ to \eqref{eq:numerics:limit-elasto-dynamics} in the $\mathrm L^2(\Omega\setminus D)$-norm.  We take the density to be $\rho_0=7.85 \times 10 ^{3} kg. m^{-3}$ in the background (steel) and $\rho_1 = 2.7 \times 10 ^{3} kg. m^{-3}$ in the inclusion (aluminium). The angular wave frequency $\omega$ is taken to be $0.1$ {rad.s$ ^{-1}$}. As in the elastostatics case, we have done these numerical experiments on elliptic inclusions with different eccentricities (while keeping the area fixed and equal to $\pi$) and on rectangular inclusions with different aspect ratios (again, while keeping the area fixed and equal to $\pi$). The results are tabulated in Table \ref{tab:ellipses:soft:dynamic} and Table \ref{tab:rectangles:soft:dynamic}. Clearly, the difference between the static and quasi-static cases is rather small (changes appear only in the third or fourth digit of decimal numbers).

\begin{table}
\begin{tabular}{|l|l|l|l|c|c|c|c|c|c|r|r|r|r|}
   \hline
  \multicolumn{4}{|l|}{$\Vert {\mathbf u^0_q}-  {\mathbf u^\eta_q}\Vert_{\mathrm L^2(\Omega\setminus D)}$  for ellipses of constant area $\pi=  a\times b\times \pi$}  & {$\omega=0.1$ rad.s$^{-1}$}\\
   \hline
 $a=1$  & $a=2$ & $a=4$ & $a=7$ & Parameter \\
   \hline  
 6.94118  & 1.917659$\times 10 ^{1}$ &  6.615783$\times 10 ^{1}$   & 1.2855079$\times 10 ^{2}$ &$\eta =1$  \\
  1.25521  & 6.11476 &  3.717121$\times 10 ^{1}$   &9.22415$\times 10 ^{1}$ & $\eta =10 ^{-1}$   \\
1.3656$\times 10 ^{-1}$   & 7.805$\times 10 ^{-1}$ & 6.8955   & 2.410452$\times 10 ^{1}$ &  $\eta =10 ^{-2}$ \\
1.378$\times 10 ^{-2}$ &8.022$\times 10 ^{-2}$ &   7.5401$\times 10 ^{-1}$  & 2.87416 &  $\eta =10 ^{-3}$  \\
1.38$\times 10 ^{-3}$  & 8$\times 10 ^{-3}$ &   7.611$\times 10 ^{-2}$   & 2.9305$\times 10 ^{-1}$ &  $\eta =10 ^{-4}$\\
1.38101{$\times 10 ^{-4}$}   & 7.57457{$\times 10 ^{-4}$}  &   7.61$\times 10 ^{-3}$   & 2.936$\times 10 ^{-2}$ &  $\eta =10 ^{-5}$  \\
\hline
\end{tabular}
\caption{Distance $\left\Vert \mathbf{u}^\eta_q - \mathbf{u}^0_q \right\Vert_{\mathrm L^2(\Omega\setminus D)}$ versus the parameter $\eta =10^{-m}$  $m=0,1,\cdots, 5,$  for soft elliptical inclusions of various semi-axes $a$, $b=\frac{1}{a}$ and constant area $\pi,$  at finite frequency $0.1$ rad.s$ ^{-1}$.}
 \label{tab:ellipses:soft:dynamic}
\end{table}

\begin{table}
\begin{tabular}{|l|l|l|l|c|c|c|c|c|c|r|r|r|r|}
   \hline
   \multicolumn{4}{|l|}{$\Vert {\mathbf u^0_q}-  {\mathbf u^\eta_q}\Vert_{\mathrm L^2(\Omega\setminus D)}$ for rectangles of constant area $\pi=  a\times b$} & $\omega=0.1$ rad.s$^{-1}$  \\
   \hline
  $a=\sqrt{\pi}$   & $a=2\sqrt{\pi}$   &  $a=4\sqrt{\pi}$   & $a=7\sqrt{\pi}$ & Parameter\\
   \hline  
 8.95647  & 1.816556$\times 10 ^{1}$ &  5.760182$\times 10 ^{1}$   &            1.1612956$\times 10 ^{2}$                     &$\eta =1$  \\
1.94707  & 5.69863 &   3.121703$\times 10 ^{1}$   &        8.191013$\times 10 ^{1}$           & $\eta =10 ^{-1}$   \\
 2.221$\times 10 ^{-1}$  &  7.2838$\times 10 ^{-1}$ &   5.611   &            2.097354$\times 10 ^{1}$         &  $\eta =10 ^{-2}$ \\
 2.253$\times 10 ^{-2}$  & 7.72$\times 10 ^{-2}$ &  6.0662$\times 10 ^{-1}$   &  2.48643 &  $\eta =10 ^{-3}$  \\
 2.25$\times 10 ^{-3}$  &  1.028$\times 10 ^{-2}$  &   5.792$\times 10 ^{-2}$   &   2.5151$\times 10 ^{-1}$ &  $\eta =10 ^{-4}$\\
 2.20882{$\times 10 ^{-4}$}  &  3.63$\times 10 ^{-3}$  &   3.43$\times 10 ^{-3}$   & 2.356$\times 10 ^{-2}$ &  $\eta =10 ^{-5}$  \\
   \hline
\end{tabular}
\caption{ Distance $\left\Vert \mathbf{u}^\eta_q - \mathbf{u}^0_q \right\Vert_{\mathrm L^2(\Omega\setminus D)}$ versus the parameter $\varepsilon =10^{-m}$  $m=0,1,\cdots, 5,$  for  soft  rectangular inclusions of various aspect ratios and constant area $\pi,$  at finite frequency $0.1$ rad.s$ ^{-1}$.}
\label{tab:rectangles:soft:dynamic}
\end{table}

\section{Concluding remarks and further discussions}

We close with a discussion of the array of open problems and extensions that arise: 
Our near-cloaking strategy carries through to anisotropic backgrounds with anisotropic inclusions, and this generalisation would be worthwhile, but more importantly our numerical experiments on the transmission problem (see Tables \ref{tab:ellipses:hard:static} and \ref{tab:rectangles:hard:static}) suggest sharper estimates in Theorem \ref{thm:extreme-infty} should hold; we encourage other analysts to explore the possibility of improving the estimates for elasticity equations with high contrast parameters. 

The problem of near-cloaking for the fully elastodynamic case (i.e. time dependent) is largely open. The extension of our work to the time domain could use similar tools to \cite{craster2017}, although the case of the time-dependent heat propagation equation uses results on the long time behaviour of solutions to parabolic problems, which do not translate straightforwardly to hyperbolic wave equations. To the best of our knowledge, there are no direct time-domain analysis of near cloaking results for wave equations (if the source oscillates periodically, then Fourier transform methods can yield near cloaking results in the time domain -- Nguyen et al. \cite{Nguyen_2012} employed a similar strategy for the scalar wave equation, albeit with lossy layers). Again, our results should encourage further work in this area. Another challenging open problem, of practical importance for physicists and engineers, is that of approximate near-cloaking, that is, designing a structured medium with effective properties achieving either the required asymmetry for elasticity tensor of the Cosserat cloak, or the fully symmetric rank-4, 3 and 2 tensors appearing in the Willis equation. In the former, as the transformed elasticity tensor does not have the minor symmetries, this means that we depart from the classical results of homogenization in elasticity \cite{Camar-Eddine_2003} and in the latter we may draw inspiration from the work of Milton and Willis \cite{Milton_2007} or the work of Shuvalov et al. \cite{shuvalov_2011}. We propose an approximate cloak design for the Cosserat cloak, based on an alternation of isotropic layers mimicking a symmetrized version of the elasticity tensor. Another issue of 
practical importance in earthquake engineering is to study the link between cloaking efficiency and the level of wave protection for an object surrounded by the cloak, as touched upon in \cite{diatta-guenneau-sphericalcloak, diatta-guenneau-Physics-D-2016} or to extend results to domains with irregular boundaries (e.g. sharp corners), as  discussed in \cite{Bigoni-polycloak} for antiplane shear waves and in \cite{Diattapolycloaks} for Electromagnetism.
 Clearly,  much remains to be investigated in elastic cloaking, near cloaking and approximate near cloaking.

\section*{ Acknowledgement}
A.D. and S.G. acknowledge European funding through the ERC Starting Grant ANAMORPHISM. H.H. and R.V.C. acknowledge the support of the EPSRC programme grant ``Mathematical fundamentals of Metamaterials for multiscale Physics and Mechanics'' (EP/L024926/1). R.V.C. also thanks the Leverhulme Trust for their support. S.G. acknowledges support from EPSRC as a named collaborator on grant EP/L024926/1.

\end{document}